\def\authorsPS{}
\newcommand\authPS[8]{\ifnum\authPSc<1\def\authPSc{1}\else, \fi{\large\sffamily #1 #2}%
\edef\authorsPS{\authorsPS \par\vskip 1mm\noindent #1 #2:\hskip 5mm #3, #4, #5, #6, #7, #8}}
\newtheorem{theorem}{Theorem}[section]
\newtheorem{lemma}[theorem]{Lemma}
\newtheorem{corollary}[theorem]{Corollary}
\newtheorem{proposition}[theorem]{Proposition}
\theoremstyle{definition}
\newtheorem{definition}[theorem]{Definition}
\theoremstyle{remark}
\newtheorem{remark}[theorem]{Remark}
\def\authPSc{0}
\newcommand{\beq}[1]{
\begin{equation}\label{#1}}
\newcommand{\eeq}{\end{equation}}
\newcommand{\req}[1]{{\rm(\ref{#1})}}
\newcommand{\hten}{{\mathfrak H}}
\newcommand{\law}{\stackrel{\cal L}{\longrightarrow}}
\newcommand{\Nt}{\lfloor nt \rfloor}
\newcommand{\Xhat}{\hat{X}_{\frac jn}}
\begin{document}


\title{Weak convergence of the Stratonovich integral with respect to a class of Gaussian processes}

\author{Daniel Harnett, David Nualart\thanks{
D. Nualart is supported by the NSF grant DMS0904538. \newline
  Keywords:  It\^o formula, Skorohod integral, Malliavin calculus, fractional Brownian motion.
  }   \\
  Department of Mathematics\thinspace ,\ University of Kansas\\
405 Snow Hall\thinspace ,\ Lawrence, Kansas 66045-2142 }
\maketitle

\begin{abstract}
For a Gaussian process $X$ and smooth function $f$, we consider a Stratonovich integral of $f(X)$, defined as the weak limit, if it exists, of a sequence of Riemann sums.  We give covariance conditions on $X$ such that the sequence converges in law.  This gives a change-of-variable formula in law with a correction term which is an It\^o integral of $f'''$ with respect to a Gaussian martingale independent of $X$. The proof uses Malliavin calculus and a central limit theorem from \cite{NoNu}. This formula was known for fBm with $H=1/6$  \cite{NoRevSwan}.  We extend this to a larger class of Gaussian processes.      
\end{abstract}

\section{Introduction}
Let $X = \{X_t, t\ge 0\}$ be a centered Gaussian process, and let $f:{\mathbb R}\to{\mathbb R}$ be a ${\cal C}^\infty$ function such that $f$ and its derivatives have at most polynomial growth.  We define the Stratonovich integral of $f'$ with respect to $X$, denoted,
$$\int_0^t f'(X_s) d^\circ X_s,$$
as the limit in probability of the trapezoidal Riemann sum,
\beq{Rsum}\frac 12 \sum_{i=0}^{\Nt -1} \left[f'(X_{\frac{i+1}{n}}) + f'(X_{\frac{i}{n}})\right]\left( X_{\frac{i+1}{n}} - X_{\frac{i}{n}}\right)\eeq
when that limit exists.
 In a 2005 paper, Gradinaru {\em et al.} (\cite{GNRV}) studied this integral, and identified conditions on $X$ under which the Riemann sum converges.  In particular, the sum converges for any fractional Brownian motion (fBm) $\{B_t, t\ge 0\}$ with Hurst parameter $H > 1/6$, in which case the following change-of-variable formula holds:
\beq{gnrv} f(B_t) = f(0) + \int_0^t f'(B_s) d^\circ B_s.\eeq
Subsequently, \cite{NoRevSwan} examined the end point case $H=1/6$.  Here, it was proved that \req{Rsum} converges weakly to an It\^o-like expansion formula, consisting of a stochastic integral and a correction term in the form of an It\^o integral of $f'''$.  In this case, we have the weak change-of-variable formula,
\beq{nrs} f(B_t) \stackrel{\cal L}{=} f(0) + \int_0^t f'(B_s) d^\circ B_s - \frac{\sqrt{6}}{12}\int_0^t f^{(3)}(X_s)dW_s,\eeq
where $W$ is a Brownian motion, independent of $B$, with variance given by \req{eta_def}.

In this paper, we consider the behavior of \req{Rsum} for a more general class of Gaussian processes, which are characterized by conditions on the covariance.  Convergence follows from a central limit theorem first proved by Nourdin and Nualart in \cite{NoNu}.  This theorem is based on Malliavin calculus, and applies to a sequence of multiple Skorohod integrals.  In this paper, we give a set of six covariance conditions on the process $X$, which lead to weak convergence of the form \req{nrs}.  These conditions are satisified by fBm with $H = 1/6$.  As an application, we have found that the conditions are met for three fBm-derived processes, including
\begin{itemize}
\item  Bifractional Brownian motion (bBm) with parameters $HK = 1/6$; 
\item  `extended' bBm with $K \in (1,2)$ and $HK = 1/6$; and
\item  sub-fractional Brownian motion with parameter $h = 1/3$.
\end{itemize}

In the prequel to this paper (\cite{HaNu}), we applied the same central limit theorem to a `midpoint' Riemann sum of the form
\[\sum_{j=1}^{\left\lfloor \frac{nt}{2}\right\rfloor} f'(X_{\frac{2j-1}{n}}) (X_{\frac{2j}{n}} - X_{\frac{2j-2}{n}} ).\]
For this sum, we found a slightly different weak change-of-variable formula for a process that acts similar to fBm with $H=1/4$.  In that case, the sum converges weakly to a stochastic integral plus a correction term in the form of an It\^o integral of $f''$, namely
\[f(X_t) \stackrel{\cal L}{=} f(X_0) + \int_0^t f'(X_s)^\circ dX_s +\frac 12\int_0^t f''(X_s)dB_s,\]
where $B$ is a scaled Brownian motion, independent of $X$, with a given variance.  As is suggested in \cite{GNRV}, there are other forms of Riemann sums that can be tried, including one for any fBm with $H > 1/10$.  We expect that our theoretical tools could be applied to the $H=1/10$ case as well, but this is not pursued in the present paper.

A brief outline of the paper is as follows.  In Section 2, we give some background on Malliavin calculus, and the main analytical tools that will be used in this paper.  We also recall the central limit theorem (proved in \cite{HaNu}) that provides the main theoretical basis for our result.  In Section 3, we identify the covariance conditions on the process $X$ for the CLT to hold, and prove \req{nrs}.  The proof essentially consists of restating the Riemann sum \req{Rsum} as a sequence of terms dominated in probability by 3-fold Skorohod integrals; then verifying that the CLT conditions are met.  Section 4 discusses the three examples listed above, and demonstrates the procedure for verifying the covariance conditions.  In Section 5 we give proofs for three of the longer lemmas from Section 3.

The main inspirations for this paper were \cite{NoNu} and \cite{NoRevSwan}.  Most of the notation follows that in \cite{NoNu} and \cite{HaNu}.

\section{Preliminaries and notation}
Let $X = \{X(t) , t \ge 0\}$ be a centered Gaussian process defined on a probability space $( \Omega, {\cal F}, P )$ with continuous covariance function 
$${\mathbb E}[X(t) X(s) ] = R(t,s),\;\; s,t \ge 0.$$
We will always assume that ${\cal F}$ is the $\sigma-$algebra generated by $X$.  Let $\cal E$ denote the set of step functions on $[0, T]$ for $T >0$; and let $\hten$ be the Hilbert space defined as the closure of $\cal E$ with respect to the scalar product
$$\left< {\mathbf 1}_{[0,t]} , {\mathbf 1}_{[0,s]} \right>_\hten = R(t,s),\;\; s,t \ge 0.$$  The mapping ${\mathbf 1}_{[0,t]} \mapsto X(t)$ can be extended to a linear isometry between $\hten$ and the Gaussian space spanned by $W$.  We denote this isometry by $h \mapsto X(h)$.  In this way, $\{ X(h), h \in \hten\}$ is an isonormal Gaussian process.  For integers $q \ge 1$, let $\hten^{\otimes q}$ denote the $q^{th}$ tensor product of $\hten$.  We use $\hten^{\odot q}$ to denote the symmetric tensor product.

For integers $q \ge 1$, let ${\cal H}_q$ be the $q^{th}$ Wiener chaos of $X$, that is, the closed linear subspace of $L^2(\Omega)$ generated by the random variables $\{ H_q(X(h)), h \in \hten, \|h \|_\hten = 1 \}$, where $H_q(x)$ is the $q^{th}$ Hermite polynomial, defined as
$$H_q (x) = {(-1)^q}e^{\frac{x^2}{2}}\frac{d^q}{dx^q}e^{-\frac{x^2}{2}}.$$  In particular, $H_1(x) =x$, $H_2(x) = x^2-1$, and $H_3(x) = x^3 - 3x$.  For $q \ge 1$, it is known that the map 
\beq{Hmap} I_q(h^{\otimes q}) = H_q(X(h))\eeq
provides an isometry between the symmetric product space $\hten^{\odot q}$ (equipped with the modified norm $\frac{1}{\sqrt{q!}}\| \cdot\|_{\hten^{\otimes q}}$) and ${\cal H}_q$.  By convention, ${\cal H}_0 = \mathbb{R}$ and $I_0(x) = x$.

\subsection{Elements of Malliavin Calculus}
Following is a brief description of some identities that will be used in the paper.  The reader may refer to \cite{NoNu} for a brief survey, or to \cite{Nualart} for detailed coverage of this topic.  Let $\cal S$ be the set of all smooth and cylindrical random variables of the form $F = g(X(\phi_1), \dots, X(\phi_n))$, where $n \ge 1$; $g: {\mathbb R}^n \to {\mathbb R}$ is an infinitely differentiable function with compact support, and $\phi_i \in \hten$. The Malliavin derivative of $F$ with respect to $X$ is the element of $L^2(\Omega, \hten)$ defined as
$$DF = \sum_{i=1}^n \frac{\partial g}{\partial x_i}(X(\phi_1), \dots, X(\phi_n)) \phi_i.$$
In particular, $DX(h) = h$.  By iteration, for any integer $q >1$ we can define the $q^{th}$ derivative $D^qF$, which is an element of $L^2(\Omega, \hten^{\odot q})$.

For any integer $q \ge 1$ and real number $p \ge 1$, let ${\mathbb D}^{q,p}$ denote the closure of $\cal S$ with respect to the norm $\| \cdot \|_{{\mathbb D}^{q,p}}$ defined as 
$$\| F \|_{{\mathbb D}^{q,p}}^p = {\mathbb E}\left[ |F|^p\right] + \sum_{i=1}^q {\mathbb E}\left[ \| D^iF \|_{\hten^{\otimes i}}^p \right].$$
We denote by $\delta$ the Skorohod integral, which is defined as the adjoint of the operator $D$.  This operator is also referred to as the divergence operator in \cite{Nualart}.  A random element $u \in L^2(\Omega, \hten)$ belongs to the domain of $\delta$, Dom $\delta$, if and only if,
$$\left| {\mathbb E}\left[ \left< DF, u\right>_\hten\right] \right| \le c_u \sqrt{E[F^2]}$$
for any $F \in {\mathbb D}^{1,2}$, where $c_u$ is a constant which depends only on $u$.  If $u \in $ Dom $\delta$, then the random variable $\delta (u) \in L^2(\Omega)$ is defined for all $F \in {\mathbb D}^{1,2}$ by the duality relationship,
$${\mathbb E}\left[ F\delta(u) \right] = {\mathbb E}\left[ \left< DF, u \right>_\hten \right].$$
This is sometimes called the Malliavin integration by parts formula.  We iteratively define the multiple Skorohod integral for $q \ge 1$ as $\delta (\delta^{q-1}(u))$, with $\delta^0(u) = u$.  For this definition we have,
\beq{Duality}{\mathbb E}\left[ F\delta^q(u) \right] = {\mathbb E}\left[ \left< D^qF, u \right>_{\hten^{\otimes q}} \right],\eeq
where $u \in$ Dom $\delta^q$ and $F \in {\mathbb D}^{q,2}$.  Moreover, if $h \in \hten^{\odot q}$, then we have $\delta^q (h) = I_q(h)$.

For $f \in \hten^{\odot p}$ and $g\in \hten^{\odot q}$, the following integral multiplication formula holds:
\beq{intmult} \delta^p (f) \delta^q(g) = \sum_{r=0}^{p\wedge q} r! \binom{p}{r}\binom qr \delta^{p+q-2r}(f \otimes_r g),\eeq
where $\otimes_r$ is the contraction operator (see, {\em e.g.,} \cite{Nualart}, Sec. 1.1).  

We will use the Meyer inequality for the Skorohod integral, (see, for example Prop. 1.5.7 of \cite{Nualart}).  Let ${\mathbb D}^{k,p}(\hten^{\otimes k})$ denote the corresponding Sobolev space of $\hten^{\otimes k}$-valued random variables.  Then for $p \ge 1$ and integers $k \ge q \ge 1$, we have,
\beq{Meyer}\| \delta^q(u) \|_{{\mathbb D}^{k-q, p}} \le c_{k,p} \| u \|_{{{\mathbb D}^{k,p}}(\hten^{\otimes q})}\eeq
for all $u \in {\mathbb D}^{k,p}(\hten^{\otimes k})$ and some constant $c_{k,p}$.

The following three results are well known, and will be used extensively in this paper.  The reader may refer to \cite{NoNu} and \cite{Nualart} for details.
\begin{lemma}  Let $q \ge 1$ be an integer.

\begin{enumerate}[(a)]
\item Assume $F \in {\mathbb D}^{q,2}$, $u$ is a symmetric element of {\em Dom}  $\delta^q$, and $\left< D^rF, \delta^j (u) \right>_{\hten^{\otimes r}} \in L^2(\Omega, \hten^{\otimes q-r-j})$ for all $0 \le r+j \le q.$  Then $\left< D^rF, u \right>_{\hten^{\otimes r}} \in \,${\em Dom }$\delta^r$ and
$$ F \delta^q(u) = \sum_{r=0}^q \binom{q}{r} \delta^{q-r}\left(\left< D^rF, u \right>_{\hten^{\otimes r}}\right).$$

\item  Suppose that $u$ is a symmetric element of ${\mathbb D}^{j+k,2}(\hten^{\otimes j})$.  Then we have,
$$D^k \delta^j (u) = \sum_{i=0}^{j \wedge k} \binom{k}{i} \binom{j}{i} i! \delta^{j-i}\left(D^{k-i}u\right).$$

\item  Let $u, v$ be symmetric functions in ${\mathbb D}^{2q, 2}(\hten^{\otimes q})$.  Then
$${\mathbb E}[\delta^q(u) \delta^q(v) ] = \sum_{i=0}^q \binom{q}{i}^2 {\mathbb E}\left[ \left< D^{q-i}u, D^{q-i}v\right>_{\hten^{\otimes(2q-i)}}\right].$$
In particular,
$$\left\| \delta^q (u) \right\|^2_{L^2(\Omega)} = {\mathbb E}\left[ \delta^q(u)^2\right] \le
\sum_{i=0}^q \binom{q}{i}^2 {\mathbb E}\left[ \left\| D^{q-i}u\right\|_{\hten^{\otimes(2q-i)}}^2\right].$$ 
\end{enumerate}\end{lemma}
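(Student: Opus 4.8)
The plan is to derive all three identities from a single engine: the duality relation \req{Duality}, the fact that $\delta$ is the adjoint of $D$, the Leibniz rule for the iterated derivative $D^q$, and the elementary commutation relation $D\delta(w)=w+\delta(Dw)$, combined with induction on the order of the integral. Throughout I would first establish each formula for a dense, well-behaved class of integrands (e.g. $u$ of cylindrical form, for which every term is manifestly square-integrable), and then remove the restriction by approximation, using the Meyer inequality \req{Meyer} to control the Sobolev norms of the Skorohod integrals that appear. The domain and integrability hypotheses in each part are exactly what is needed to guarantee that the limiting objects lie in the relevant domains and that the passage to the limit is legitimate.

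For part (a) I would argue by duality against an arbitrary smooth test variable $G\in{\cal S}$. By \req{Duality}, ${\mathbb E}[G\,F\delta^q(u)]={\mathbb E}[\langle D^q(GF),u\rangle_{\hten^{\otimes q}}]$. Expanding $D^q(GF)$ by the Leibniz rule and pairing against the symmetric tensor $u$ collapses the result to
\[
\langle D^q(GF),u\rangle_{\hten^{\otimes q}}
=\sum_{r=0}^q\binom qr\big\langle D^rG,\ \langle D^{q-r}F,u\rangle_{\hten^{\otimes(q-r)}}\big\rangle_{\hten^{\otimes r}},
\]
where symmetry of $u$ is what lets us freely assign tensor slots. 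Applying \req{Duality} once more to each summand (now with $\delta^r$ acting on $\langle D^{q-r}F,u\rangle_{\hten^{\otimes(q-r)}}\in\hten^{\otimes r}$) turns the $r$-th term into ${\mathbb E}[G\,\delta^r(\langle D^{q-r}F,u\rangle_{\hten^{\otimes(q-r)}})]$. Re-indexing $r\mapsto q-r$ and using that $G$ is arbitrary yields the claim; the hypothesis $\langle D^rF,\delta^j(u)\rangle\in L^2(\Omega,\hten^{\otimes q-r-j})$ is precisely what guarantees each $\langle D^{q-r}F,u\rangle$ lies in $\mathrm{Dom}\,\delta^r$.

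For part (b) I would build up from the basic commutation relation $D\delta(w)=w+\delta(Dw)$, itself a one-line consequence of duality. A first induction on $j$ (with $k=1$) gives $D\delta^j(u)=\delta^j(Du)+j\,\delta^{j-1}(u)$. Then I would induct on $k$: applying $D$ to the order-$k$ formula and replacing each $D\delta^{j-i}(D^{k-i}u)$ by $\delta^{j-i}(D^{k-i+1}u)+(j-i)\delta^{j-i-1}(D^{k-i}u)$ produces the order-$(k+1)$ formula once the two contributions to each term $\delta^{j-i}(D^{k+1-i}u)$ are combined. The coefficient of that term works out to $\binom ki\binom ji\,i!+\binom{k}{i-1}\binom{j}{i-1}(i-1)!\,(j-i+1)$, and the identities $(j-i+1)\binom{j}{i-1}=i\binom ji$ together with Pascal's rule $\binom ki+\binom{k}{i-1}=\binom{k+1}{i}$ collapse this to $\binom{k+1}{i}\binom ji\,i!$. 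This combinatorial bookkeeping, along with keeping track of which tensor slots $\delta$ acts on, is the step I expect to be the main obstacle in the whole lemma.

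Finally, part (c) follows by combining the first two. Treating $\delta^q(u)$ as the test variable in \req{Duality} gives ${\mathbb E}[\delta^q(u)\delta^q(v)]={\mathbb E}[\langle D^q\delta^q(u),v\rangle_{\hten^{\otimes q}}]$. Substituting the expansion of $D^q\delta^q(u)$ from part (b) and then moving each factor $\delta^{q-i}$ off the first argument and onto $v$ — via the $\hten^{\otimes q}$-valued version of \req{Duality}, which one verifies componentwise in an orthonormal basis of $\hten$ — replaces each $\langle\delta^{q-i}(D^{q-i}u),v\rangle$ by $\langle D^{q-i}u,D^{q-i}v\rangle_{\hten^{\otimes(2q-i)}}$; collecting the coefficients produced by part (b) gives the asserted identity. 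The ``in particular'' bound is then immediate, by taking $v=u$ and applying the Cauchy--Schwarz inequality to each contraction. As in the other parts, the only delicate points are the combinatorial constants and the square-integrability of every intermediate term, for which Meyer's inequality \req{Meyer} together with the density argument of the first paragraph suffice.
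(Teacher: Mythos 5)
First, note that the paper itself offers no proof of this lemma: it is quoted as ``well known'' with pointers to \cite{NoNu} and \cite{Nualart}, so there is no in-paper argument to compare against. Your proposal reconstructs essentially the standard proofs from those references: part (b) by induction on $k$ starting from the commutation relation $D\delta(w)=w+\delta(Dw)$ (your coefficient bookkeeping via $(j-i+1)\binom{j}{i-1}=i\binom{j}{i}$ and Pascal's rule is correct), and part (c) by pairing the expansion of $D^q\delta^q(u)$ against $v$ through the duality \req{Duality}. Two points, however, deserve attention.

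In part (a), the one-shot duality argument is circular on the domain question: to pass from ${\mathbb E}[\langle D^q(GF),u\rangle_{\hten^{\otimes q}}]=\sum_r\binom{q}{r}{\mathbb E}[\langle D^rG,\langle D^{q-r}F,u\rangle\rangle_{\hten^{\otimes r}}]$ to $\sum_r\binom{q}{r}{\mathbb E}[G\,\delta^r(\langle D^{q-r}F,u\rangle)]$ you must already know that each $\langle D^{q-r}F,u\rangle$ lies in $\mathrm{Dom}\,\delta^r$, which is part of the conclusion, and the duality criterion for domain membership cannot be applied to one summand in isolation. The standard repair is exactly what you gesture at in your preamble: either induct on $q$, reducing to the case $q=1$ where $F\delta(u)=\delta(Fu)+\langle DF,u\rangle_\hten$, or prove the identity for cylindrical $F$ and $u$ and pass to the limit, using the hypothesis $\langle D^rF,\delta^j(u)\rangle\in L^2(\Omega,\hten^{\otimes q-r-j})$ together with the closedness of $\delta^r$ to obtain domain membership of the limits; as written, the sketch for (a) skips this step. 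Second, in part (c) your method does not in fact yield ``the asserted identity'': part (b) with $j=k=q$ produces the coefficient $\binom{q}{i}^2\,i!$, so your argument gives ${\mathbb E}[\delta^q(u)\delta^q(v)]=\sum_{i=0}^q\binom{q}{i}^2\,i!\,{\mathbb E}[\langle D^{q-i}u,D^{q-i}v\rangle_{\hten^{\otimes(2q-i)}}]$. This is the correct formula --- test it with $u=v=h^{\otimes q}$ deterministic and $\|h\|_\hten=1$, for which the left side is ${\mathbb E}[H_q(X(h))^2]=q!$ while the right side without the $i!$ would equal $1$ --- so the factor $i!$ is missing from the statement as printed. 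Since the paper only ever uses part (c) as an upper bound up to multiplicative constants, the discrepancy is harmless downstream, but you should not claim your coefficients collapse to the printed formula when they provably do not.
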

 
\bigskip

\subsection{Criteria for convergence in the Skorohod space ${\mathbf D}[0,\infty)$}
\begin{definition} 
Assume $F_n$ is a sequence of $d-$dimensional random variables defined on a probability space $(\Omega, {\cal F}, P)$, and $F$ is a $d-$dimensional random variable defined on $(\Omega, {\cal G}, P)$, where ${\cal F} \subset {\cal G}$.  We say that $F_n$ {\em converges stably} to $F$ as $n \to \infty$, if, for any continuous and bounded function $f:\,{\mathbb R}^d \to {\mathbb R}$ and ${\mathbb R}$-valued, ${\cal F}-$measurable random variable $Z$, we have
$$\lim_{n\to \infty} {\mathbb E} \left(f(F_n) Z\right) = {\mathbb E}\left(f(F) Z\right).$$
\end{definition}

The following central limit theorem first appeared in \cite{NoNu}, and the present multi-dimensional version was proved in \cite{HaNu}.  This will be the main theoretical tool of the paper.
\begin{theorem} 
Let $q \ge 1$ be an integer, and suppose that $F_n$ is a sequence of random variables in $\mathbb{R}^d$ of the form $F_n = \delta^q (u_n) = \left( \delta^q (u_n^1), \dots, \delta^q(u_n^d) \right)$, for a sequence of $\mathbb{R}^d-$valued symmetric functions $u_n$ in $\mathbb{D}^{2q, 2q}(\hten^{\otimes q})$.  Suppose that the sequence $F_n$ is bounded in $L^1(\Omega, \hten)$ and that:
\begin{enumerate}[(a)]
\item  $\left< u_n^j, \bigotimes_{\ell =1}^m (D^{a_\ell}F_n^{j_\ell}) \otimes h \right>_{\hten^{\otimes q}}$ converges to zero in $L^1(\Omega)$ for all  integers $1 \le j, j_\ell \le d$, all integers $1 \le a_1, \dots, a_m, r \le q-1$  such that $a_1 + \cdots + a_m + r = q$; and all $h \in \hten^{\otimes r}$.
\item  For each $1 \le i,j \le d$, $\left< u_n^i, D^qF_n^j\right>_{\hten^{\otimes q}}$ converges in $L^1(\Omega, \hten)$ to a random variable $s_{ij}$, such that the matrix $\Sigma := \left( s_{ij} \right)_{d \times d}$ is nonnegative definite (that is, $\lambda^T \Sigma \lambda \ge 0$ for all nonzero $\lambda \in {\mathbb R}^d$).
\end{enumerate}
Then $F_n$ converges stably to a random variable in ${\mathbb R}^d$ with conditional Gaussian law ${\cal N} (0, \Sigma)$ given $X$. \end{theorem}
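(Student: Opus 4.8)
The plan is to establish the asserted stable convergence through convergence of conditional characteristic functions. Since the limit is conditionally Gaussian given $X$ with $\mathcal F$-measurable covariance $\Sigma$, so that $\mathbb E[e^{i\langle\lambda,F\rangle}\mid\mathcal F]=e^{-\frac12\lambda^T\Sigma\lambda}$, the claimed convergence is equivalent to
\[\lim_{n\to\infty}\mathbb E\big[Z\,e^{i\langle\lambda,F_n\rangle}\big]=\mathbb E\big[Z\,e^{-\frac12\lambda^T\Sigma\lambda}\big]\]
for every $\lambda\in\mathbb R^d$ and every $Z$ in a determining family of bounded $\mathcal F$-measurable random variables; I would take $Z=e^{iX(h)}$, $h\in\hten$, whose linear span is dense in $L^2(\Omega,\mathcal F)$. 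Fixing $\lambda$ and $Z$, the idea is to introduce the interpolating function $\Phi_n(t)=\mathbb E[Z\,e^{it\langle\lambda,F_n\rangle}]$ and to show that it converges, uniformly for $t$ in compact sets, to the solution of a Gaussian ordinary differential equation with the prescribed (random) variance.

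First I would differentiate $\Phi_n$ in $t$ and use that $\langle\lambda,F_n\rangle=\sum_{j=1}^d\lambda_j\delta^q(u_n^j)$, so that the duality formula \req{Duality} moves each $\delta^q$ onto the smooth functional $G_t:=Z\,e^{it\langle\lambda,F_n\rangle}$:
\[\Phi_n'(t)=i\sum_{j=1}^d\lambda_j\,\mathbb E\big[\big\langle u_n^j,\,D^q G_t\big\rangle_{\hten^{\otimes q}}\big].\]
Next I would expand $D^qG_t$ by the Leibniz rule, distributing the $q$ derivatives between $Z$ and the exponential and then applying the Fa\`a di Bruno formula to $D^{q-r}e^{it\langle\lambda,F_n\rangle}$. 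This produces one \emph{principal} term, in which all $q$ derivatives act on the exponent as a single block, namely $(it)\,D^q\langle\lambda,F_n\rangle\,e^{it\langle\lambda,F_n\rangle}$, together with remainder terms indexed by the number $r$ of derivatives falling on $Z$ and by the block sizes $a_1,\dots,a_m$ of the partition of the remaining indices.

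The principal term, after contracting against $u_n^j$, equals $i\cdot(it)\sum_{i,j}\lambda_i\lambda_j\langle u_n^j,D^qF_n^i\rangle\,G_t$; by condition (b), $\langle u_n^j,D^qF_n^i\rangle\to s_{ij}$ in $L^1$, and since $|G_t|\le1$ this contributes $-t\sum_{i,j}\lambda_i\lambda_j\,\mathbb E[Z\,s_{ij}\,e^{it\langle\lambda,F_n\rangle}]+o(1)$, uniformly for $t$ in compacts. Every remainder term is, after writing $D^rZ=(ih)^{\otimes r}Z$ and $D^{a_\ell}\langle\lambda,F_n\rangle=\sum_k\lambda_kD^{a_\ell}F_n^k$, a finite linear combination of the contractions appearing in condition (a), multiplied by bounded factors. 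The terms in which at least one derivative falls on $Z$ (so $r\ge1$) are controlled directly by condition (a), with the deterministic tensor $(ih)^{\otimes r}$ playing the role of $h$. The genuinely delicate remainder terms are the \emph{internal} ones, in which no derivative hits $Z$ but the $q$ derivatives split into $m\ge2$ blocks; here condition (a) only furnishes convergence to zero when tested against a fixed deterministic tensor, and I would upgrade this to the norm convergence needed to pair against the random tensors $D^{a_\ell}F_n^k$ by invoking the uniform Sobolev bounds supplied by Meyer's inequality \req{Meyer} together with the $L^1$-boundedness of $F_n$. I expect this upgrade to be the main obstacle of the proof.

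Combining these estimates gives $\Phi_n'(t)=-t\,\mathbb E[Z\,(\lambda^T\Sigma_n\lambda)\,e^{it\langle\lambda,F_n\rangle}]+o(1)$, where $\lambda^T\Sigma_n\lambda:=\sum_{i,j}\lambda_i\lambda_j\langle u_n^j,D^qF_n^i\rangle\to\lambda^T\Sigma\lambda$ in $L^1$, uniformly on compact $t$-intervals. Since $\Sigma$ is $\mathcal F$-measurable, the target $\Phi(t)=\mathbb E[Z\,e^{-\frac{t^2}2\lambda^T\Sigma\lambda}]$ satisfies the limiting relation $\Phi'(t)=-t\,\mathbb E[Z(\lambda^T\Sigma\lambda)e^{-\frac{t^2}2\lambda^T\Sigma\lambda}]$ with $\Phi_n(0)=\Phi(0)=\mathbb E[Z]$. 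To close the argument I would not treat this as a closed ODE in $\Phi_n$ (the factor $\lambda^T\Sigma\lambda$ prevents that), but instead run a Gr\"onwall estimate on the family of differences $\sup_{Z}|\mathbb E[Z e^{it\langle\lambda,F_n\rangle}]-\mathbb E[Ze^{-\frac{t^2}2\lambda^T\Sigma\lambda}]|$ over bounded $\mathcal F$-measurable $Z$ (after truncating $\lambda^T\Sigma\lambda$ to make it bounded), exploiting that replacing $Z$ by $Z\,(\lambda^T\Sigma\lambda)$ keeps it in the same class. Evaluating the resulting estimate at $t=1$ yields $\Phi_n(1)\to\Phi(1)$, which is precisely the conditional Gaussian stable convergence claimed; the nonnegative definiteness of $\Sigma$ guarantees that $e^{-\frac12\lambda^T\Sigma\lambda}$ is a genuine conditional characteristic function.
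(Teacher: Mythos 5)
The paper itself does not prove this statement: Theorem 2.3 is imported, with the attribution ``first appeared in \cite{NoNu}, and the present multi-dimensional version was proved in \cite{HaNu},'' so there is no in-paper proof to compare against. Your proposal reconstructs essentially the argument of the cited source: reduce stable convergence to convergence of $\mathbb{E}[Z e^{i\langle\lambda,F_n\rangle}]$ for $Z$ in a determining class (tightness coming from the $L^1$-bound on $F_n$), differentiate the interpolating characteristic function, use the duality \req{Duality} to move $\delta^q$ onto $Z e^{it\langle\lambda,F_n\rangle}$, expand $D^q$ by Leibniz and Fa\`a di Bruno, identify the single-block principal term with $\langle u_n^j, D^qF_n^k\rangle_{\hten^{\otimes q}}$ (handled by condition (b)), kill the remaining contractions by condition (a), and close with a truncated Gr\"onwall comparison against the limiting Gaussian relation. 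This is the right strategy and, at the level of detail you give, it is sound.

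The one place where your write-up is off target is the step you yourself flag as the main obstacle. The ``internal'' terms with $r=0$ and $m\ge 2$ blocks, namely $\langle u_n^j, \bigotimes_{\ell}D^{a_\ell}F_n^{j_\ell}\rangle_{\hten^{\otimes q}}$ with $a_1+\cdots+a_m=q$ and every $a_\ell\le q-1$, are not something to be \emph{derived} from the $r\ge 1$ cases: they are part of hypothesis (a) as the authors intend it. Remark 2.4 explicitly lists $\langle u_n^i, D^2F_n^j\otimes DF_n^k\rangle_{\hten^{\otimes 3}}$ and $\langle u_n^i, DF_n^j\otimes DF_n^k\otimes DF_n^\ell\rangle_{\hten^{\otimes 3}}$ among the quantities assumed to vanish, and Lemma 3.10 of the paper verifies precisely these contractions in the application --- even though the displayed index range ``$1\le a_1,\dots,a_m,r\le q-1$'' literally forces $r\ge 1$. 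Your proposed ``upgrade'' from deterministic test tensors $h$ to the random tensors $D^{a_\ell}F_n^{j_\ell}$ via Meyer's inequality \req{Meyer} and $L^1$-boundedness would not go through in general: Sobolev bounds give no compactness with which to approximate $D^{a_\ell}F_n^{j_\ell}$ uniformly in $n$ by finitely many deterministic tensors, so condition (a) with $r\ge1$ alone does not control these terms. Read the hypothesis as including the $r=0$, $m\ge2$ contractions (as the paper's own use of the theorem does) and this gap disappears; as a derivation from the literal $r\ge 1$ statement, it would remain a genuine gap.
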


\medskip 
\begin{remark}
Conditions (a) and (b) mean that for $q \ge 1$, some combinations of lower-order derivative products are negligible.  In particular, for $q=3$, then the following scalar products will converge to zero in $L^1(\Omega, \hten)$:
\begin{itemize}
\item $\left< u_n^i, h\right>_{\hten^{\otimes 3}}$ for all $h \in \hten^{\otimes 3}$.
\item $\left< u_n^i, DF_n^j \otimes h \right>_{\hten^{\otimes 3}} $ for all $h \in \hten^{\otimes 2}$ and all $j$ (including $i=j$).
\item $\left< u_n^i, D^2F_n^j \otimes h \right>_{\hten^{\otimes 3}} $ for all $h \in \hten$ and all $j$.
\item $\left< u_n^i, D^2F_n^j \otimes DF_n^k \right>_{\hten^{\otimes 3}}$ and $\left< u_n^i, DF_n^j \otimes DF_n^k\otimes DF_n^\ell \right>_{\hten^{\otimes 3}}$for all $1 \le k,j,\ell \le d$.
\end{itemize}
Only the $3^{rd}$-order derivative products $\left< u_n^i, D^3F_n^j\right>_{\hten^{\otimes 3}}$ converge to a nontrivial random variable.   \end{remark}

\medskip
\begin{remark}  It suffices to impose condition (a) for $h \in {\cal S}_0$, where ${\cal S}_0$ is a total subset of $\hten^{\otimes r}$.\end{remark}

\medskip 
For the main result of this paper, we will also require that the sequence $F_n$ satisfies a relative compactness condition in order to establish convergence in the Skorohod space ${\mathbf D}[0,\infty)$.
\begin{corollary}
Suppose $\{ G_n(t), t\ge 0\}$ is a sequence of ${\mathbb R}$-valued processes of the form $G_n(t) = \delta^q\left(u_n(t)\right)$, where $u_n(t)$ is a sequence of symmetric functions in ${\mathbb D}^{2q,2q}(\hten^{\otimes q})$.  Assume that for any finite set of times
$\{ 0=t_0 < t_1 <\dots<t_d\}$, the sequence
$$\left( G_n(t_1)-G_n(t_0), \dots, G_n(t_d) - G_n(t_{d-1})\right)$$ 
satisfies Theorem 2.3; where the $d\times d$ matrix $\Sigma$ is diagonal with entries $s^2(t_i) - s^2(t_{i-1})$. Suppose further that there exist real numbers $C>0$, $\gamma > 0$, and $\beta >1$ such that for each $n$ and for any $0 \le t_1 < t < t_2$, we have 
$${\mathbb E}\left[ \left| G_n(t)-G_n(t_1)\right|^\gamma\left| G_n(t_2)-G_n(t)\right|^\gamma\right] \le C\left(\frac{\lfloor nt_2 \rfloor - \lfloor nt_1 \rfloor}{n}\right)^\beta.$$
Then the family of stochastic processes $\{ G_n, n \ge 1\}$ converges as $n \to \infty$ to the process $G = \{G_t, t\ge 0\}$, where $G(t)$ is a Gaussian random variable with mean zero and variance $s^2(t)$.  Equivalently, we can say that $G_n(t) \law \sqrt{s^2(t)}~Z$ as $n \to \infty$, where $Z \sim {\cal N}(0,1)$.\end{corollary}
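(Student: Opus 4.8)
The plan is to use the classical two-step criterion for weak convergence in the Skorohod space: first establish convergence of the finite-dimensional distributions, then establish tightness (relative compactness); Prohorov's theorem then yields convergence of the laws. The two hypotheses of the corollary are tailored exactly to these two ingredients — the first supplies the finite-dimensional limits through Theorem 2.3, while the second is a moment bound of Kolmogorov--Chentsov/Billingsley type that supplies tightness.

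First I would treat the finite-dimensional distributions. Fix times $0=t_0<t_1<\dots<t_d$ and consider the increment vector $(G_n(t_1)-G_n(t_0),\dots,G_n(t_d)-G_n(t_{d-1}))$. By assumption this vector satisfies the hypotheses of Theorem 2.3, so it converges stably to a centered vector whose conditional law given $X$ is ${\cal N}(0,\Sigma)$ with $\Sigma$ diagonal and entries $s^2(t_i)-s^2(t_{i-1})$. Diagonality of $\Sigma$ says the limiting increments are conditionally independent. Since $(G_n(t_1),\dots,G_n(t_d))$ is the image of the increment vector under the fixed, continuous partial-sum linear map, and stable convergence is preserved by such maps, the vectors $(G_n(t_1),\dots,G_n(t_d))$ converge stably — in particular in law — to a centered Gaussian vector with independent increments and ${\rm Var}(G(t_i))=\sum_{k\le i}(s^2(t_k)-s^2(t_{k-1}))=s^2(t_i)$ (here $t_0=0$ and $s^2(0)=0$, consistent with $G_n(0)=0$). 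These are precisely the finite-dimensional distributions of a continuous Gaussian process $G$ that, conditionally on $X$, is a time-changed Brownian motion with $G(t)\sim{\cal N}(0,s^2(t))$.

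Next I would establish tightness from the second hypothesis. The bound
$${\mathbb E}\left[ |G_n(t)-G_n(t_1)|^\gamma |G_n(t_2)-G_n(t)|^\gamma\right]\le C\left(\tfrac{\lfloor nt_2\rfloor-\lfloor nt_1\rfloor}{n}\right)^\beta, \qquad t_1<t<t_2,$$
is exactly the two-increment moment criterion for relative compactness in the Skorohod space (a Kolmogorov--Chentsov type estimate; cf. Billingsley, Theorem 13.5), with the nondecreasing continuous control function played by $F(t)=t$ and the exponent $\beta>1$ strictly exceeding the critical value $1$. To apply the criterion one must absorb the floor functions: since $\lfloor nt_2\rfloor-\lfloor nt_1\rfloor\le n(t_2-t_1)+1$, the right-hand side is controlled by $C(t_2-t_1+1/n)^\beta$, and one checks that replacing the fixed control $F$ by the uniformly convergent sequence $F_n(t)=\lfloor nt\rfloor/n\to t$ does not affect the modulus-of-continuity estimates that drive the criterion. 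Localizing to each compact interval $[0,N]$ (together with control of the initial values, which is immediate since $G_n(0)=\delta^q(u_n(0))$) then gives relative compactness of $\{G_n\}$ in $\mathbf{D}[0,N]$ for every $N$, hence in $\mathbf{D}[0,\infty)$.

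Finally, combining the finite-dimensional convergence with tightness, Prohorov's theorem yields $G_n\Rightarrow G$ in $\mathbf{D}[0,\infty)$; because the limit $G$ has continuous paths (its variance $s^2$ is continuous), every time $t$ is almost surely a continuity point, so the evaluation map is continuous at the limit law and the finite-dimensional limits determine the full law. Specializing to a single time gives the stated marginal $G_n(t)\law \sqrt{s^2(t)}\,Z$ with $Z\sim{\cal N}(0,1)$. I expect the main obstacle to be the tightness step rather than the fidi step: precisely, handling the $n$-dependent discrete time change $\lfloor nt\rfloor/n$ so that a criterion stated for a single continuous control function $F$ applies uniformly in $n$, and verifying the boundary conditions needed to pass from $\mathbf{D}[0,N]$ to $\mathbf{D}[0,\infty)$.
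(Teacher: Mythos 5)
Your proposal is correct and follows the same route the paper takes: the paper gives no detailed argument, simply invoking the well-known criterion of Billingsley, Theorem 13.5 (finite-dimensional convergence from Theorem 2.3 plus the two-increment moment bound for tightness), which is exactly the two-step scheme you carry out. Your additional care with the discrete control function $\lfloor nt\rfloor/n$ is a reasonable filling-in of detail that the paper leaves implicit.
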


This convergence criteria in ${\mathbb D}$ is well known (see, {\em e.g}, \cite{Billingsley}, Thm. 13.5). 

\section{Convergence of the Stratonovich integral}
\subsection{Covariance conditions}
Consider a Gaussian stochastic process $X := \{ X_t, t \ge 0\}$ with covariance function ${\mathbb E}\left[ X_s X_t\right] = R(s,t)$. Assume $R(s,t)$ satisfies the following bounds:  for any $T > 0$, $0 < s \le 1$, and $s \le r,t \le T$:
\begin{enumerate}[(i)]
\item  $ {\mathbb E}\left[ (X_t-X_{t-s})^2\right] \le C_1s^{\frac 13}$, for a positive constant $C_1$.
\item  If $t > s$,
$$\left|{\mathbb E}\left[ X_t^2 - X_{t-s}^2 \right]\right| \le C_2 s^{\frac{1}{3}+\theta} (t-s)^{-\theta}$$ for some $C_2$ and $1/2 < \theta < 1$.
\item  For $t \ge 4s$,
$$\left|{\mathbb E}\left[ (X_t - X_{t-s})^2 - (X_{t-s} - X_{t-2s})^2\right]\right| \le C_3 s^{\frac 13 + \nu}(t-2s)^{-\nu}$$
for some constants $C_3$ and $\nu > 1$.
\item  There is a constant $C_4$ and a real number $\lambda \in (\frac 16, \frac 13]$ such that $$\left| {\mathbb E}\left[ X_r (X_t - X_{t-s})\right]\right| \le \begin{cases}  C_4s\left( (t-s)^{\lambda-1}+|t-r|^{\lambda-1}\right)&\text{ if } |t-r| \ge 2s \text{ and } t\ge 2s\\
C_4s^{\lambda} &\text{otherwise}\end{cases}$$
\item  There is a constant $C_5$ and a real number $\gamma >  1$ such that for $t\wedge r \ge 2s$ and $|t-r| \ge 2s$, $$\left|{\mathbb E}\left[ (X_t - X_{t-s}) (X_r - X_{r-s})\right]\right| \le  C_5 s^{\frac{1}{3}+\gamma}|t-r|^{-\gamma} .$$
\item  For integers $n >0$ and integers $0\le j,k \le nT,$ define $\beta_n(j,k) := {\mathbb E}\left[ (X_{\frac{j+1}{n}} - X_{\frac{j}{n}})(X_{\frac{k+1}{n}} - X_{\frac{k}{n}})\right]$.  Then for each real number $0 \le t \le T$,
\beq{eta_def}\lim_{n \to \infty} \sum_{j,k=0}^{\Nt -1} \beta_n(j,k)^3 = \eta(t),\eeq
where $\eta(t)$ is a continuous and nondecreasing function with $\eta(0)=0$.  As we will see, $\eta(t)$ is comparable to the `cubic variation' $[X,X,X]_t$ discussed in \cite{GNRV} and \cite{NoRevSwan}.  As described in \cite{NoRevSwan}, these terms are related by Theorem 10 of \cite{NuOrtiz}.  \end{enumerate}

In particular, it can be shown that the above conditions are satisfied by fBm with Hurst parameter $H=1/6$.  In Section 4 we will show additional examples.

\medskip
In addition to conditions (i) - (vi) on $X$, we will also assume the following condition (0) on the test function $f$:

\begin{enumerate}[(0)]
\item  Let $f:{\mathbb R}\to{\mathbb R}$ be a ${\cal C}^\infty$ function, such that $f$ and all its derivatives have at most polynomial growth.  
\end{enumerate}

\medskip 
Consider a uniform partition of $[0, \infty) $ with increment length $1/n$.  The Stratonovich integral of $f'(W)$ will be defined as the limit in probability of the sequence (see \cite{NoRevSwan}):
\beq{Phi_n} \Phi_n^X(t) := \frac 12\sum_{j=0}^{\Nt -1} \left[f'(X_{\frac jn}) + f'(X_{\frac{j+1}{n}})\right] \Delta X_{\frac jn} \eeq
where $\Delta X_{\frac jn} = X_{\frac{j+1}{n}} - X_{\frac jn}$. 

\bigskip
The following is the major result of this section.

\begin{theorem}
Let $f$ be a real function satisfying condition (0), and let $X = \{X_t, t \ge 0\}$ be a Gaussian process satisfying conditions  (i) through (vi).  Then:
$$\left( X_t, \Phi_n^X(t) \right) \law \left( X_t, f(X_t) - f(X_0) + \frac{\sqrt{6}}{12}\int_0^t f^{(3)}(X_s)~ dB_s \right)$$
as $n \to \infty$ in the Skorohod space ${\mathbf D}[0, \infty)$, where $B = \{B_t, t\ge 0\}$ is a scaled Brownian motion, independent of $X$, and with variance ${\mathbb E}\left[B_t^2\right] = \eta(t)$ for the function $\eta$ defined in condition (vi). \end{theorem}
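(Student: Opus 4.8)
The plan is to realize $\Phi_n^X(t)$, up to terms negligible in probability, as a telescoping ``main'' term plus a single $3$-fold Skorohod integral, and then to invoke the central limit theorem (Theorem 2.3) together with its Skorohod-space strengthening (Corollary 2.6). Write $\epsilon_j := {\mathbf 1}_{[\frac jn,\frac{j+1}{n}]}$ and $\phi_j := {\mathbf 1}_{[0,\frac jn]}$, so that $\Delta X_{\frac jn} = X(\epsilon_j)$, $X_{j/n} = X(\phi_j)$ and $DX_{j/n} = \phi_j$. A finite-order Taylor expansion of $f'(X_{(j+1)/n})$ about $X_{j/n}$ gives, for each $j$, the exact increment $f(X_{(j+1)/n}) - f(X_{j/n})$ plus the cubic term $\tfrac1{12}f^{(3)}(X_{j/n})(\Delta X_{j/n})^3$ plus even-order and higher-order contributions; summing over $j$ telescopes the first piece to $f(X_{\Nt/n}) - f(X_0)$, which tends to $f(X_t) - f(X_0)$ by path continuity of $X$. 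Using the chaos identity $(\Delta X_{j/n})^3 = I_3(\epsilon_j^{\otimes 3}) + 3\|\epsilon_j\|_\hten^2\, X(\epsilon_j)$ and Lemma 2.1(a) to move the random coefficient $f^{(3)}(X_{j/n})$ inside the divergence, I would rewrite the cubic sum as $\delta^3(u_n(t)) + A_n(t)$, where
$$u_n(t) = \frac1{12}\sum_{j=0}^{\Nt-1} f^{(3)}(X_{j/n})\,\epsilon_j^{\otimes 3} \in {\mathbb D}^{6,6}(\hten^{\otimes 3}),$$
and $A_n(t)$ collects the lower-order integrals $\delta^2,\delta^1,\delta^0$ arising from the contractions $\langle\phi_j,\epsilon_j\rangle_\hten^r$, the trace factors $3\|\epsilon_j\|_\hten^2$, and the higher Taylor remainders.

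Granting the reduction $\Phi_n^X(t) = f(X_{\Nt/n}) - f(X_0) + \delta^3(u_n(t)) + A_n(t)$ with $A_n(t)\to 0$ in probability, it remains to apply Corollary 2.6 to $G_n(t) := \delta^3(u_n(t))$. For a partition $0 = t_0 < \cdots < t_d$ the increments are $G_n(t_i) - G_n(t_{i-1}) = \delta^3(u_n(t_i) - u_n(t_{i-1}))$, and I must verify the hypotheses of Theorem 2.3 with $q=3$. Since $D^a f^{(3)}(X_{j/n}) = f^{(3+a)}(X_{j/n})\phi_j^{\otimes a}$, every scalar product in condition (a) (listed in Remark 2.4) reduces to a sum weighted by powers of the covariances $\langle\phi_j,\epsilon_j\rangle_\hten$, $\langle\phi_j,\phi_k\rangle_\hten$ and $\beta_n(j,k)=\langle\epsilon_j,\epsilon_k\rangle_\hten$, which I would drive to zero in $L^1$ using conditions (i)--(v). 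For condition (b), the quantity $\langle u_n(t_i)-u_n(t_{i-1}),\, D^3\big(G_n(t_i)-G_n(t_{i-1})\big)\rangle_{\hten^{\otimes 3}}$ has dominant part $\tfrac{3!}{144}\sum_{j,k} f^{(3)}(X_{j/n})f^{(3)}(X_{k/n})\beta_n(j,k)^3$ (from the $i=3$ term of Lemma 2.1(b)); the off-diagonal decay in condition (v) concentrates this on the diagonal, so by condition (vi) it converges to $\tfrac1{24}\int_{t_{i-1}}^{t_i} f^{(3)}(X_s)^2\,d\eta(s)$. Hence $\Sigma$ is diagonal with entries $s^2(t_i)-s^2(t_{i-1})$, where $s^2(t) = \tfrac1{24}\int_0^t f^{(3)}(X_s)^2\,d\eta(s)$ — precisely the conditional variance of $\tfrac{\sqrt6}{12}\int_0^t f^{(3)}(X_s)\,dB_s$ given $X$.

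Theorem 2.3 then gives stable convergence of the increment vector to a conditionally Gaussian law ${\cal N}(0,\Sigma)$ given $X$; stability is exactly what lets me carry $X_t$ along and realize the limit as $\tfrac{\sqrt6}{12}\int_0^t f^{(3)}(X_s)\,dB_s$ with $B$ a scaled Brownian motion independent of $X$ and ${\mathbb E}[B_t^2]=\eta(t)$. To upgrade finite-dimensional convergence to convergence in ${\mathbf D}[0,\infty)$, I would verify the relative-compactness bound of Corollary 2.6, estimating ${\mathbb E}[|G_n(t)-G_n(t_1)|^\gamma|G_n(t_2)-G_n(t)|^\gamma]$ by means of Lemma 2.1(c) and the Meyer inequality \req{Meyer}, which reduce the relevant Skorohod-integral moments to contractions of $u_n$ again controlled by conditions (i)--(v). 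Combining the convergence of $G_n$, the vanishing of $A_n$, and $f(X_{\Nt/n})-f(X_0)\to f(X_t)-f(X_0)$ yields the asserted joint convergence of $\big(X_t,\Phi_n^X(t)\big)$.

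The \emph{main obstacle} is the control of the remainder $A_n(t)$ and the verification of condition (a). The delicacy is that several individual pieces of $A_n$ have expectations of order $n^{1/3}$ that diverge and cancel only when grouped correctly: in particular the quartic Taylor term $\tfrac1{24}\sum_j f^{(4)}(X_{j/n})(\Delta X_{j/n})^4$, whose mean behaves like $+\tfrac18\sum_j \|\epsilon_j\|_\hten^4\,{\mathbb E}[f^{(4)}(X_{j/n})]$, is cancelled by the trace term $\tfrac14\sum_j \|\epsilon_j\|_\hten^2\,f^{(4)}(X_{j/n})\langle\phi_j,\epsilon_j\rangle_\hten$ produced by the cubic term, because $\langle\phi_j,\epsilon_j\rangle_\hten \approx -\tfrac12\|\epsilon_j\|_\hten^2$. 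Making these cancellations precise and showing that what survives vanishes in $L^1$ is exactly where the six covariance conditions enter in their sharp form — the exponents $\theta>\tfrac12$, $\nu>1$, $\gamma>1$ and $\lambda\in(\tfrac16,\tfrac13]$ are what force the various weighted sums to converge — and this constitutes the technical heart of the proof, presumably the content of the longer lemmas deferred to Section 5.
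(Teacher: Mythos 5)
Your overall architecture matches the paper's: reduce $\Phi_n^X(t)$ to $f(X_{\lfloor nt\rfloor/n})-f(X_0)$ plus a triple Skorohod integral plus negligible remainders, verify conditions (a) and (b) of Theorem 2.3 for $q=3$ (with the same limiting variance $\tfrac1{24}\int_0^t f^{(3)}(X_s)^2\,\eta(ds)$, hence the factor $\sqrt6/12$), and close with the moment bound of Corollary 2.6 via the Meyer inequality. The genuine divergence is at the Taylor-expansion step, and it matters. The paper expands \emph{symmetrically about the midpoint} $\hat X_{j/n}=\tfrac12(X_{j/n}+X_{(j+1)/n})$ (following Lemma 5.2 of \cite{NoRevSwan}): writing $X_{(j+1)/n}=\hat X_{j/n}+h_j$ and $X_{j/n}=\hat X_{j/n}-h_j$ makes every even-order Taylor term cancel \emph{identically}, so the only surviving pieces are $f^{(3)}(\hat X_{j/n})\Delta X_{j/n}^3$, an $f^{(5)}$ term of order $\Delta X^5$ (killed by Lemma 3.3), and order-7 remainders. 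You instead expand about the left endpoint, which resurrects the even-order terms, and you correctly observe that the resulting $\tfrac1{24}\sum_j f^{(4)}(X_{j/n})\Delta X_{j/n}^4$ diverges like $n^{1/3}$ and must cancel against the trace term produced by the cubic contraction, using $\langle\varepsilon_{j/n},\partial_{j/n}\rangle_{\mathfrak H}\approx-\tfrac12\|\partial_{j/n}\|^2_{\mathfrak H}$ (the error being controlled by condition (ii)).

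The gap is that this is only the first of a cascade of cancellations your route requires, and you have not identified the others. In the left-endpoint expansion the sixth-order term $\tfrac1{360}\sum_j f^{(6)}(X_{j/n})\Delta X_{j/n}^6$ contributes a nonvanishing $O(1)$ amount (its mean is $\tfrac1{24}\sum_j\|\partial_{j/n}\|^6_{\mathfrak H}f^{(6)}$, which for fBm with $H=1/6$ converges to a nonzero multiple of $\int_0^t f^{(6)}(X_s)\,ds$), and it is cancelled only by combining it with the double-contraction trace of the quartic term, the single-contraction trace of the $f^{(5)}\Delta X^5$ term, and the triple contraction of the cubic term (the coefficients $\tfrac1{24}+\tfrac1{16}-\tfrac1{96}-\tfrac3{32}$ do sum to zero, but this must be exhibited, together with the condition-(ii) error estimates at each stage). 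As written, your ``main obstacle'' paragraph would not close the proof. This is precisely the bookkeeping the paper's midpoint expansion is designed to avoid: after it, the only delicate non-Skorohod term is $3\sum_j\|\Delta X_{j/n}\|^2_{L^2}f^{(3)}(\hat X_{j/n})\Delta X_{j/n}$, which the paper handles by Abel summation using condition (iii) (Lemma 3.6) --- a condition your proposal never invokes and whose role you would need to rediscover. I would recommend adopting the symmetric expansion; with it, the rest of your plan (Lemmas 3.7--3.13 in the paper) goes through essentially as you describe.
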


\medskip
The proof follows from Theorem 2.3 and Corollary 2.6, and is given in a series of lemmas.  Following is an outline of the proof. After a preliminary technical lemma, we use a Taylor expansion to decompose \[\Phi_n^X(t) = f(X_t) - f(X_0) + \frac{1}{12}F_n(t) + \Delta_n(t).\]
We first show that $\Delta_n(t) \stackrel{\cal P}{\longrightarrow} 0$ as $n \to\infty$; then we show that $F_n(t)$ satisfies Theorem 2.3.  Next we show that $F_n(t) + \Delta_n(t)$ is relatively compact in the sense of Corollary 2.6, and the result follows.

\medskip 
Introduce the following notation, which is similar to that of \cite{HaNu} and \cite{NoNu}.  Let $\varepsilon_t = {\mathbf 1}_{[0,t]}$; $\partial_{\frac jn} = {\mathbf 1}_{\left[{\frac jn},\frac{j+1}{n}\right]}$; let $\Xhat = \frac 12\left(X_{\frac jn} + X_{\frac{j+1}{n}}\right)$; and $\hat{\varepsilon}_{\frac jn} = \frac 12 (\varepsilon_{\frac jn} + \varepsilon_{\frac{j+1}{n}})$.  In the following, the term $C$ represents a generic positive constant, which may change from line to line.  The constant $C$ may depend on $T$ and the constants in conditions (0) and (i) - (vi) listed above.  By the isometry between the space generated by $X$ and the Hilbert space $\hten$, we will use the terms ${\mathbb E}[X_s X_t] = \left< \varepsilon_s, \varepsilon_t\right>_\hten$ interchangeably.

We begin with the following technical results, which follow from conditions (i) through (v).
\begin{lemma}  Assume $\{X_t, 0\le t \le T\}$ satisfies conditions (i), (ii), (iv) and (v).  For integers $n \ge 1$, $r \ge 1$ and integers $0 \le a < b < c \le \lfloor nT \rfloor$, there exists a constant $C > 0$, which does not depend on $a,b,c$ or $r$, such that: 
\begin{enumerate}[(a)]
\item \[\sup_{0\le j,k\le\lfloor nT \rfloor} \left|\left< \partial_{\frac jn}, \partial_{\frac kn}\right>_\hten\right| \le Cn^{-\frac 13};\quad \text{ and } \; \sup_{0\le u \le T}\sup_{0\le j \le \lfloor nT \rfloor} \left| \left< \varepsilon_u, \partial_{\frac jn}\right>_\hten\right| \le Cn^{-\lambda}.\]
\item \begin{align} &\sum_{j=a}^b \left| \left<\hat{\varepsilon}_{\frac jn} , \partial_{\frac jn }\right>_\hten\right|
\le Cn^{-\frac 13}(b-a+1)^{1-\theta};\;\text{ and  }\\
&\sum_{j=a}^b \left|\left<\hat{\varepsilon}_{\frac jn} , \partial_{\frac jn }\right>_\hten\right|^r \le Cn^{-\frac r3}\; \text{ for }\;r>1.\end{align}

\item For $0\le u,v \le T$, 
\begin{align}
&\sum_{j = a}^b \left|\left< \varepsilon_u , \partial_{\frac jn }\right>_\hten\right|
\le C;\;\text{ and }\\ 
&\sum_{j=a}^b \left|\left<\varepsilon_u , \partial_{\frac jn }\right>_\hten\left<\varepsilon_v , \partial_{\frac jn }\right>_\hten\right| \le Cn^{-2\lambda}.\end{align}
\item  \beq{L32c}\sum_{j,k=a}^b \left|\left<\partial_{\frac jn}, \partial_{\frac kn}\right>_\hten \right|^r \le C(b-a+1)n^{-\frac r3}.\eeq
\item  \beq{L32d}\sum_{k=b+1}^c \sum_{j=a}^b \left|\left< \partial_{\frac jn}, \partial_{\frac kn}\right>_\hten \right|^r \le C(c-b)^\epsilon n^{-\frac r3}\eeq where $\epsilon = \max\{ 1-\theta, 2-\gamma\}$.
\end{enumerate}\end{lemma}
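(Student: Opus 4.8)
The plan is to reduce every scalar product to a covariance of $X$ through the isometry $\langle \varepsilon_s,\varepsilon_t\rangle_\hten = \mathbb{E}[X_sX_t]$, match each one to the single hypothesis that controls it, and then sum. Concretely, $\langle \partial_{\frac jn},\partial_{\frac kn}\rangle_\hten = \mathbb{E}[\Delta X_{\frac jn}\Delta X_{\frac kn}]$ is governed by (i) on the diagonal and by (v) off it; $\langle \varepsilon_u,\partial_{\frac jn}\rangle_\hten = \mathbb{E}[X_u\Delta X_{\frac jn}]$ is governed by (iv); and, since $\partial_{\frac jn} = \varepsilon_{\frac{j+1}n}-\varepsilon_{\frac jn}$ and $\hat\varepsilon_{\frac jn} = \tfrac12(\varepsilon_{\frac jn}+\varepsilon_{\frac{j+1}n})$, one has the identity $\langle \hat\varepsilon_{\frac jn},\partial_{\frac jn}\rangle_\hten = \tfrac12(\mathbb{E}[X_{\frac{j+1}n}^2]-\mathbb{E}[X_{\frac jn}^2])$, which is governed by (ii). With these identifications part (a) is immediate: the diagonal of the first bound is condition (i) with $s=\tfrac1n$, and the off-diagonal follows by Cauchy--Schwarz from $\|\partial_{\frac jn}\|_\hten \le Cn^{-1/6}$; for the second bound one applies (iv) and checks that, because $\lambda-1<0$ and the relevant times all exceed $s=\tfrac1n$, both branches of (iv) collapse to a multiple of $s^\lambda = n^{-\lambda}$.

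The recurring device for the summed estimates (b)--(d) is to insert $s=\tfrac1n$ into the governing hypothesis, read off a per-term bound of the shape $n^{-1/3}j^{-\theta}$ (from (ii)) or $n^{-1/3}|j-k|^{-\gamma}$ (from (v)), and then sum using the elementary inequality $\sum_{j=a}^b j^{-p}\le C(b-a+1)^{1-p}$ for $0<p<1$ --- obtained from $j\ge j-a+1$, the reindexing $m=j-a+1$, and comparison with $\int x^{-p}\,dx$ --- together with the convergence of $\sum_m m^{-p}$ for $p>1$. For (b), $\theta\in(\tfrac12,1)$ yields the factor $(b-a+1)^{1-\theta}$ when $r=1$; since $r$ is an integer, for $r\ge 2$ we have $r\theta>1$, so the power sum converges and the bound improves to $Cn^{-r/3}$. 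For (c), the first sum is a Riemann sum comparable to $\int_0^T(x^{\lambda-1}+|x-u|^{\lambda-1})\,dx$, finite because $\lambda-1>-1$; the second sum hinges on the fact that $\lambda\le\tfrac13$ forces $2(\lambda-1)<-1$, so the diagonal singularity $\sum_j j^{2\lambda-2}$ converges and produces exactly the rate $n^{-2\lambda}$, the off-diagonal cross terms being of the smaller order $n^{-1}$. For (d), the diagonal is handled by (i), giving $(b-a+1)n^{-r/3}$, and the off-diagonal by (v): since $r\gamma>1$, $\sum_{j\ne k}|j-k|^{-r\gamma}\le C(b-a+1)$.

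Part (e) is the genuine obstacle. I would split the cross-block double sum according to the separation $|j-k|$. For well-separated pairs (indices $\ge 1$ and $|j-k|\ge 2$) condition (v) gives $|\langle\partial_{\frac jn},\partial_{\frac kn}\rangle_\hten|^r\le Cn^{-r/3}(k-j)^{-r\gamma}$; writing $d=k-b$ and $e=b-j$ and summing $(d+e)^{-r\gamma}$ over the block, the reindexing trick yields the exponent $\max(0,2-r\gamma)$, which is at most $2-\gamma$ and hence absorbed by $\epsilon$. The difficulty is the near-boundary region where (v) is unavailable: the single adjacent pair $(j,k)=(b,b+1)$, controlled crudely by Cauchy--Schwarz at rate $n^{-r/3}$, and --- when $a=0$ --- the initial-increment row $\langle\partial_0,\partial_{\frac kn}\rangle_\hten$, whose increment straddles the origin and so falls outside the hypotheses of (v). These edge terms must be estimated instead through conditions (i), (ii) and (iv), and it is precisely this boundary analysis that pushes the exponent up to $1-\theta$; the bound in (e) is then the larger of the two contributions, $\epsilon=\max\{1-\theta,2-\gamma\}$.

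The main obstacle, therefore, is the boundary bookkeeping in part (e): arranging the split by $|j-k|$ so that the bulk is governed by the covariance-decay hypothesis (v) while the finitely many edge rows are absorbed at the correct rate $n^{-r/3}$, and verifying that the worst of these edge contributions is the claimed $(c-b)^{1-\theta}$. Everything else is a systematic application of the isometry together with the single summation lemma $\sum_{j=a}^b j^{-p}\le C(b-a+1)^{1-p}$.
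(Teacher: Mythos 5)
Your matching of each scalar product to its governing covariance condition is exactly the paper's strategy, and parts (a)--(d) are correct as you describe them: (a) is condition (i) plus Cauchy--Schwarz together with the collapse of both branches of (iv) at $s=\tfrac 1n$; (b) is the identity $\left<\hat{\varepsilon}_{\frac jn},\partial_{\frac jn}\right>_\hten=\tfrac 12{\mathbb E}\left[X_{\frac{j+1}{n}}^2-X_{\frac jn}^2\right]$ fed into (ii) and your summation lemma (the paper handles $j=0$ by (i) and compares with $\int u^{-\theta}du$, which is the same thing); (c) is the exceptional-index-set argument with $\lambda\le \tfrac 13$ forcing $2\lambda-2<-1$; and (d) is your diagonal/off-diagonal split (the paper first factors out $\sup|\beta_n|^{r-1}$ and sums a single power of $|j-k|^{-\gamma}$, you keep all $r$ powers --- both work). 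The bulk of (e) and the adjacent pair $(b,b+1)$ are likewise handled exactly as in the paper.

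The genuine gap is the one step you defer: the edge row $\sum_{k=b+1}^{c}\left|\left<\varepsilon_{\frac 1n},\partial_{\frac kn}\right>_\hten\right|^{r}$ coming from $j=0$ (where $\partial_{\frac 0n}=\varepsilon_{\frac 1n}$), which is precisely where the factor $(c-b)^{1-\theta}$, and hence the $1-\theta$ in $\epsilon$, originates. Asserting that it "must be estimated through (i), (ii) and (iv)" is not a proof, and the two obvious routes each fall short: condition (iv) gives the per-term bound $Cn^{-\lambda}k^{\lambda-1}$, whose $r$-th power sums to a rate $n^{-r\lambda}$ that is strictly weaker than the required $n^{-\frac r3}$ whenever $\lambda<\tfrac 13$; while Cauchy--Schwarz via $\|\varepsilon_{\frac 1n}\|_\hten\le Cn^{-\frac 16}$ recovers $n^{-\frac r3}$ per term but then only yields exponent $1$ on $(c-b)$, not $1-\theta$. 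To close this you must exhibit a per-term bound of the shape $Cn^{-\frac 13}k^{-\theta}$ (i.e.\ an $n^{-\frac 13}$ supremum used on $r-1$ factors combined with a $k$-summable decay on the last), and your proposal never produces such a bound from the stated hypotheses. To be fair, the paper itself dispatches this term with a bare citation of "part (c)," which does not obviously yield $Cn^{-\frac r3}(c-b)^{1-\theta}$ either; but as a blind attempt your write-up leaves exactly this --- the only non-routine estimate in the lemma, and the one you yourself single out as the main obstacle --- unestablished.
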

\begin{proof}
We may assume $a = 0$.  For part (a), the first inequality follows immediately from condition (i) and Cauchy-Schwarz; and the second inequality is just a restatement of condition (iv).  
For (b), applying condition (i) for $j=0$ and condition (ii) for $j \ge 1$, we have:
\begin{align*}\sum_{j=0}^b \left| {\mathbb E}\left[ X_{\frac{j+1}{n}}^2 - X_{\frac jn}^2\right]\right| &\le Cn^{-\frac 13}\sum_{j=1}^b j^{-\theta} + Cn^{-\frac 13}\\
&\le Cn^{-\frac 13}\int_0^{b} u^{-\theta} du + Cn^{-\frac 13}\\
&\le Cn^{-\frac 13}(b+1)^{1-\theta}.\end{align*}
Then if $r \ge 2$, 
\begin{align*}\sum_{j=0}^b \left| {\mathbb E}\left[ X_{\frac{j+1}{n}}^2 - X_{\frac jn}^2\right]\right|^r &\le Cn^{-\frac r3}\sum_{j=1}^b j^{-r\theta} + Cn^{-\frac r3}\\
&\le Cn^{-\frac r3}\end{align*}
because $\theta > 1/2$ implies $j^{-r\theta}$ is summable.

For (c), define the set $J_c = \{ j: 0\le j \le b,\, j=0\; or \;|j-nu| < 2\; or\; |j-nv| < 2\}$, and note that $|J_c| \le 7$.  Then we have by (a) and condition (iv),
\begin{align*}
\sum_{j=0}^b \left| \left<\varepsilon_u , \partial_{\frac jn }\right>_\hten\right| & \le \sum_{j\in J_c} Cn^{-\lambda} + Cn^{-\lambda}\sum_{j \notin J_c} \left(j^{\lambda-1} + |j-nu|^{\lambda-1}\right)\\
&\le Cn^{-\lambda} + Cn^{-\lambda}(b+1)^{\lambda} \le C, \end{align*}
and
\begin{align*}
\sum_{j=0}^b \left|\left<\varepsilon_u , \partial_{\frac jn }\right>_\hten\left<\varepsilon_v , \partial_{\frac jn 	}\right>_\hten\right| &\le \sum_{j\in J_c} Cn^{-2\lambda} + Cn^{-2\lambda}\sum_{j \notin J_c} \left(j^{2\lambda-2} + |j-nu|^{2\lambda-2}+|j-nv|^{2\lambda-2}\right)\\
&\le Cn^{-2\lambda} + Cn^{-2\lambda}\sum_{p=1}^\infty p^{2\lambda-2}\\
&\le Cn^{-2\lambda}\end{align*}
because $\lambda \le 1/3$.  

For (d), define the set: $J_d =\left\{ j,k: j\wedge k < 1 \;or\; |j-k| < 2\right\},$ and note that $|J_d| \le 6(b+1)$.  Then we have by (a) and condition (v)
\begin{align*}
\sum_{j,k=0}^b \left|\left<\partial_{\frac jn}, \partial_{\frac kn}\right>_\hten \right|^r
&\le \sup_{j,k} \left|\left<\partial_{\frac jn}, \partial_{\frac kn}\right>\right|^{r-1}\sum_{j,k = 0}^b \left|\left<\partial_{\frac jn}, \partial_{\frac kn}\right>_\hten \right|\le Cn^{-\frac{r-1}{3}}\sum_{j,k = 0}^b \left|\left<\partial_{\frac jn}, \partial_{\frac kn}\right>_\hten \right|\\
&\le Cn^{-\frac{r-1}{3}}\left( \sum_{(j,k) \in J_d} n^{-\frac 13} + n^{-\frac 13}\sum_{(j,k) \notin J_d} |j-k|^{-\gamma}\right)\\
&\le C(b+1)n^{-\frac r3}.\end{align*}
In particular, if $r=3$ and $b = \lfloor nt\rfloor -1$ (as in condition (vi)), the sum converges absolutely, and the sum vanishes if $r > 3$.

For (e), we consider the maximal case, which occurs when $a=0$:
\begin{align*}
\sum_{k=b+1}^c \sum_{j=0}^b \left|\left< \partial_{\frac jn}, \partial_{\frac kn}\right>_\hten \right|^r
&=\sum_{k=b+1}^c \left|\left< \partial_{\frac 0n}, \partial_{\frac kn}\right>_\hten \right|^r + \sum_{k=b+1}^c \sum_{j=1}^{b-1} \left|\left< \partial_{\frac jn}, \partial_{\frac kn}\right>_\hten \right|^r +\sum_{k=b+1}^c \left|\left< \partial_{\frac bn}, \partial_{\frac kn}\right>_\hten \right|^r. 
\end{align*}
Note that $\partial_{\frac 0n} = \varepsilon_{\frac 1n}$. By part (c) and condition (v), respectively, this is
\begin{align*} &\le \sum_{k=b+1}^c \left|\left< \varepsilon_{\frac 1n}, \partial_{\frac kn}\right>_\hten \right|^r +
Cn^{-\frac r3}\sum_{k=b+1}^c \sum_{j=1}^{b-1} (k-j)^{-\gamma}  + Cn^{-\frac r3}\sum_{k=b+1}^c (k-b)^{-\gamma} \\
&\le Cn^{-\frac r3}(c-b)^{1-\theta} + Cn^{-\frac r3}(c-b)^{2-\gamma} + Cn^{-\frac r3}\\
&\le Cn^{-\frac r3}(c-b)^\epsilon,
\end{align*}
where $\epsilon = \max\{ 1-\theta, 2-\gamma\} < 1$.
\end{proof}

\subsection{Taylor expansion of $\Phi_n^X(t)$}
The details of this expansion were mainly inspired by Lemma 5.2 of \cite{NoRevSwan}.  We begin with the telescoping series,
$$f(X_t) = f(0) + f(X_t) - f(X_{\frac{\Nt}{n}}) + \sum_{j=0}^{\Nt -1} \left[f(X_{\frac{j+1}{n}}) - f(X_{\frac{j}{n}})\right] .$$
By continuity of $f$ and $X$, we know that for large $n$, $f(X_t) - f(X_{\frac{\Nt}{n}}) \to 0$ uniformly on compacts in probability (ucp), so this term may be neglected.
For each $j$, we use a Taylor expansion of order 6 with residual term.  Let
$ h_j := \frac 12 \left[ X_{\frac{j+1}{n}} - X_{\frac{j}{n}}\right].$
Then:
\begin{align*}
f(X_{\frac{j+1}{n}}) - f(X_{\frac{j}{n}}) &= \left( f(\hat{X}_{\frac jn} + h_j) - f(\hat{X}_{\frac jn})\right) - \left( f(\hat{X}_{\frac jn} - h_j) - f(\hat{X}_{\frac jn})\right)\\
&= \sum_{k=1}^6 f^{(k)}(\hat{X}_{\frac jn})\frac{h_j^k}{k!}+ R_n^+(j) - \left(\sum_{k=1}^6 (-1)^kf^{(k)}(\hat{X}_{\frac jn})\frac{h_j^k}{k!}+ R_n^-(j)\right)\\
&=f'(\hat{X}_{\frac jn})\Delta X_{\frac{j}{n}} + \frac{1}{24}f^{(3)}(\hat{X}_{\frac jn})\Delta X_{\frac{j}{n}}^3 + \frac{1}{2^4 5!}f^{(5)}(\hat{X}_{\frac jn})\Delta X_{\frac jn}^5 +R_n^+(j) - R_n^-(j)
\end{align*}
where $\Delta X_{\frac{j}{n}} = 2h_j$; and $R_n^+(j), R^-_n(j)$ are Taylor series remainder terms of order 7.  Next we compute:
\begin{align*}
\frac{f'(X_{\frac{j+1}{n}}) + f'(X_{\frac{j}{n}})}{2} - f'(\hat{X}_{\frac jn}) &= \frac{1}{2}\left( f'(\hat{X}_{\frac jn} + h_j) - f'(\hat{X}_{\frac jn})\right) + 
\frac{1}{2}\left( f'(\hat{X}_{\frac jn} - h_j) - f'(\hat{X}_{\frac jn})\right)\\
&= \frac{1}{2}\sum_{k=1}^5 f^{(1+k)}(\hat{X}_{\frac jn})\frac{h_j^k}{k!} + K_n^+(j) +\frac{1}{2}\sum_{k=1}^5 (-1)^k f^{(1+k)}(\hat{X}_{\frac jn})\frac{h_j^k}{k!} + K_n^-(j)\\
&=\frac{1}{8}f^{(3)}(\hat{X}_{\frac jn}) \Delta X_{\frac{j}{n}}^2 + \frac{1}{2^4 4!}f^{(5)}(\hat{X}_{\frac jn}) \Delta X_{\frac{j}{n}}^4 + \frac{1}{2}\left( K_j^+ + K_j^-\right),
\end{align*}
where $K^+_j, K^-_j$ are remainder terms of order 6.  Combining the two equations, we obtain
\begin{align*}f(X_{\frac{j+1}{n}}) - f(X_{\frac{j}{n}}) &= \frac{f'(X_{\frac{j+1}{n}}) + f'(X_{\frac{j}{n}})}{2}\Delta X_{\frac{j}{n}} - \frac{1}{12}f^{(3)}(\hat{X_j})\Delta X_{\frac{j}{n}}^3 - 
 \frac{4}{2^5 5!} f^{(5)}(\hat{X_j})\Delta X_{\frac{j}{n}}^5\\&\quad+ R_n^+(j) - R_n^-(j) -\frac{1}{4}\left[K_n^+(j) + K_n^-(j)\right]\Delta X_{\frac{j}{n}}.\end{align*}

Our first task is to show that the $f^{(5)}$ terms and the remainder term vanish in probability.
\begin{lemma}For each integer $n \ge 1$ and real numbers $0 \le t_1 < t_2\le T $, 
\beq{f5tight}{\mathbb E}\left[\left(\sum_{j=\lfloor nt_1\rfloor}^{\lfloor nt_2 \rfloor -1} f^{(5)}(\hat{X}_{\frac jn})\Delta X_{\frac jn}^5\right)^2\right] \le Cn^{-\frac 43}\left(
\lfloor nt_2 \rfloor - \lfloor nt_1 \rfloor\right).\eeq\end{lemma}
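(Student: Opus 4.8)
The plan is to estimate the second moment directly, expanding the square into a double sum over $j,k$ and then extracting the decay in $|j-k|$ from the joint Gaussianity of the increments together with the covariance estimates of Lemma 3.2. Set $N := \lfloor nt_2\rfloor - \lfloor nt_1\rfloor$, write $\phi_j := f^{(5)}(\hat X_{\frac jn})$ and $\Delta_j := \Delta X_{\frac jn} = X(\partial_{\frac jn})$, and put $\sigma_j^2 := \langle \partial_{\frac jn}, \partial_{\frac jn}\rangle_\hten \le Cn^{-1/3}$ (Lemma 3.2(a)), so that the left-hand side of \req{f5tight} equals $\sum_{j,k=\lfloor nt_1\rfloor}^{\lfloor nt_2\rfloor-1}\mathbb{E}\left[\phi_j\phi_k\Delta_j^5\Delta_k^5\right]$. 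I would first observe that a naive term-by-term Cauchy--Schwarz bound is useless: it gives only $\left(\sum_j\|\phi_j\Delta_j^5\|_2\right)^2 \le C(Nn^{-5/6})^2$, which for $N\sim n$ is of order $n^{1/3}$ and diverges. The decay in $|j-k|$ must come from the covariance structure, not from the pointwise sizes.

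To extract it I would apply the Malliavin (Gaussian) integration by parts formula repeatedly. Since each $\Delta_j = X(\partial_{\frac jn})$ lies in the first chaos, each expectation $\mathbb{E}[\phi_j\phi_k\Delta_j^5\Delta_k^5]$ expands into a finite sum of terms of the form $\mathbb{E}\left[f^{(5+a)}(\hat X_{\frac jn})\,f^{(5+b)}(\hat X_{\frac kn})\right]$ times a product of inner-product factors. Each of the ten first-chaos factors is absorbed either by pairing with another increment (producing $\langle\partial_{\frac jn},\partial_{\frac jn}\rangle_\hten$, $\langle\partial_{\frac kn},\partial_{\frac kn}\rangle_\hten$, or the cross factor $\langle\partial_{\frac jn},\partial_{\frac kn}\rangle_\hten$) or by a derivative landing on $\phi_j$ or $\phi_k$ (producing $\langle\hat\varepsilon_{\frac jn},\partial_{\cdot}\rangle_\hten$ or $\langle\hat\varepsilon_{\frac kn},\partial_{\cdot}\rangle_\hten$, since $D\hat X_{\frac jn}=\hat\varepsilon_{\frac jn}$). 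By condition (0) and the boundedness of all moments of the Gaussian family, every scalar factor $\mathbb{E}[f^{(5+a)}(\hat X_{\frac jn})f^{(5+b)}(\hat X_{\frac kn})]$ is bounded by a constant $C$.

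The core of the argument is a dichotomy on these terms. Because the five factors $\partial_{\frac jn}$ are odd in number, each term either (I) contains a genuine cross factor joining the two sides -- either $\langle\partial_{\frac jn},\partial_{\frac kn}\rangle_\hten$ or a mixed $\langle\hat\varepsilon_{\frac jn},\partial_{\frac kn}\rangle_\hten$, $\langle\hat\varepsilon_{\frac kn},\partial_{\frac jn}\rangle_\hten$ -- or (II) keeps all $j$-factors on the $j$-side and all $k$-factors on the $k$-side, in which case parity forces at least one self-contraction $\langle\hat\varepsilon_{\frac jn},\partial_{\frac jn}\rangle_\hten$ and one $\langle\hat\varepsilon_{\frac kn},\partial_{\frac kn}\rangle_\hten$. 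In case (I) I would sum the cross factor using Lemma 3.2(d)--(e) (for $\langle\partial_{\frac jn},\partial_{\frac kn}\rangle_\hten$) or Lemma 3.2(c) and condition (iv) (for the mixed factors), the remaining self factors contributing uniform powers $\le Cn^{-1/3}$; a representative estimate is $\sum_{j,k}|\langle\partial_{\frac jn},\partial_{\frac kn}\rangle_\hten|\,\sigma_j^4\sigma_k^4 \le Cn^{-4/3}\sum_{j,k}|\langle\partial_{\frac jn},\partial_{\frac kn}\rangle_\hten| \le Cn^{-5/3}N$. In case (II) the $f$-expectation is bounded by $C$, the sum factorizes, and Lemma 3.2(b) gives $\sum_j|\langle\hat\varepsilon_{\frac jn},\partial_{\frac jn}\rangle_\hten|\,\sigma_j^4 \le Cn^{-2/3}\cdot n^{-1/3}N^{1-\theta} = Cn^{-1}N^{1-\theta}$, so that, using $\theta>1/2$ and hence $N^{1-2\theta}\le 1$, the total is $\le C(n^{-1}N^{1-\theta})^2 = Cn^{-2}N^{2-2\theta}\le Cn^{-2}N$. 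In every case the contribution is dominated by $Cn^{-4/3}N$, which is \req{f5tight}.

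The main obstacle is the bookkeeping of case (II). These terms look dangerous: each is a product of a bounded number of factors, and if one estimated the self-contraction crudely by $\langle\hat\varepsilon_{\frac jn},\partial_{\frac jn}\rangle_\hten\le Cn^{-\lambda}$, the resulting bound summed freely over the $\sim N^2$ pairs $(j,k)$ would blow up. The point -- and the reason condition (ii) enters -- is that the only way to avoid a cross factor is precisely to force self-contractions $\langle\hat\varepsilon_{\frac jn},\partial_{\frac jn}\rangle_\hten = \tfrac12\mathbb{E}[X_{\frac{j+1}{n}}^2 - X_{\frac jn}^2]$, which by condition (ii) decay like $n^{-1/3}j^{-\theta}$ with $\theta>1/2$ and are therefore summable; this is exactly what Lemma 3.2(b) packages. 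Verifying that every contraction pattern falls into the dichotomy, and that none escapes both the Lemma 3.2(d)--(e) decay and the Lemma 3.2(b) decay, is the delicate step; the fifth power (and the sixth-order Taylor expansion producing $f^{(5)}$) is what leaves enough room for the parity argument while keeping all surviving terms of order at least $n^{-4/3}$ per unit of $N$.
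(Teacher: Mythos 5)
Your argument is correct, and it rests on the same two pillars as the paper's proof: a contraction (Wick) expansion of ${\mathbb E}\bigl[f^{(5)}(\hat{X}_{\frac jn})f^{(5)}(\hat{X}_{\frac kn})\Delta X_{\frac jn}^5\Delta X_{\frac kn}^5\bigr]$ and the summability estimates of Lemma 3.2, with the decisive observation in both cases being that the odd exponent $5$ forbids a purely self-paired diagonal pattern. The organization differs, though. The paper first rewrites $\Delta X_{\frac jn}^5$ via the Hermite identity $x^5=H_5(x)+10H_3(x)+15H_1(x)$ as $\delta^5(\partial_{\frac jn}^{\otimes 5})+10\|\Delta X_{\frac jn}\|_{L^2}^2\delta^3(\partial_{\frac jn}^{\otimes 3})+15\|\Delta X_{\frac jn}\|_{L^2}^4\delta(\partial_{\frac jn})$, so that all self-contractions $\langle\partial_{\frac jn},\partial_{\frac jn}\rangle_\hten$ are absorbed in advance into coefficients bounded by $Cn^{-(5-p)/6}$ via condition (i); the multiplication formula \req{intmult} and the duality \req{Duality} then leave only cross factors $\langle\partial_{\frac jn},\partial_{\frac kn}\rangle_\hten^r$ and factors $\langle\hat{\varepsilon}_{\cdot},\partial_{\cdot}\rangle_\hten$ coming from derivatives of the $f^{(5)}$ terms, and the case split is $r\ge 1$ (sum the cross factor by Lemma 3.2.d) versus $r=0$ (at least one $\hat{\varepsilon}$-factor survives; condition (iv) and Lemma 3.2.c). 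Your direct integration by parts reaches the same family of contraction patterns without the Hermite preprocessing, at the cost of enumerating the self-pairings by hand; your parity dichotomy is the paper's $r=0$ case in disguise. The one substantive divergence is in closing the fully diagonal patterns: you invoke Lemma 3.2.b (hence condition (ii)), getting $\sum_j|\langle\hat{\varepsilon}_{\frac jn},\partial_{\frac jn}\rangle_\hten|\le Cn^{-1/3}N^{1-\theta}$ and the factorized bound $Cn^{-2}N^{2-2\theta}\le Cn^{-2}N$, whereas the paper prices each surviving $\hat{\varepsilon}$-factor at $n^{-\lambda}$ by condition (iv) and sums one of them by Lemma 3.2.c. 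Both closings give bounds dominated by $Cn^{-4/3}N$; yours makes it clearer why condition (ii) appears in the hypothesis list for this lemma, while the paper's Hermite route makes the bookkeeping (nine sums indexed by $p,q\in\{1,3,5\}$ and $0\le r\le p\wedge q$) mechanical. Your representative numerical estimates ($Cn^{-5/3}N$ for the cross terms, $Cn^{-2}N$ for the diagonal ones) check out.
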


The proof of this lemma is technical, and is deferred to Section 5.

\medskip
\begin{lemma}  For integers $n \ge 1$, let \[Z_n(t) =\sum_{j=0}^{\Nt -1} \left[R_n^+(j) - R_n^-(j) + \frac 14 \left( K^+_n(j) + K_n^-(j) \right)\Delta X_{\frac jn}\right].\]  Then for real numbers $0 \le t_1 < t_2 \le T$, we have
\beq{Zntight}{\mathbb E}\left[ \left(Z_n(t_2) - Z_n(t_1)\right)^2\right] \le Cn^{-\frac 73}\left( \lfloor nt_2 \rfloor - \lfloor nt_1 \rfloor\right)^2.\eeq\end{lemma}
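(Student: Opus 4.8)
The plan is to bound each summand of $Z_n(t_2)-Z_n(t_1)$ pointwise by a constant multiple of $|\Delta X_{\frac jn}|^7$ times a random coefficient whose moments are bounded uniformly in $n$ and $j$, and then to conclude by Minkowski's inequality together with condition (i). No cancellation among the summands is needed: since the right-hand side of \req{Zntight} carries the full factor $(\lfloor nt_2\rfloor-\lfloor nt_1\rfloor)^2$, the crude triangle-inequality estimate already suffices.

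First I would record the orders of the remainder terms. Recall $h_j=\frac12\Delta X_{\frac jn}$, so that $\Xhat+h_j=X_{\frac{j+1}{n}}$ and $\Xhat-h_j=X_{\frac jn}$. Since $R_n^{\pm}(j)$ is the order-$7$ Taylor remainder of $f(\Xhat\pm h_j)-f(\Xhat)$, Lagrange's form gives $|R_n^+(j)-R_n^-(j)|\le C|h_j|^7\sup_\xi|f^{(7)}(\xi)|$, where the supremum is over the segment joining $X_{\frac jn}$ and $X_{\frac{j+1}{n}}$. Likewise $K_n^{\pm}(j)$ is the order-$6$ remainder of $f'(\Xhat\pm h_j)-f'(\Xhat)$, so $|K_n^{\pm}(j)|\le C|h_j|^6\sup_\xi|f^{(7)}(\xi)|$; multiplying by $\Delta X_{\frac jn}=2h_j$ raises this to order $7$ as well. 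Hence each summand
\[
W_j:=R_n^+(j)-R_n^-(j)+\tfrac14\big(K_n^+(j)+K_n^-(j)\big)\Delta X_{\frac jn}
\]
satisfies $|W_j|\le G_j\,|\Delta X_{\frac jn}|^7$, where $G_j:=C\sup_\xi|f^{(7)}(\xi)|$ over that segment. By condition (0), $f^{(7)}$ has polynomial growth, so $G_j\le C\big(1+|X_{\frac jn}|^p+|X_{\frac{j+1}{n}}|^p\big)$ for some $p$; and since $R(t,t)$ is continuous, $\sup_{0\le t\le T}\mathbb{E}[X_t^2]<\infty$, whence $\sup_n\sup_{0\le j\le nT}\mathbb{E}[G_j^q]<\infty$ for every $q\ge 1$.

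Next I would estimate $\|W_j\|_{L^2}$ uniformly in $j$. By Cauchy--Schwarz, $\mathbb{E}[W_j^2]\le\big(\mathbb{E}[G_j^4]\big)^{1/2}\big(\mathbb{E}[|\Delta X_{\frac jn}|^{28}]\big)^{1/2}$. Because $\Delta X_{\frac jn}$ is centered Gaussian, $\mathbb{E}[|\Delta X_{\frac jn}|^{28}]=27!!\,\big(\mathbb{E}[(\Delta X_{\frac jn})^2]\big)^{14}$, and condition (i) with $s=1/n$ gives $\mathbb{E}[(\Delta X_{\frac jn})^2]\le Cn^{-1/3}$. Therefore $\big(\mathbb{E}[|\Delta X_{\frac jn}|^{28}]\big)^{1/2}\le Cn^{-7/3}$, and combined with the uniform bound on $\mathbb{E}[G_j^4]$ this yields $\mathbb{E}[W_j^2]\le Cn^{-7/3}$, i.e. $\|W_j\|_{L^2}\le Cn^{-7/6}$ uniformly over $\lfloor nt_1\rfloor\le j\le\lfloor nt_2\rfloor-1$.

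Finally I would apply Minkowski's inequality to $Z_n(t_2)-Z_n(t_1)=\sum_{j=\lfloor nt_1\rfloor}^{\lfloor nt_2\rfloor-1}W_j$:
\[
\|Z_n(t_2)-Z_n(t_1)\|_{L^2}\le\sum_{j=\lfloor nt_1\rfloor}^{\lfloor nt_2\rfloor-1}\|W_j\|_{L^2}\le Cn^{-7/6}\big(\lfloor nt_2\rfloor-\lfloor nt_1\rfloor\big),
\]
and squaring gives \req{Zntight}. The only genuinely delicate point is the uniform moment control of the random coefficients $G_j$, namely that the suprema of $f^{(7)}$ over the random segments $[X_{\frac jn}\wedge X_{\frac{j+1}{n}},\,X_{\frac jn}\vee X_{\frac{j+1}{n}}]$ are bounded in every $L^q$ uniformly in $n$ and $j\le nT$; this is exactly where the polynomial-growth hypothesis (0) and the boundedness of $R(t,t)$ on $[0,T]$ are used. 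Everything else reduces to the Gaussian moment identity, condition (i), and the triangle inequality.
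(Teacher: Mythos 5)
Your proposal is correct and follows essentially the same route as the paper: both bound each summand crudely by $C f^{(7)}(\xi_j)\Delta X_{\frac jn}^7$, separate the random coefficient from the increment via Cauchy--Schwarz/H\"older, invoke the Gaussian moment formula together with condition (i) to get $\|W_j\|_{L^2}\le Cn^{-7/6}$, and accept the full $(\lfloor nt_2\rfloor-\lfloor nt_1\rfloor)^2$ factor with no cancellation. Your use of Minkowski's inequality is just the squared form of the paper's term-by-term H\"older bound on the expanded double sum, so the two arguments coincide.
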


\begin{proof}
We may assume $t_1 = 0$.  Observe that each term in the sum $Z_n(t)$ has the form
$$Cf^{(7)}(\xi_j)\Delta X_{\frac jn}^7,$$
where $\xi_j$ is an intermediate value between $X_{\frac jn}$ and $X_{\frac{j+1}{n}}$.  Using the H\"older inequality, for each $0 \le j,k < \lfloor nt_2\rfloor$ we have
$${\mathbb E}\left[ f^{(7)}(\xi_j)f^{(7)}(\xi_k)\Delta X_{\frac jn}^7\Delta X_{\frac kn}^7\right]$$
$$\le \left( \sup_{0<u<1}{\mathbb E}\left[ f^{(7)}(uX_{\frac jn} + (1-u)X_{\frac{j+1}{n}})^4\right]~\sup_{0<v<1}{\mathbb E}\left[ f^{(7)}(vX_{\frac kn} + (1-v)X_{\frac{k+1}{n}})^4\right]~{\mathbb E}\left[\left|\Delta X_{\frac jn}^7\right|^4\right]~{\mathbb E}\left[\left|\Delta X_{\frac kn}^7\right|^4\right]\right)^{\frac 14}.$$
By condition (0), the first two terms are bounded.  By condition (i), ${\mathbb E}\left[ \Delta X_{\frac jn}^2\right] \le C_1n^{-\frac 13}$; and we have by the Gaussian moments formula that
$${\mathbb E}\left[ \Delta X_{\frac jn}^{28}\right] \le 27!!\left(C_1n^{-\frac 13}\right)^{14},$$
hence it follows that
$$C\sum_{j,k=0}^{\lfloor nt_2 \rfloor -1} {\mathbb E}\left[f^{(7)}(\xi_j)f^{(7)}(\xi_k)\Delta X_{\frac jn}^7\Delta X_{\frac kn}^7\right]
\le C\sum_{j,k=0}^{\lfloor nt_2 \rfloor -1} n^{-\frac{7}{3}} \le C\lfloor nt_2\rfloor^2 n^{-\frac 73}.$$
\end{proof}

\subsection{Malliavin calculus representation of $3^{rd}$ order term}
From Lemmas 3.3 and 3.4, it follows that as $n$ becomes large, then $f(X_t)$ behaves asymptotically as
$$f(X_0) + \frac 12\sum_{j=0}^{\Nt -1} \left[f'(X_{\frac{j+1}{n}}) + f'(X_{\frac jn})\right]\Delta X_{\frac jn} -\frac{1}{12}\sum_{j=0}^{\Nt -1} f^{(3)}(\hat{X}_{\frac jn})\Delta X_{\frac jn}^3.$$  We now turn to the $f^{(3)}(\hat{X}_{\frac jn})$ term in the Taylor expansion. 

\medskip
\begin{remark}  It may happen that the upper bound of condition (v) is such that
$$\eta(t) \le \lim_{n\to \infty} \sum_{j,k=0}^{\Nt -1}\left|\beta_n(j,k)^3\right| =0$$
for all $t$, which implies
$$\lim_n \,{\mathbb E}\left[\left(\sum_{j=0}^{\Nt -1} f^{(3)}(\hat{X}_{\frac jn})\Delta X_{\frac jn}^3\right)^2\right] = 0$$
for any function $f$ satisfying condition (0).  In this case, $\Phi_n^X(t)$ converges in probability and we have the change-of-variable formula \req{gnrv}.  Indeed, this corresponds to the case of zero cubic variation discussed in \cite{GNRV}.  In the rest of this section, we will assume that $\eta(t)$ is non-trivial. \end{remark}

\medskip
Consider the $3^{rd}$ Hermite polynomial $H_3(y) = y^3 - 3y$.  For $y=\frac{\Delta X}{\|\Delta X\|_{L^2}},$ it follows that
\begin{align*} \sum_{j=0}^{\Nt -1} f^{(3)}(\hat{X}_{\frac jn}) \Delta X_{\frac jn}^3 = &
\sum_{j=0}^{\Nt -1}  \|\Delta X_{\frac jn}\|_{L^2}^3f^{(3)}(\hat{X}_{\frac jn})H_3\left(\frac{\Delta X_{\frac jn}}{\|\Delta X_{\frac jn}\|_{L^2}}\right) \\
&+3\sum_{j=0}^{\Nt -1}  \|\Delta X_{\frac jn}\|_{L^2}^2 f^{(3)}(\hat{X}_{\frac jn})\Delta X_{\frac jn}\end{align*}
The second term is dealt with in the next lemma.  The proof is technical, and is deferred to Section 5. 
\begin{lemma}  For integers $n \ge 1$ and integers $0 \le a < b \le nT$, 
$${\mathbb E} \left|\sum_{j=a}^{b -1}  \|\Delta X_{\frac jn}\|_{L^2}^2 f^{(3)}(\hat{X}_{\frac jn})\Delta X_{\frac jn}\right| \le Cn^{-\frac 13}.$$
\end{lemma}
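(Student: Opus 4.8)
The plan is to extract a telescoping cancellation rather than estimate $S_n:=\sum_{j=a}^{b-1}\|\Delta X_{\frac jn}\|_{L^2}^2\,f^{(3)}(\Xhat)\,\Delta X_{\frac jn}$ term by term; indeed the triangle inequality only gives $\|S_n\|_{L^2}\le C(b-a)n^{-1/2}$, which is useless. Write $w_j:=\|\Delta X_{\frac jn}\|_{L^2}^2$, so $w_j\le C_1n^{-1/3}$ by condition (i). Applying the symmetric Taylor expansion of Section 3.2 with $f^{(2)}$ in place of $f$,
\[ f^{(3)}(\Xhat)\,\Delta X_{\frac jn}=\Big(f^{(2)}(X_{\frac{j+1}{n}})-f^{(2)}(X_{\frac jn})\Big)-\tfrac1{24}f^{(5)}(\Xhat)\Delta X_{\frac jn}^3-\tfrac1{1920}f^{(7)}(\Xhat)\Delta X_{\frac jn}^5-\widetilde R_j, \]
with $\widetilde R_j$ a remainder of order $7$. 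Multiplying by $w_j$ and summing decomposes $S_n=(\mathrm I)-\tfrac1{24}(\mathrm{II})-\tfrac1{1920}(\mathrm{III})-(\mathrm{IV})$, where $(\mathrm I)=\sum_j w_j\big(f^{(2)}(X_{\frac{j+1}{n}})-f^{(2)}(X_{\frac jn})\big)$, $(\mathrm{II})=\sum_j w_j f^{(5)}(\Xhat)\Delta X_{\frac jn}^3$, and $(\mathrm{III}),(\mathrm{IV})$ collect the higher-order pieces.

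For $(\mathrm I)$ I would use summation by parts. With $F_j:=f^{(2)}(X_{\frac jn})$, Abel summation yields the boundary terms $w_{b-1}F_b-w_aF_a$ and $\sum_j(w_{j-1}-w_j)F_j$. Since $w_j\le C_1n^{-1/3}$ and $\mathbb E|f^{(2)}(X_{\frac jn})|\le C$ (condition (0) together with the boundedness of the variances on $[0,T]$), the boundary terms are $O(n^{-1/3})$ in $L^1$. The variation term is controlled by $\sum_j|w_{j+1}-w_j|\le Cn^{-1/3}$: indeed $w_{j+1}-w_j=\mathbb E[(X_{\frac{j+2}{n}}-X_{\frac{j+1}{n}})^2-(X_{\frac{j+1}{n}}-X_{\frac jn})^2]$, which by condition (iii) (with $s=1/n$) is bounded by $Cn^{-1/3}j^{-\nu}$, and $\nu>1$ makes $\sum_j j^{-\nu}$ converge. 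Hence $\mathbb E|(\mathrm I)|\le Cn^{-1/3}$; this is the step in which condition (iii) is indispensable.

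For $(\mathrm{II})$ I would pass to the second moment, $\mathbb E[(\mathrm{II})^2]=\sum_{j,k}w_jw_k\,\mathbb E\big[f^{(5)}(\Xhat)f^{(5)}(\hat X_{\frac kn})\Delta X_{\frac jn}^3\Delta X_{\frac kn}^3\big]$. Expanding the joint sixth moment (equivalently, iterating the integration-by-parts identity of Lemma 2.1(a)), the dominant contributions are proportional to $\langle\partial_{\frac jn},\partial_{\frac kn}\rangle_\hten$ with weight $w_j^2w_k^2$, and to $\langle\partial_{\frac jn},\partial_{\frac kn}\rangle_\hten^3$ with weight $w_jw_k$; every remaining term carries at least one extra factor $\langle\hat\varepsilon,\partial\rangle_\hten$ and is therefore smaller. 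Using $w_j\le C_1n^{-1/3}$ and the $r=1$ and $r=3$ cases of Lemma 3.2(d), namely $\sum_{j,k}|\langle\partial_{\frac jn},\partial_{\frac kn}\rangle_\hten|\le C(b-a+1)n^{-1/3}$ and $\sum_{j,k}|\langle\partial_{\frac jn},\partial_{\frac kn}\rangle_\hten|^3\le C(b-a+1)n^{-1}$, the two dominant sums are each $\le Cn^{-2/3}$ (the extra weights $n^{-4/3}$ and $n^{-2/3}$ are exactly what these two estimates need, since $b-a\le nT$), while the remaining terms are handled with Lemma 3.2(b),(c). Thus $\mathbb E[(\mathrm{II})^2]\le Cn^{-2/3}$ and $\mathbb E|(\mathrm{II})|\le\|(\mathrm{II})\|_{L^2}\le Cn^{-1/3}$.

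Finally $(\mathrm{III})$ and $(\mathrm{IV})$ are negligible by the arguments used for Lemmas 3.3 and 3.4: the weighted sums $\sum_j w_j f^{(7)}(\Xhat)\Delta X_{\frac jn}^5$ and $\sum_j w_j\widetilde R_j$ only improve on those estimates thanks to the extra factor $w_j\le C_1n^{-1/3}$, so each is $o(n^{-1/3})$ in $L^1$. Summing the four pieces gives $\mathbb E|S_n|\le Cn^{-1/3}$. I expect the main obstacle to be $(\mathrm{II})$: one must check that after expanding the sixth moment the only non-negligible covariance powers are those matched by the $r=1$ and $r=3$ cases of Lemma 3.2(d), and that the correlations between the factors $f^{(5)}$ and the increments (the source of the $\langle\hat\varepsilon,\partial\rangle_\hten$ corrections) are genuinely of lower order; the summation-by-parts estimate for $(\mathrm I)$ through condition (iii) is the other place where a naive absolute-value bound cannot be used.
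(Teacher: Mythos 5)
Your proposal is correct and follows essentially the same route as the paper: the same order-six symmetric Taylor expansion of $f''$ around $\Xhat$, Abel summation with condition (iii) controlling $\sum_j|w_{j+1}-w_j|$, and the same Hermite/Malliavin moment bounds for the cubic, quintic and remainder pieces. The only difference is organizational -- the paper keeps the weight $w_j=\|\Delta X_{\frac jn}\|_{L^2}^2$ outside, first proving the uniform bound ${\mathbb E}\bigl|\sum_{j\le k}f^{(3)}(\Xhat)\Delta X_{\frac jn}\bigr|\le C$ and then performing a single summation by parts on the full weighted sum, whereas you distribute $w_j$ into each Taylor piece and sum by parts only the telescoping term; the estimates required are the same either way.
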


\medskip
Next, we consider the $H_3$ term.  By \req{Hmap} and Lemma 2.1.a we have
\begin{align*}
\sum_{j=0}^{\Nt -1}  \|\Delta X_{\frac jn}\|_{L^2}^3f^{(3)}(\hat{X}_{\frac jn})H_3\left(\frac{\Delta X_{\frac jn}}{\|\Delta X_{\frac jn}\|_{L^2}}\right) &= \sum_{j=0}^{\Nt -1} f^{(3)}(\hat{X}_{\frac jn})\delta^3\left(\partial_{\frac jn}^{\otimes 3}\right) \\
&=\sum_{j=0}^{\Nt -1} \delta^3\left(f^{(3)}(\hat{X}_{\frac jn})\partial_{\frac jn}^{\otimes 3}\right)  + 3\delta^2\left(\left<Df^{(3)}(\hat{X}_{\frac jn}), \partial_{\frac jn}^{\otimes 3}\right>_\hten \right)\\
&\quad +3\delta\left(\left<D^2f^{(3)}(\hat{X}_{\frac jn}), \partial_{\frac jn}^{\otimes 3}\right>_{\hten^{\otimes 2}} \right) +\left<D^3f^{(3)}(\hat{X}_{\frac jn}), \partial_{\frac jn}^{\otimes 3}\right>_{\hten^{\otimes 3}}\\
&= \sum_{j=0}^{\Nt -1} \delta^3\left(f^{(3)}(\hat{X}_{\frac jn})\partial_{\frac jn}^{\otimes 3}\right) + P_n(t).
\end{align*}

As $n \to \infty$, we show that the term $P_n(t)$ vanishes in probability.

\begin{lemma}For integers $n \ge 1$ and real numbers $0 \le t_1 < t_2 \le T$,
$${\mathbb E}\left[P_n(t)^2\right] \le C\left(\lfloor nt_2 \rfloor - \lfloor nt_1 \rfloor\right)n^{-\frac 43}.$$\end{lemma}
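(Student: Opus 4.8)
The plan is to establish the increment bound $\mathbb{E}\big[(P_n(t_2)-P_n(t_1))^2\big]\le C(\lfloor nt_2\rfloor-\lfloor nt_1\rfloor)n^{-4/3}$, where $P_n(t_2)-P_n(t_1)$ denotes the part of the defining sum with $j$ ranging over $\lfloor nt_1\rfloor\le j\le\lfloor nt_2\rfloor-1$; write $N:=\lfloor nt_2\rfloor-\lfloor nt_1\rfloor$. First I would simplify the three summands. Since $D^k f^{(3)}(\Xhat)=f^{(3+k)}(\Xhat)\,\hat{\varepsilon}_{\frac jn}^{\otimes k}$, writing $\alpha_j:=\langle\hat{\varepsilon}_{\frac jn},\partial_{\frac jn}\rangle_\hten$ the contractions collapse:
\[\langle Df^{(3)}(\Xhat),\partial_{\frac jn}^{\otimes 3}\rangle_\hten=f^{(4)}(\Xhat)\,\alpha_j\,\partial_{\frac jn}^{\otimes 2},\]
and likewise the $D^2$- and $D^3$-contractions equal $f^{(5)}(\Xhat)\alpha_j^2\partial_{\frac jn}$ and $f^{(6)}(\Xhat)\alpha_j^3$. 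Thus $P_n=3\delta^2(v_n)+3\delta(w_n)+C_n$, with $v_n=\sum_j f^{(4)}(\Xhat)\alpha_j\partial_{\frac jn}^{\otimes 2}$, $w_n=\sum_j f^{(5)}(\Xhat)\alpha_j^2\partial_{\frac jn}$, and $C_n=\sum_j f^{(6)}(\Xhat)\alpha_j^3$, all summed over the stated range. By $(a+b+c)^2\le 3(a^2+b^2+c^2)$ it suffices to bound the three pieces separately.

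For the Skorohod integrals I would invoke Lemma 2.1(c) to reduce the $L^2$ norms to covariance sums. For $v_n$ this gives $\mathbb{E}[\delta^2(v_n)^2]\le\sum_{i=0}^2\binom{2}{i}^2\mathbb{E}\|D^{2-i}v_n\|_{\hten^{\otimes(4-i)}}^2$, and each Malliavin derivative only contributes extra factors $\hat{\varepsilon}_{\frac jn}$ from differentiating $f^{(4)}(\Xhat)$, whose inner products $\langle\hat{\varepsilon}_{\frac jn},\hat{\varepsilon}_{\frac kn}\rangle_\hten=\mathbb{E}[\Xhat\hat{X}_{\frac kn}]$ are uniformly bounded on $[0,T]$. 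After expanding, every resulting term has the shape $\sum_{j,k}\mathbb{E}[f^{(m)}(\Xhat)f^{(m)}(\hat{X}_{\frac kn})]\,\alpha_j\alpha_k\,\langle\partial_{\frac jn},\partial_{\frac kn}\rangle_\hten^2$ times bounded $\hat{\varepsilon}$-factors, and the expectation is bounded by condition (0) with Cauchy--Schwarz. Using $|\langle\partial_{\frac jn},\partial_{\frac kn}\rangle_\hten|\le Cn^{-1/3}$ from Lemma 3.2(a), each such term is dominated by $Cn^{-2/3}\big(\sum_j|\alpha_j|\big)^2$. The first inequality of Lemma 3.2(b) gives $\sum_j|\alpha_j|\le Cn^{-1/3}N^{1-\theta}$, so $\mathbb{E}[\delta^2(v_n)^2]\le Cn^{-4/3}N^{2-2\theta}$; since $\theta>1/2$ forces $2-2\theta<1$ and $N\ge1$, this is at most $CNn^{-4/3}$.

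The remaining pieces are of strictly smaller order. For $w_n$, Lemma 2.1(c) with $q=1$ together with $|\langle\partial_{\frac jn},\partial_{\frac kn}\rangle_\hten|\le Cn^{-1/3}$ yields $\mathbb{E}[\delta(w_n)^2]\le Cn^{-1/3}\big(\sum_j\alpha_j^2\big)^2$, and the second inequality of Lemma 3.2(b) with $r=2$ gives $\sum_j\alpha_j^2\le Cn^{-2/3}$, whence $\mathbb{E}[\delta(w_n)^2]\le Cn^{-5/3}$. For the zero-order term, $\mathbb{E}[C_n^2]\le C\big(\sum_j|\alpha_j|^3\big)^2\le Cn^{-2}$ by Lemma 3.2(b) with $r=3$. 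Both are $\le Cn^{-4/3}\le CNn^{-4/3}$, so summing the three contributions proves the claim.

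The main obstacle is the $\delta^2(v_n)$ term, which carries the full order of the estimate. The subtlety is that $\alpha_j$ appears there only to the first power, so one cannot lean on the summability of $\alpha_j^2$; instead the $N$-dependence must enter through the first-order sum $\sum_j|\alpha_j|\le Cn^{-1/3}N^{1-\theta}$, and it is precisely the hypothesis $\theta>1/2$ in condition (ii) that keeps the exponent $2-2\theta$ below $1$, so that $N^{2-2\theta}\le N$. A sharper bound of order $n^{-4/3}$ with no $N$-factor is in fact attainable by exploiting the off-diagonal decay $|\langle\partial_{\frac jn},\partial_{\frac kn}\rangle_\hten|\le Cn^{-1/3}|j-k|^{-\gamma}$ from condition (v) to replace $\big(\sum_j|\alpha_j|\big)^2$ by $\sum_j\alpha_j^2\sup_j\sum_k\langle\partial_{\frac jn},\partial_{\frac kn}\rangle_\hten^2$, but the cruder estimate above already suffices for the stated inequality.
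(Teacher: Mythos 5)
Your proof is correct and follows essentially the same route as the paper: the identical decomposition of $P_n$ into $\delta^2$-, $\delta$-, and zero-order pieces after collapsing the contractions, the same second-moment identity for multiple Skorohod integrals (your Lemma 2.1(c) is exactly the expansion the paper obtains from the multiplication formula), and the same covariance estimates from Lemma 3.2. The only difference is bookkeeping on the dominant $\delta^2$ term, where you take the supremum over $\langle\partial_{\frac jn},\partial_{\frac kn}\rangle_\hten^2$ and sum $|\alpha_j|$ via Lemma 3.2(b) to get $N^{2-2\theta}\le N$, while the paper takes the supremum over $\alpha_j$ and sums $\langle\partial_{\frac jn},\partial_{\frac kn}\rangle_\hten^2$ via Lemma 3.2(d); both yield the stated bound.
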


\begin{proof}  We may assume $t_1 =0$.  We want to show
\beq{L3e1}{\mathbb E}\left[\left( \delta^2\left(\sum_{j=0}^{\lfloor nt_2 \rfloor -1}\left< Df^{(3)}(\hat{X}_{\frac jn}), \partial_{\frac jn}^{\otimes 3}\right>_\hten \right)\right)^2\right] \le C\lfloor nt_2 \rfloor n^{-\frac 43};\eeq
\beq{L3e2}{\mathbb E}\left[\left( \delta\left(\sum_{j=0}^{\lfloor nt_2 \rfloor -1}\left< D^2f^{(3)}(\hat{X}_{\frac jn}), \partial_{\frac jn}^{\otimes 3}\right>_{\hten^{\otimes 2}} \right)\right)^2\right] \le C\lfloor nt_2 \rfloor n^{-\frac 15\lambda}; \eeq
and
\beq{L3e3}{\mathbb E}\left[\left( \sum_{j=0}^{\lfloor nt_2 \rfloor -1}\left< D^3f^{(3)}(\hat{X}_{\frac jn}), \partial_{\frac jn}^{\otimes 3}\right>_\hten \right)^2\right] \le C\lfloor nt_2 \rfloor n^{-2}.\eeq

\noindent{\em Proof of \req{L3e1}}.  By \req{intmult} we have
\begin{align*}
&{\mathbb E}\left[\left( \delta^2\left(\sum_{j=\lfloor nt_1 \rfloor}^{\lfloor nt_2 \rfloor -1}\left< Df^{(3)}(\hat{X}_{\frac jn}), \partial_{\frac jn}^{\otimes 3}\right>_\hten \right)\right)^2\right] \\
&\qquad \le {\mathbb E}\left[ \sum_{j,k=0}^{\lfloor nt_2 \rfloor -1} \left< \left<Df^{(3)}(\hat{X}_{\frac jn}), \partial_{\frac jn}^{\otimes 3}\right>_\hten ,
\left<Df^{(3)}(\hat{X}_{\frac kn}), \partial_{\frac kn}^{\otimes 3}\right>_\hten \right>_{\hten^{\otimes 2}}\right]\\
&\qquad + 2{\mathbb E}\left[ \sum_{j,k=0}^{\lfloor nt_2 \rfloor -1} \left< \left<D^2f^{(3)}(\hat{X}_{\frac jn}), \partial_{\frac jn}^{\otimes 3}\right>_{\hten^{\otimes 2}} ,
\left<D^2f^{(3)}(\hat{X}_{\frac kn}), \partial_{\frac kn}^{\otimes 3}\right>_{\hten^{\otimes 2}} \right>_\hten\right]\\
&\qquad + {\mathbb E}\left[ \sum_{j,k=0}^{\lfloor nt_2 \rfloor -1} \left<D^3f^{(3)}(\hat{X}_{\frac jn}), \partial_{\frac jn}^{\otimes 3}\right>_{\hten^{\otimes 3}}\cdot
\left<D^3f^{(3)}(\hat{X}_{\frac kn}), \partial_{\frac kn}^{\otimes 3}\right>_{\hten^{\otimes 3}}\right]\\
&\le \;  \sup_{0\le j< \lfloor nt_2 \rfloor} {\mathbb E}\left[ f^{(4)}(\hat{X}_{\frac jn})^2\right] 
\sup_{0\le j< \lfloor nt_2 \rfloor}\left< \hat{\varepsilon}_{\frac jn} , \partial_{\frac jn}\right>_\hten^2 \sum_{j,k=0}^{\lfloor nt_2 \rfloor -1}\left< \partial_{\frac jn} , \partial_{\frac kn}\right>_\hten^2 \\
&\quad + \sup_{0\le j< \lfloor nt_2 \rfloor} {\mathbb E}\left[ f^{(5)}(\hat{X}_{\frac jn})^2\right] 
\sup_{0\le j< \lfloor nt_2 \rfloor}\left<\hat{\varepsilon}_{\frac jn} , \partial_{\frac jn}\right>_\hten^4 \sum_{j,k=0}^{\lfloor nt_2 \rfloor -1}\left|\left< \partial_{\frac jn} , \partial_{\frac kn}\right>_\hten\right| \\
&\quad + \sup_{0\le j< \lfloor nt_2 \rfloor} {\mathbb E}\left[ f^{(6)}(\hat{X}_{\frac jn})^2\right] 
\sum_{j,k=0}^{\lfloor nt_2 \rfloor -1}\left|\left< \hat{\varepsilon}_{\frac jn} , \partial_{\frac jn}\right>_\hten^3\right| \left|\left<\hat{\varepsilon}_{\frac kn} , \partial_{\frac kn}\right>_\hten^3\right|.
\end{align*}
By condition (0) and Lemma 3.2.a and 3.2.d,
\begin{align*}
&\sup_{0\le j< \lfloor nt_2 \rfloor} {\mathbb E}\left[ f^{(4)}(\hat{X}_{\frac jn})^2\right] 
\sup_{0\le j< \lfloor nt_2 \rfloor}\left< \hat{\varepsilon}_{\frac jn} , \partial_{\frac jn}\right>_\hten^2 \sum_{j,k=0}^{\lfloor nt_2 \rfloor -1}\left< \partial_{\frac jn} , \partial_{\frac kn}\right>_\hten^2 \le Cn^{-\frac 43}\lfloor nt_2 \rfloor; \; \text{ and}\\
&\sup_{0\le j< \lfloor nt_2 \rfloor} {\mathbb E}\left[ f^{(5)}(\hat{X}_{\frac jn})^2\right] 
\sup_{0\le j< \lfloor nt_2 \rfloor}\left<\hat{\varepsilon}_{\frac jn} , \partial_{\frac jn}\right>_\hten^4 \sum_{j,k=0}^{\lfloor nt_2 \rfloor -1}\left|\left< \partial_{\frac jn} , \partial_{\frac kn}\right>_\hten\right| \le Cn^{-\frac 53}\lfloor nt_2 \rfloor.
\end{align*}
Then by condition (0), Lemma 3.2.a and 3.2.b,
\begin{align*}
&\sup_{0\le j< \lfloor nt_2 \rfloor} {\mathbb E}\left[ f^{(6)}(\hat{X}_{\frac jn})^2\right] 
\sum_{j,k=0}^{\lfloor nt_2 \rfloor -1}\left|\left< \hat{\varepsilon}_{\frac jn} , \partial_{\frac jn}\right>_\hten^3\right| \left|\left<\hat{\varepsilon}_{\frac kn} , \partial_{\frac kn}\right>_\hten^3\right|\\
&\qquad \le C\sup_{0\le j \le\lfloor nt_2 \rfloor} \left<\hat{\varepsilon}_{\frac jn}, \partial_{\frac jn}\right>_\hten^2 \left(\sum_{j=0}^{\lfloor nt_2\rfloor -1} \left<\hat{\varepsilon}_{\frac jn}, \partial_{\frac jn}\right>_\hten^2 \right)^2 \le Cn^{-2}.\end{align*}

\noindent{\em  Proof of \req{L3e2} and \req{L3e3}}.   The same estimates apply for the other terms, since
\begin{align*}
&{\mathbb E}\left[\left( \delta\left(\sum_{j=0}^{\lfloor nt_2 \rfloor -1}\left< D^2f^{(3)}(\hat{X}_{\frac jn}), \partial_{\frac jn}^{\otimes 3}\right>_{\hten^{\otimes 2}} \right)\right)^2\right]\\
&\quad \le \left| {\mathbb E}\left[ \sum_{j,k=0}^{\lfloor nt_2 \rfloor -1} \left< \left<D^2f^{(3)}(\hat{X}_{\frac jn}), \partial_{\frac jn}^{\otimes 3}\right>_{\hten^{\otimes 2}} ,
\left<D^2f^{(3)}(\hat{X}_{\frac kn}), \partial_{\frac kn}^{\otimes 3}\right>_{\hten^{\otimes 2}} \right>_\hten\right]\right|\\
&\qquad + \left|{\mathbb E}\left[ \sum_{j,k=0}^{\lfloor nt_2 \rfloor -1} \left<D^3f^{(3)}(\hat{X}_{\frac jn}), \partial_{\frac jn}^{\otimes 3}\right>_{\hten^{\otimes 3}}\cdot
\left<D^3f^{(3)}(\hat{X}_{\frac kn}), \partial_{\frac kn}^{\otimes 3}\right>_{\hten^{\otimes 3}}\right]\right|
\end{align*}
and \req{L3e3} is bounded in the above computation as well.
\end{proof}

\subsection{Weak convergence of non-trivial part of $3^{rd}$ order term}
We are now ready to apply Theorem 2.3 to the term
$$\sum_{j=0}^{\Nt -1} \delta^3\left( f^{(3)}(\hat{X}_{\frac jn})\partial_{\frac jn}^{\otimes 3}\right).$$
Let $0=t_0 < t_1 < \dots < t_d \le T$ be a finite set of real numbers, and for $i = 1, \dots, d$ define
$$u_n^i = \sum_{j=\lfloor nt_{i-1} \rfloor}^{\lfloor nt_i \rfloor -1} f^{(3)}(\hat{X}_{\frac jn})\partial_{\frac jn}^{\otimes 3};$$
and define the $d-$dimensional vector $F_n = (F_n^1, \dots, F_n^d)$, where each $F_n^i =  \delta^3(u_n^i)$.

To satisfy the CLT conditions, we must deal with terms of the following forms (see Remark 2.4):
\begin{enumerate}
 \item $\left< u_n^i, h \right>_{\hten^{\otimes 3}}$ for $h \in \hten^{\otimes 3}$,
 \item  $\left< u_n^i, DF_n^j \otimes h \right>_{\hten^{\otimes 3}}$ for $h \in \hten^{\otimes 2}$,
 \item $  \left< u_n^i, D^2 F_n^j \otimes h_1 \right>_{\hten^{\otimes 3}}
 +\left<u_n^i, DF_n^j \otimes DF_n^k \otimes h_2\right>_{\hten^{\otimes 3}}$ for $h_1,~h_2 \in \hten$, and
 \item $\left< u_n^i, D^3F_n^i\right>_{\hten^{\otimes 3}} + \left< u_n^i, D^3F_n^j\right>_{\hten^{\otimes 3}}$ $$+ \left< u_n^i, D^2F_n^j \otimes DF_n^k \right>_{\hten^{\otimes 3}} +  \left< u_n^i, DF_n^j \otimes DF_n^k \otimes DF_n^\ell \right>_{\hten^{\otimes 3}}. $$
 \end{enumerate}
We must show that all terms converge to zero {\em except} for the terms
$\left< u_n^i, D^3F_n^i\right>_{\hten^{\otimes 3}}$, $i = 1, \dots, d$, which will converge stably to a Gaussian random vector (Lemma 3.11).

\begin{lemma}For each $i,j,k,\ell = 1, \dots, d$, the following terms vanish in $L^1(\Omega)$ as $n \to \infty$:
	\begin{enumerate}[(a)]
	\item$\left< u_n^i, h\right>_{\hten^{\otimes 3}}$ for each $h \in \hten^{\otimes 3}$.
	\item$\left< u_n^i, DF_n^j\otimes h\right>_{\hten^{\otimes 3}}$ for each $h \in \hten^{\otimes 2}$.
	\item$\left< u_n^i, D^2F_n^j\otimes h\right>_{\hten^{\otimes 3}} + \left< u_n^i, DF_n^j\otimes DF_n^k \otimes h\right>_{\hten^{\otimes 3}}$ for $h \in \hten$.\end{enumerate}\end{lemma}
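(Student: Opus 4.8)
The plan is to prove each of the three items by the same mechanism: expand the inner Malliavin derivatives using the chain rule, reduce every term to a sum over indices $j$ (and $j,k$, $j,k,\ell$) of products of scalar products of the form $\langle \partial_{\frac jn}, \partial_{\frac kn}\rangle_\hten$, $\langle \hat\varepsilon_{\frac jn}, \partial_{\frac kn}\rangle_\hten$, and $\langle \varepsilon_u, \partial_{\frac jn}\rangle_\hten$, and then bound these sums using the estimates of Lemma 3.2. The key observation is that each $u_n^i$ carries the factor $\partial_{\frac jn}^{\otimes 3}$, so every contraction against a derivative of $F_n^j$ or against a fixed element $h$ produces scalar products in $\partial_{\frac jn}$, each of which is of order $n^{-1/3}$ (part (a)) or $n^{-\lambda}$ against a fixed $\varepsilon_u$. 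Since we only need $L^1(\Omega)$ convergence, I will take absolute values inside, pull out the supremum of the smooth coefficients (bounded in every $L^p$ by condition (0) and the Gaussian moment bounds), and reduce to purely deterministic sums of scalar products.

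**Item (a).**
Here $\langle u_n^i, h\rangle_{\hten^{\otimes 3}} = \sum_j f^{(3)}(\hat X_{\frac jn}) \langle \partial_{\frac jn}^{\otimes 3}, h\rangle_{\hten^{\otimes 3}}$. Because $\hten^{\otimes 3}$ is spanned by elementary tensors (Remark 2.5 allows restricting to a total subset), it suffices to treat $h = \varepsilon_{u_1}\otimes\varepsilon_{u_2}\otimes\varepsilon_{u_3}$, giving $\langle\partial_{\frac jn}^{\otimes 3},h\rangle = \prod_{m=1}^3\langle\partial_{\frac jn},\varepsilon_{u_m}\rangle_\hten$. Taking $L^1$ norm and using $|f^{(3)}|$ bounded in $L^2$, I bound $\sum_j \prod_{m=1}^3|\langle\partial_{\frac jn},\varepsilon_{u_m}\rangle_\hten|$. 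Each factor is $O(n^{-\lambda})$ by Lemma 3.2(a), but cruder than needed; the sharp route is to keep one factor as a summable $\ell^1$-type bound via Lemma 3.2(c) (first inequality gives $\sum_j|\langle\varepsilon_u,\partial_{\frac jn}\rangle|\le C$) and estimate the remaining two factors by $\sup_j|\langle\varepsilon_{u_m},\partial_{\frac jn}\rangle|^2 \le Cn^{-2\lambda}\to 0$. This forces the whole expression to $0$.

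**Items (b) and (c).**
These are handled by first computing $DF_n^j$, $D^2F_n^j$, $D^3F_n^j$ explicitly. Using Lemma 2.1(b) to pass the derivative through $\delta^3$, one gets $D^k F_n^j = \sum_{i} c_{k,i}\,\delta^{3-i}(D^{k-i}u_n^j)$, and then the chain rule on $f^{(3)}(\hat X_{\frac jn})$ produces factors $f^{(3+\ell)}(\hat X_{\frac jn})\,\hat\varepsilon_{\frac jn}^{\otimes \ell}$ together with the surviving $\partial_{\frac jn}^{\otimes 3}$. Substituting these into the scalar products of (b) and (c) and expanding multilinearly, every resulting term is a double or triple sum over the partition indices of a product of the three types of scalar products above, with all smooth coefficients bounded uniformly in $L^p$. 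For (b) the leading contribution resembles $\sum_{j,k}\langle\partial_{\frac jn},\partial_{\frac kn}\rangle\,|\langle\partial_{\frac jn},\text{(factor)}\rangle|\cdot|\cdots|$, controlled by the $r=2$ case of Lemma 3.2(d) ($\sum_{j,k}|\langle\partial_{\frac jn},\partial_{\frac kn}\rangle|^2\le C\lfloor nt\rfloor n^{-2/3}$) against powers of $n^{-1/3}$ or $n^{-\lambda}$ from Lemma 3.2(a),(c); for (c) the off-diagonal terms use Lemma 3.2(e) with $r=3$ to ensure summability in $k$ across the two time-blocks. The cancellation structure in (c)—a \emph{sum} of a $D^2F^j$ term and a $DF^j\otimes DF^k$ term—signals that neither is individually negligible but their pairing matches the combinatorics of the integration-by-parts formula, so I would keep them together when extracting the dominant $\langle\partial,\partial\rangle$ contractions.

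**Main obstacle.**
The hard part will be the bookkeeping in item (c): after expanding $D^2F_n^j$ and $DF_n^j\otimes DF_n^k$ via Lemma 2.1(b) and the chain rule, there are many terms of differing contraction type, and one must verify that in \emph{every} term at least one scalar-product factor supplies a genuinely summable (in the partition indices) bound so that the residual powers of $n^{-1/3}$ and $n^{-\lambda}$ drive the total to zero. The exponent $\lambda\in(\tfrac16,\tfrac13]$ is the tight constraint: a naive count gives exponents that just barely fail, so the estimates of Lemma 3.2(c),(e)—which trade one power of the index-sum for a $\max\{1-\theta,2-\gamma\}<1$ growth—are exactly what make the borderline terms vanish. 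I expect the argument to succeed precisely because conditions (ii), (iv), (v) were calibrated to give $\theta>1/2$, $\gamma>1$, and $\lambda>1/6$, leaving a small but strictly positive margin in each bound.
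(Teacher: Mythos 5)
Your treatment of item (a) matches the paper's: reduce to elementary tensors via Remark 2.5, keep one factor summable through Lemma 3.2(c) and crush the other two with the $\sup$ bound $Cn^{-\lambda}$ from Lemma 3.2(a). For items (b) and (c), however, your plan diverges from the paper's proof and contains both a missing idea and a misconception. The paper does \emph{not} expand $DF_n^j$ and $D^2F_n^j$ all the way down to deterministic sums of contractions. Instead it first establishes the two a priori bounds ${\mathbb E}\|DF_n^j\|^2_{\hten}\le C$ and ${\mathbb E}\|D^2F_n^j\|^2_{\hten^{\otimes 2}}\le C$ (these are the only place Lemma 2.1(b),(c) and Lemma 3.2(d) enter), and then applies Cauchy--Schwarz to peel the random factors off as these bounded norms, leaving only the deterministic sums $\sum_m |\langle\partial_{\frac mn},\varepsilon_\tau\rangle_\hten\langle\partial_{\frac mn},\varepsilon_u\rangle_\hten|\le Cn^{-2\lambda}$ and $\sum_m|\langle\partial_{\frac mn},\varepsilon_\tau\rangle_\hten|\le C$ from Lemma 3.2(c), together with $\|\partial_{\frac mn}\|_\hten\le Cn^{-1/6}$. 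This yields $Cn^{-1/6-2\lambda}$ for (b) and $Cn^{-1/6}$ for (c) with no term-by-term bookkeeping. Your proposed full expansion is the strategy the paper reserves for the genuinely harder contractions $\langle u_n^i, D^2F_n^j\otimes DF_n^k\rangle$ and $\langle u_n^i, DF_n^j\otimes DF_n^k\otimes DF_n^\ell\rangle$ of Lemma 3.10 (proved in Section 5); for the present lemma it is unnecessary, and since you have not actually carried out the bookkeeping you describe as "the hard part," your argument for (b) and (c) is a plan rather than a proof.

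The misconception is your claim that in item (c) "neither [term] is individually negligible but their pairing matches the combinatorics of the integration-by-parts formula." This is false: the paper bounds ${\mathbb E}|\langle u_n^i, D^2F_n^j\otimes\varepsilon_\tau\rangle_{\hten^{\otimes 3}}|\le Cn^{-1/6}$ and notes that the same estimate holds for ${\mathbb E}|\langle u_n^i, DF_n^j\otimes DF_n^k\otimes\varepsilon_\tau\rangle_{\hten^{\otimes 3}}|$, so each summand vanishes on its own and no cancellation between them is used or needed. The two terms are grouped in the statement only because they arise together in condition (a) of Theorem 2.3, not because of any delicate cancellation. Relatedly, your remark that "a naive count gives exponents that just barely fail" does not reflect the actual situation: the margins $n^{-2\lambda}$, $n^{-1/6-2\lambda}$, and $n^{-1/6}$ are comfortable once the Cauchy--Schwarz reduction is in place, with $\lambda>1/6$ entering only through Lemma 3.2(c).
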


\begin{proof}
We begin with two estimates that will be needed.  For each $1 \le i \le d$,
\beq{Norm_DF} {\mathbb E}\left\| DF^i_n \right\|^2_\hten < C; \; \text{ and} \eeq
\beq{Norm_D2F} {\mathbb E}\left\| D^2F^i_n \right\|^2_{\hten^{\otimes 2}} < C. \eeq

\medskip
\noindent{\em Proof of \req{Norm_DF}}.  Let $a_i = \lfloor nt_{i-1} \rfloor$ and $b_i = \lfloor nt_i \rfloor$.  By Lemma 2.1.b,
\[ DF_n^i = \delta^3(Du_n^i) + 3\delta^2(u_n^i).\]
Hence, using Lemma 2.1.c,
\begin{align*}{\mathbb E}\left\| DF^i_n \right\|^2_\hten &\le 2\sum_{j,k=a_i}^{b_i -1} {\mathbb E}\left[ \delta^3\left( f^{(4)}(\Xhat)\partial_{\frac jn}^{\otimes 3}\right)\delta^3\left( f^{(4)}(\hat{X}_{\frac kn})\partial_{\frac kn}^{\otimes 3}\right)\right]\left< \hat{\varepsilon}_{\frac jn}, \hat{\varepsilon}_{\frac kn}\right>_\hten\\
&\quad + 18\sum_{j,k=a_i}^{b_i -1} {\mathbb E}\left[ \delta^2\left( f^{(3)}(\Xhat)\partial_{\frac jn}^{\otimes 2}\right)\delta^2\left( f^{(3)}(\hat{X}_{\frac kn})\partial_{\frac kn}^{\otimes 2}\right)\right]\left< \partial_{\frac jn}, \partial_{\frac kn}\right>_\hten\\
&=2\sum_{j,k=a_i}^{b_i -1}\sum_{\ell =0}^3 {\binom 3\ell}^2 ~{\mathbb E}\left[ f^{(7-\ell)}(\Xhat)f^{(7-\ell)}(\hat{X}_{\frac kn})\right]\left< \partial_{\frac jn}, \partial_{\frac kn}\right>_\hten^3\left< \hat{\varepsilon}_{\frac jn}, \hat{\varepsilon}_{\frac kn}\right>_\hten^\ell\\
&\quad + 18\sum_{j,k=a_i}^{b_i -1}\sum_{\ell =0}^2 {\binom 2\ell}^2 ~ {\mathbb E}\left[ f^{(5-\ell)}(\Xhat)f^{(5-\ell)}(\hat{X}_{\frac kn})\right]\left< \partial_{\frac jn}, \partial_{\frac kn}\right>_\hten^3\left< \hat{\varepsilon}_{\frac jn}, \hat{\varepsilon}_{\frac kn}\right>_\hten^\ell\\
&\le C
\end{align*}
by condition (0) and Lemma 3.2.d.  The proof of \req{Norm_D2F} follows the same lines, using Lemma 2.1.b to obtain
\[D^2F_n^i = \delta^3(D^2u_n^i) + 6\delta^2(Du_n^i) + 6\delta(u_n^i).\]

Now for the main proof.  Without loss of generality, we may assume that each $h\in \hten$ is of the form $\varepsilon_\tau$ for some $0\le \tau\le T$ (see Remark 2.5).  Then for (a) we have:
\begin{align*}
{\mathbb E}\left| \left<u_n^i, h\right>_{\hten^{\otimes 3}}\right| &= {\mathbb E}\left|\sum_{m = a_i}^{b_i -1}
\left< f^{(3)}(\hat{X}_{\frac mn})\partial_{\frac mn}^{\otimes 3}, \varepsilon_\tau \otimes \varepsilon_u \otimes \varepsilon_v\right>_{\hten^{\otimes 3}}\right| \\
&\le \sup_{a_i\le m\le b_i}\left\{{\mathbb E}\left| f^{(3)}(\hat{X}_{\frac mn})\right|\right\} \sup_{\tau, m} \left\{\left< \varepsilon_\tau, \partial_{\frac mn}\right>_\hten^2\right\} \sum_{m=a_i}^{b_i-1} \left|\left< \varepsilon_\tau, \partial_{\frac mn}\right>_\hten\right|\\
&\le Cn^{-2\lambda},\end{align*}
where we used Lemma 3.2.a and Lemma 3.2.c.  For (b),
\begin{align*}
{\mathbb E}\left| \left< u_n^i, DF_n^j \otimes \varepsilon_\tau \otimes \varepsilon_u\right>_{\hten^{\otimes 3}}\right|
&\le \sqrt{\sup_{a_i\le m\le b_i}{\mathbb E}\left|f^{(3)}(\hat{X}_{\frac mn})\right|^2}\sum_{m=a_i}^{b_i-1} \left({\mathbb E}\left< \partial_{\frac mn}^{\otimes 3}, DF_n^j \otimes \varepsilon_\tau \otimes \varepsilon_u\right>_{\hten^{\otimes 3}}^2\right)^{\frac 12}\\
&\le C\sup_m \left\| \partial_{\frac mn}\right\|_\hten~ \sqrt{{\mathbb E}\| DF_n^j\|_\hten^2}~  \sum_{m=a_i}^{b_i -1} \left|\left< \partial_{\frac mn}, \varepsilon_\tau\right>_\hten\left< \partial_{\frac mn}, \varepsilon_u\right>_\hten\right|.
\end{align*}
By condition (i), $\| \partial_{\frac mn}\|_\hten \le Cn^{-\frac 16}$, and so by \req{Norm_DF} and Lemma 3.2.c we have an upper bound of $Cn^{-\frac 16-2\lambda}$.  For (c), by similar reasoning along with Lemma 3.2.c and \req{Norm_D2F},
\begin{align*}
{\mathbb E}\left| \left< u_n^i, D^2F_n^j \otimes \varepsilon_\tau\right>_{\hten^{\otimes 3}}\right| 
&\le \sqrt{\sup_{a_i\le m\le b_i}{\mathbb E}\left|f^{(3)}(\hat{X}_{\frac mn})\right|^2} \sum_{m=a_i}^{b_i -1}\left( {\mathbb E}\left< \partial_{\frac mn}^{\otimes 3} , D^2F_n^j \otimes \varepsilon_\tau\right>_{\hten^{\otimes 3}}^2\right)^{\frac 12}\\
&\le C\sup_m \left\| \partial_{\frac mn}\right\|_{\hten} \sqrt{{\mathbb E}\| D^2F_n^j\|_{\hten^{\otimes 2}}^2}\sum_{m=a_i}^{b_i -1} \left|\left< \partial_{\frac mn}, \varepsilon_\tau\right>_\hten\right|\\
&\le Cn^{-\frac 16}.\end{align*}
The estimate is similar for the term ${\mathbb E}\left| \left< u_n^i, DF_n^j \otimes DF_n^k \otimes \varepsilon_t\right>_{\hten^{\otimes 3}}\right|$.
\end{proof}

Now we focus on the terms $\left< u_n^i, D^3F_n^j\right>_{\hten^{\otimes 3}}.$  By Lemma 2.1.b,
$$D^3F_n^j = \delta^3(D^3u_n^j) + 9\delta^2(D^2u_n^j) + 18\delta(Du_n^j) + 6u_n^j,$$
so that $\left< u_n^i, D^3F_n^j\right>_{\hten^{\otimes 3}}$ can be written as,
\begin{align*}
&\sum_{\ell,m}f^{(3)}(\hat{X}_{\frac mn})\delta^3\left(f^{(6)}(\hat{X}_{\frac{\ell}{n}})\partial_{\frac{\ell}{n}}^{\otimes 3}\right)\left<\partial_{\frac mn}, \hat{\varepsilon}_{\frac{\ell}{n}}\right>_\hten^3\\
&\quad +9\sum_{\ell,m}f^{(3)}(\hat{X}_{\frac mn})\delta^2\left(f^{(5)}(\hat{X}_{\frac{\ell}{n}})\partial_{\frac{\ell}{n}}^{\otimes 3}\right)\left<\partial_{\frac mn}, \hat{\varepsilon}_{\frac{\ell}{n}}\right>_\hten^2\left<\partial_{\frac mn}, \partial_{\frac{\ell}{n}}\right>_\hten\\
&\quad +18\sum_{\ell,m}f^{(3)}(\hat{X}_{\frac mn})\delta\left(f^{(4)}(\hat{X}_{\frac{\ell}{n}})\partial_{\frac{\ell}{n}}\right)\left<\partial_{\frac mn}, \hat{\varepsilon}_{\frac{\ell}{n}}\right>_\hten\left<\partial_{\frac mn}, \partial_{\frac{\ell}{n}}\right>_\hten^2 + 6\left< u_n^i, u_n^j\right>_{\hten^{\otimes 3}}\\
&:= A_n(i,j) + B_n(i,j) + C_n(i,j) + 6\left< u_n^i, u_n^j\right>_{\hten^{\otimes 3}},
\end{align*}
where $m, \ell$ are the indices for $u_n^i, D^3F_n^j$, respectively, with $\lfloor nt_{i-1} \rfloor \le m \le \lfloor nt_i \rfloor$, and $\lfloor nt_{j-1} \rfloor \le \ell \le \lfloor nt_j \rfloor$.

\begin{lemma}  For each $1 \le i,j \le d$ we have
\[ \left|\left< u_n^i, D^3F_n^j\right>_{\hten^{\otimes 3}} ~-~ 6~\delta_{ij}  \left< u_n^i, u_n^j\right>_{\hten^{\otimes 3}}\right| \stackrel{{\mathbb P}}{\longrightarrow} 0\]
as $n \to \infty$, where $\delta_{ij}$ is the Kronecker delta.\end{lemma}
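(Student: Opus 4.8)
The plan is to start from the decomposition already recorded before the statement,
$$\left\langle u_n^i, D^3F_n^j\right\rangle_{\hten^{\otimes 3}} = A_n(i,j) + B_n(i,j) + C_n(i,j) + 6\left\langle u_n^i, u_n^j\right\rangle_{\hten^{\otimes 3}},$$
and to prove two things: that $A_n(i,j)$, $B_n(i,j)$, $C_n(i,j)$ each tend to zero in $L^1(\Omega)$ (hence in probability) for \emph{every} pair $i,j$, and that the remaining inner product $\langle u_n^i, u_n^j\rangle_{\hten^{\otimes 3}}$ tends to zero in $L^1(\Omega)$ whenever $i \neq j$. Since $L^1$ convergence implies convergence in probability, this is exactly the claim, the term that survives on the diagonal being $6\langle u_n^i, u_n^i\rangle_{\hten^{\otimes 3}}$. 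Throughout I would use the two uniform facts $\|\partial_{\frac \ell n}\|_\hten^2 \le Cn^{-1/3}$ and $\|\hat\varepsilon_{\frac\ell n}\|_\hten \le C$ (the latter because $\mathbb{E}[X_s^2]=R(s,s)$ is bounded on compacts), together with $\mathbb{E}[f^{(k)}(\hat X_{\frac mn})^2]\le C$ uniformly in $m$, which follows from condition (0) and the bounded variance of $\hat X_{\frac mn}$.

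For each of $A_n$, $B_n$, $C_n$ I would move the absolute value inside the double sum over the indices $(m,\ell)$ and apply Cauchy--Schwarz in $\Omega$ to split off the factor $f^{(3)}(\hat X_{\frac mn})$ from the multiple Skorohod integral. The $L^2$ norm of an integral of the shape $\delta^k\!\left(f^{(p)}(\hat X_{\frac\ell n})\partial_{\frac\ell n}^{\otimes k}\right)$ is controlled by Lemma 2.1.c: every term in that bound carries the common factor $\|\partial_{\frac\ell n}\|_\hten^{2k}$, so one gets $\mathbb{E}[\delta^k(\cdots)^2]\le C\|\partial_{\frac\ell n}\|_\hten^{2k}\le Cn^{-k/3}$, producing the prefactors $n^{-1/2}$, $n^{-1/3}$, $n^{-1/6}$ for $A_n$ ($k=3$), $B_n$ ($k=2$), $C_n$ ($k=1$). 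The residual scalar-product sums are then estimated with Lemma 3.2. The key input is the square-summability bound $\sum_m\langle\varepsilon_u,\partial_{\frac mn}\rangle_\hten^2\le Cn^{-2\lambda}$ of Lemma 3.2.c; combined with the sup bound $n^{-\lambda}$ of Lemma 3.2.a it gives $\sum_m|\langle\varepsilon_u,\partial_{\frac mn}\rangle_\hten|^3\le Cn^{-3\lambda}$, and summing this uniform-in-$\ell$ estimate over the $O(n)$ values of $\ell$ yields $\sum_{m,\ell}|\langle\partial_{\frac mn},\hat\varepsilon_{\frac\ell n}\rangle_\hten|^3\le Cn^{1-3\lambda}$; the mixed sums in $B_n$ and $C_n$ are handled the same way, using Lemma 3.2.d ($\sum_{m,\ell}\langle\partial_{\frac mn},\partial_{\frac\ell n}\rangle_\hten^2\le Cn^{1/3}$). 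Collecting exponents gives $\mathbb{E}|A_n|\le Cn^{1/2-3\lambda}$, $\mathbb{E}|B_n|\le Cn^{1/3-2\lambda}$, $\mathbb{E}|C_n|\le Cn^{1/6-\lambda}$, each tending to zero precisely because $\lambda>1/6$.

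For the off-diagonal inner product I would assume without loss of generality $i<j$, so that $m$ ranges over the block $[a_i,b_i)$ and $\ell$ over the disjoint block $[a_j,b_j)$ with $b_i\le a_j$. Bounding the two $f^{(3)}$ factors by condition (0) and applying Lemma 3.2.e with $r=3$ across these separated blocks yields
$$\mathbb{E}\left|\left\langle u_n^i,u_n^j\right\rangle_{\hten^{\otimes 3}}\right| \le C\sum_{m,\ell}\left|\left\langle\partial_{\frac mn},\partial_{\frac\ell n}\right\rangle_\hten\right|^3 \le C\,(nT)^{\epsilon}\,n^{-1} = Cn^{\epsilon-1},$$
where $\epsilon=\max\{1-\theta,2-\gamma\}<1$, so this term vanishes as well. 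Combining the four estimates proves the stated convergence.

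The main obstacle I anticipate is the sharpness of the scalar-product sums rather than the Malliavin bookkeeping. The naive estimate that replaces every $\langle\partial_{\frac mn},\hat\varepsilon_{\frac\ell n}\rangle_\hten$ by its supremum $n^{-\lambda}$ is too lossy; in the $A_n$ term it would only give decay for $\lambda>1/4$, whereas the theorem must cover all $\lambda>1/6$. The fix is to systematically trade one power of the supremum against the square-summability bound of Lemma 3.2.c, which is exactly what makes the three exponents negative across the whole range of $\lambda$. The separated-block estimate of Lemma 3.2.e is the other essential ingredient, since it is what forces the vanishing of the off-diagonal inner products and hence makes the limiting covariance $\Sigma$ diagonal.
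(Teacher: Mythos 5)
Your proposal is correct and follows essentially the same route as the paper: the same decomposition into $A_n, B_n, C_n$ and the inner product, the same $L^2$ bound $\|\delta^q(g(\hat X_{\frac \ell n})\partial_{\frac \ell n}^{\otimes q})\|_{L^2}\le Cn^{-q/6}$ via Lemma 2.1.c, the same trade of one supremum from Lemma 3.2.a against the square-summability of Lemma 3.2.c (and Lemma 3.2.d for the mixed sums), yielding the identical exponents $n^{1/2-3\lambda}$, $n^{1/3-2\lambda}$, $n^{1/6-\lambda}$, and the same use of Lemma 3.2.e with $r=3$ for the off-diagonal term. (Your exponent $n^{1/2-3\lambda}$ for $A_n$ is in fact the correct one; the paper's displayed $n^{-1/2-3\lambda}$ appears to be a sign typo.)
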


\begin{proof}
We will show that for each $1 \le i,j \le d$
$$\lim_{n \to \infty} {\mathbb E}\,\left|A_n(i,j)\right| = \lim_{n \to \infty} {\mathbb E}\,\left|B_n(i,j)\right|=\lim_{n \to \infty} {\mathbb E}\,\left|C_n(i,j)\right|=0.$$
and moreover, if  $i \neq j$ then
$\lim_{n \to \infty} {\mathbb E}\left|\left< u_n^i, u_n^j \right>_{\hten^{\otimes 3}}\right| =0.$

To begin with, observe that if $g(x)$ is a function satisfying condition (0), then it follows from condition (i) and Lemma 2.1.c that that for $q=1, 2, 3$,
\beq{partial_norm}  \sup_j \left\| \delta^q\left(g(\Xhat)\partial_{\frac jn}^{\otimes q}\right) \right\|_{L^2} \le C\sup_j \left\| \partial_{\frac jn}\right\|^q_\hten \le  Cn^{-\frac q6}.\eeq
For the terms $A_n(i,j), B_n(i,j), C_n(i,j)$ we include the case $i=j$.
We have
\begin{align*}
{\mathbb E}\left| A_n(i,j)\right| &\le \sup_m \left\| f^{(3)}(\hat{X}_{\frac mn})\right\|_{L^2}~\sup_\ell\left\| \delta^3 \left(f^{(6)}(\hat{X}_{\frac{\ell}{n}})\partial_{\frac{\ell}{n}}^{\otimes 3}\right)\right\|_{L^2}~\sup_{\ell, m} \left|\left< \partial_{\frac mn},\hat{\varepsilon}_{\frac{\ell}{n}}\right>_\hten\right| \sum_{\ell, m}\left< \partial_{\frac mn},  \hat{\varepsilon}_{\frac{\ell}{n}}\right>_\hten^2.
\end{align*}
Using condition (0), \req{partial_norm}, and Lemma 3.2.a, respectively, we have
\[{\mathbb E}\left| A_n(i,j)\right| \le Cn^{-\frac 12 -\lambda} \sum_{\ell, m}\left< \partial_{\frac mn},  \hat{\varepsilon}_{\frac{\ell}{n}}\right>_\hten^2,\]
so that Lemma 3.2.c gives ${\mathbb E}\left| A_n(i,j)\right| \le Cn^{-\frac 12-3\lambda}$. 
   Next, using condition (0), \req{partial_norm} and Lemma 3.2.a, 
\begin{align*}
{\mathbb E}\left| B_n(i,j)\right| &\le 9\sup_m \left\| f^{(3)}(\hat{X}_{\frac mn})\right\|_{L^2}~\sup_\ell\left\| \delta^2 \left(f^{(5)}(\hat{X}_{\frac{\ell}{n}})\partial_{\frac{\ell}{n}}^{\otimes 2}\right)\right\|_{L^2}~\sup_{\ell, m} \left< \partial_{\frac mn}, \hat{\varepsilon}_{\frac{\ell}{n}}\right>_\hten^2 \sum_{\ell, m}\left|\left< \partial_{\frac mn}, \partial_{\frac{\ell}{n}}\right>_\hten\right|\\
&\le Cn^{-\frac 13 -2\lambda} \sum_{\ell, m}\left| \left<\partial_{\frac{\ell}{n}}, \partial_{\frac mn}\right>_\hten\right|; \end{align*}
and so by Lemma 3.2.d,
\[{\mathbb E}\left| B_n(i,j)\right| \le Cn^{-\frac 23-2\lambda} \max\{ \lfloor nt_i\rfloor, \lfloor nt_j \rfloor\},\]
which converges to zero since $2\lambda > 1/3$.  Similarly for $C_n(i,j)$ using Lemma 3.2.d, 
\begin{align*}
{\mathbb E}\left| C_n(i,j)\right| &\le 18\sup_m \left\| f^{(3)}(\hat{X}_{\frac mn})\right\|_{L^2}~\sup_\ell\left\| \delta \left(f^{(4)}(\hat{X}_{\frac{\ell}{n}})\partial_{\frac{\ell}{n}}\right)\right\|_{L^2}~\sup_{\ell, m} \left|\left< \partial_{\frac mn}, \hat{\varepsilon}_{\frac{\ell}{n}}\right>_\hten\right| \sum_{\ell, m}\left|\left< \partial_{\frac mn}, \partial_{\frac{\ell}{n}}\right>_\hten^2\right|\\
&\le Cn^{-\frac 16-\lambda}\sum_{\ell, m} \left<\partial_{\frac{\ell}{n}}, \partial_{\frac mn}\right>_\hten^2 \le Cn^{-\frac 56-\lambda}\max\{ \lfloor nt_i\rfloor, \lfloor nt_j \rfloor\} \le Cn^{-\lambda + \frac 16}.
\end{align*}

For the second part, we may assume $i < j$.  Using Lemma 3.2.e,
\begin{align*}
{\mathbb E}\left| \left< u_n^i, u_n^j \right>_{\hten^{\otimes 3}}\right| & \le  \sup_m \left\| f^{(3)}(\hat{X}_{\frac mn})\right\|_{L^2}^2~\sum_{\ell, m}\left| \left< \partial_{\frac mn}, \partial_{\frac{\ell}{n}}\right>_\hten^3\right|\\
&\le Cn^{-1}\lfloor nt_j \rfloor^{\epsilon},
\end{align*}
which converges to zero because $\epsilon < 1$.
\end{proof}

\begin{lemma}  Using the above notation, for each $1 \le i,j,k,l\le d$ we have
\beq{D2F}\lim_{n \to \infty}{\mathbb E}\left[ \left< u_n^i, D^2F_n^j \otimes DF_n^k \right>_{\hten^{\otimes 3}}^2\right] =0, \text{ and}\eeq
\beq{DFDFDF}\lim_{n \to \infty}{\mathbb E}\left[ \left< u_n^i, DF_n^j \otimes DF_n^k \otimes DF_n^\ell \right>_{\hten^{\otimes 3}}^2\right] =0.\eeq
\end{lemma}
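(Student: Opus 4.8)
The plan is to reduce both inner products to explicit multi-index sums of Skorohod integrals and then estimate their second moments using the contraction bounds of Lemma 3.2 together with Lemma 2.1.c. First I would exploit the product structure of $u_n^i=\sum_m f^{(3)}(\hat X_{\frac mn})\partial_{\frac mn}^{\otimes 3}$. Since $\partial_{\frac mn}^{\otimes 3}=\partial_{\frac mn}^{\otimes 2}\otimes\partial_{\frac mn}$, the tensor inner products factor into scalar ones:
$$\left< u_n^i,D^2F_n^j\otimes DF_n^k\right>_{\hten^{\otimes 3}}=\sum_m f^{(3)}(\hat X_{\frac mn})\left<\partial_{\frac mn}^{\otimes 2},D^2F_n^j\right>_{\hten^{\otimes 2}}\left<\partial_{\frac mn},DF_n^k\right>_\hten,$$
and likewise $\left< u_n^i,DF_n^j\otimes DF_n^k\otimes DF_n^\ell\right>_{\hten^{\otimes 3}}=\sum_m f^{(3)}(\hat X_{\frac mn})\prod_{*}\left<\partial_{\frac mn},DF_n^*\right>_\hten$, the product running over $*\in\{j,k,\ell\}$.

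Next I would expand the derivatives with Lemma 2.1.b, exactly as in the proof of Lemma 3.9, writing $DF_n^k=\delta^3(Du_n^k)+3\delta^2(u_n^k)$ and $D^2F_n^j=\delta^3(D^2u_n^j)+6\delta^2(Du_n^j)+6\delta(u_n^j)$. Because $\hat\varepsilon_{\frac{\ell}{n}}$ and $\partial_{\frac{\ell}{n}}$ are deterministic, contracting against $\partial_{\frac mn}$ turns each summand into a sum over $\ell$ of a Skorohod integral $\delta^q\!\left(f^{(\cdot)}(\hat X_{\frac{\ell}{n}})\partial_{\frac{\ell}{n}}^{\otimes q}\right)$ weighted by scalar contraction coefficients; schematically $\left<\partial_{\frac mn},DF_n^k\right>_\hten=\sum_\ell\left<\partial_{\frac mn},\hat\varepsilon_{\frac{\ell}{n}}\right>\delta^3(\cdots)+3\sum_\ell\left<\partial_{\frac mn},\partial_{\frac{\ell}{n}}\right>\delta^2(\cdots)$, while $\left<\partial_{\frac mn}^{\otimes 2},D^2F_n^j\right>$ produces the coefficients $\left<\partial_{\frac mn},\hat\varepsilon_{\frac{\ell}{n}}\right>^2$, $\left<\partial_{\frac mn},\hat\varepsilon_{\frac{\ell}{n}}\right>\left<\partial_{\frac mn},\partial_{\frac{\ell}{n}}\right>$ and $\left<\partial_{\frac mn},\partial_{\frac{\ell}{n}}\right>^2$. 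Each Skorohod integral is controlled in every $L^p$ by Meyer's inequality \req{Meyer} and \req{partial_norm}, contributing a factor $n^{-q/6}$.

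I would then substitute these expansions into ${\mathbb E}\left[\left<\cdots\right>^2\right]$, producing a sum over $m,m'$ and the auxiliary indices. The products of Skorohod integrals would be reduced by the multiplication formula \req{intmult} and estimated through Lemma 2.1.c: the crucial point is that ${\mathbb E}\left[\delta^3(\cdots_\ell)\delta^3(\cdots_{\ell'})\right]$ is governed by contractions $\left<\partial_{\frac{\ell}{n}},\partial_{\frac{\ell'}{n}}\right>$, which forces the primed and unprimed indices to pair and injects the decay of condition (v). The scalar coefficients are then summed with Lemma 3.2: the coefficients $\left<\partial_{\frac mn},\partial_{\frac{\ell}{n}}\right>$ are summable in either index (Lemma 3.2.a, d, e), while the increment-against-point coefficients must be summed in the orientation of Lemma 3.2.c, where $\sum_m\left|\left<\partial_{\frac mn},\hat\varepsilon_{\frac{\ell}{n}}\right>\right|\le C$.

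The hard part will be precisely the $\hat\varepsilon$-direction coefficients coming from the $\delta^3(Du)$ and $\delta^3(D^2u)$ branches. If one bounds $\sum_\ell\left|\left<\partial_{\frac mn},\hat\varepsilon_{\frac{\ell}{n}}\right>\right|$ for fixed $m$ by absolute values, condition (iv) yields the non-summable contribution of order $n^{1-\lambda}m^{\lambda-1}$, and the naive estimate diverges. The resolution is never to sum $\ell$ first in this orientation: one must retain the near-orthogonality of the integrals $\delta^3\!\left(f^{(\cdot)}(\hat X_{\frac{\ell}{n}})\partial_{\frac{\ell}{n}}^{\otimes 3}\right)$ via Lemma 2.1.c, so that only diagonal and near-diagonal pairs $\ell,\ell'$ survive, and then sum the increment index $m$ against the fixed point $\hat\varepsilon_{\frac{\ell}{n}}$ through Lemma 3.2.c. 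Carrying out this bookkeeping so that every surviving term carries a strictly negative net power of $n$—using $\lambda\in(\frac16,\frac13]$, $\gamma>1$ and $\theta>\frac12$—is the delicate point; the all-$\delta^3$ contribution to \req{DFDFDF}, in which three $\hat\varepsilon$ factors share the single index $m$, is the most dangerous term and dictates exactly how much orthogonality gain is required.
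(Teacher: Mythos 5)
Your proposal follows essentially the same route as the paper's proof in Section 5.3: expand $DF_n^k$ and $D^2F_n^j$ via Lemma 2.1.b into sums of Skorohod integrals weighted by the scalar coefficients $\left<\partial_{\frac mn},\hat\varepsilon_{\frac{\ell}{n}}\right>_\hten$ and $\left<\partial_{\frac mn},\partial_{\frac{\ell}{n}}\right>_\hten$, reduce the squared expectation with the multiplication formula \req{intmult} and the duality/orthogonality of Lemma 2.1.c, and sum the coefficients with Lemma 3.2 in the orientation where the increment index is summed against a fixed point. You also correctly identify the genuine danger (the point-index sums of the $\hat\varepsilon$-coefficients diverge, so the crude $L^p$ bound on each Skorohod integral is insufficient and the contraction decay from conditions (iv)--(v) must be retained), which is exactly the power counting the paper carries out via the exponents $\Lambda$, $\Theta$, $\Lambda'$, $\Theta'$.
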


The proof of this lemma is deferred to Section 5.

\medskip
Lemmas 3.8, 3.9 and 3.10 show that condition (a) of Theorem 2.3 is satisfied, and moreover that the only non-trivial terms are of the form $6\left< u_n^i, u_n^i \right>_{\hten^{\otimes 3}}$.  It remains to establish the convergence of these terms to a non-negative random variable $6s_i^2$.  With this result, it follows from Theorem 2.3 that the couple $(X, F_n)$ converges stably to $(X, \zeta)$, where $\zeta = (\zeta_1, \dots, \zeta_d)$ is a vector whose components are conditionally independent Gaussian random variables with mean zero and variance $6s_i^2$.

\begin{lemma}  For each $1 \le i \le d$, conditioned on $X$,
$$\lim_{n\to \infty} \left< u_n^i, u_n^i \right>_{\hten^{\otimes 3}} = s_i^2,$$
where each $s_i^2$ has the form
$$s_i^2 = s(t_i)^2 - s(t_{i-1})^2 = \int_{t_{i-1}}^{t_i} f^{(3)}(X_s)^2 \eta(ds).$$
It follows that on the subinterval $[t_{i-1}, t_i]$ we have the conditional result
\[ 6\left< u_n^i, u_n^i \right>_{\hten^{\otimes 3}} \longrightarrow 6\int_{t_{i-1}}^{t_i} f^{(3)}(X_s)^2 \eta(ds),\]
almost surely as $n \to \infty$, which implies
\beq{ununconv}F_n^i \law \sqrt{6}\int_{t_{i-1}}^{t_i} f^{(3)}(X_s) dB_s,\eeq
where $\{ B_t, t\ge 0\}$ is a Brownian motion, independent of $X$, with variance $\eta(t)$.
\end{lemma}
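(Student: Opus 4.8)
The plan is to prove the stated limit \emph{pathwise}, conditionally on $X$, and then deduce \req{ununconv} from Theorem 2.3. Fix a sample path on which $s\mapsto X_s$ is continuous on $[0,T]$ (this holds almost surely); then $g(s):=f^{(3)}(X_s)^2$ is bounded and uniformly continuous, and by condition (0) all values $f^{(3)}(\hat{X}_{\frac jn})$ are uniformly bounded along the path. Writing $\beta_n(j,k)=\langle\partial_{\frac jn},\partial_{\frac kn}\rangle_\hten$, $a_i=\lfloor nt_{i-1}\rfloor$ and $b_i=\lfloor nt_i\rfloor$, the object to control is the deterministic, path-dependent double sum
\[ \langle u_n^i,u_n^i\rangle_{\hten^{\otimes 3}}=\sum_{j,k=a_i}^{b_i-1} f^{(3)}(\hat{X}_{\frac jn})\,f^{(3)}(\hat{X}_{\frac kn})\,\beta_n(j,k)^3. \]
Condition (vi) supplies only the \emph{unweighted} limit $\sum_{j,k}\beta_n(j,k)^3\to\eta(t)$, so the whole difficulty is to pass from this to the weighted limit $\int_{t_{i-1}}^{t_i}g(s)\,\eta(ds)$. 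I would do this by a Riemann--Stieltjes argument in which the diagonal concentration of the kernel $\beta_n(j,k)^3$ lets me freeze the weight on short sub-intervals.

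Introduce a partition $t_{i-1}=\tau_0<\cdots<\tau_P=t_i$ of mesh $\delta$, with $B_p$ the block of indices $j$ satisfying $\tau_{p-1}\le j/n<\tau_p$. For \emph{fixed} $\delta$ I would first show
\[ \lim_{n\to\infty}\Bigl|\langle u_n^i,u_n^i\rangle_{\hten^{\otimes 3}}-\sum_{p=1}^P g(\tau_{p-1})\sum_{j,k\in B_p}\beta_n(j,k)^3\Bigr|\le C\,\omega(\delta)\,(t_i-t_{i-1}), \]
where $\omega$ is the modulus of continuity of $s\mapsto f^{(3)}(X_s)$. This splits into two pieces. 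The cross-block terms ($j,k$ in different sub-blocks) are estimated by Lemma 3.2.e with $r=3$: for fixed $p$ the sum over $k$ beyond $B_p$ is at most $C(t_i-\tau_p)^\epsilon n^{\epsilon-1}$, and summing over the $P$ blocks gives a bound of order $P\,n^{\epsilon-1}\to0$, since $\epsilon=\max\{1-\theta,2-\gamma\}<1$. The within-block error, from replacing $f^{(3)}(\hat{X}_{\frac jn})f^{(3)}(\hat{X}_{\frac kn})$ by $g(\tau_{p-1})$ when $j,k\in B_p$, is at most $C\,\omega(\delta+1/n)\sum_{j,k\in B_p}|\beta_n(j,k)|^3$, and Lemma 3.2.d bounds $\sum_{j,k\in B_p}|\beta_n(j,k)|^3\le C(\tau_p-\tau_{p-1})$; summing over $p$ and letting $n\to\infty$ yields the displayed bound.

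Next, for each fixed sub-block, condition (vi) applied at $\tau_p$ and $\tau_{p-1}$ together with the vanishing (again by Lemma 3.2.e) of the cross term between $[0,\tau_{p-1})$ and $[\tau_{p-1},\tau_p)$ gives $\sum_{j,k\in B_p}\beta_n(j,k)^3\to\eta(\tau_p)-\eta(\tau_{p-1})$ as $n\to\infty$. Combining the last two steps,
\[ \limsup_{n\to\infty}\Bigl|\langle u_n^i,u_n^i\rangle_{\hten^{\otimes 3}}-\sum_{p=1}^P g(\tau_{p-1})\bigl(\eta(\tau_p)-\eta(\tau_{p-1})\bigr)\Bigr|\le C\,\omega(\delta)\,(t_i-t_{i-1}). \]
Letting $\delta\to0$, the Riemann--Stieltjes sums converge to $\int_{t_{i-1}}^{t_i}g(s)\,\eta(ds)$ (by continuity of $g$ and continuity and monotonicity of $\eta$) while $\omega(\delta)\to0$; this gives the almost sure conditional limit $\langle u_n^i,u_n^i\rangle_{\hten^{\otimes 3}}\to s_i^2=\int_{t_{i-1}}^{t_i}f^{(3)}(X_s)^2\,\eta(ds)$.

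With this in hand, Lemmas 3.8--3.10 verify condition (a) of Theorem 2.3 and show that all off-diagonal terms $\langle u_n^i,u_n^j\rangle$ with $i\neq j$ vanish, while the present lemma gives $6\langle u_n^i,u_n^i\rangle\to 6s_i^2$; hence condition (b) holds with $\Sigma$ diagonal and entries $6s_i^2$. Theorem 2.3 then yields stable convergence of $(X,F_n)$ to $(X,\zeta)$ with $\zeta$ conditionally $\mathcal N(0,\Sigma)$ given $X$, and since $\int_{t_{i-1}}^{t_i}f^{(3)}(X_s)\,dB_s$ has conditional variance $\int_{t_{i-1}}^{t_i}f^{(3)}(X_s)^2\,\eta(ds)=s_i^2$, the representation \req{ununconv} follows. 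The step I expect to be the main obstacle is the interchange of the limits $n\to\infty$ and $\delta\to0$: everything rests on controlling the off-diagonal and cross-block contributions \emph{uniformly in $n$}, which is exactly what the absolute summability of Lemma 3.2.d and the cross-block decay of Lemma 3.2.e (valid because $\gamma>1$ in condition (v)) provide; without this one cannot upgrade the unweighted convergence of condition (vi) to the weighted limit.
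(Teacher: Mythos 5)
Your proof is correct and follows essentially the same route as the paper: both arguments rest on condition (vi) for the unweighted diagonal limit, Lemma 3.2.e to kill the cross terms between disjoint index blocks, Lemma 3.2.d for absolute summability, and continuity of $f^{(3)}(X_\cdot)$ to pass to the weighted integral, finishing with Theorem 2.3 and the It\^o isometry. The only difference is presentational: the paper packages the final step as weak convergence of the discrete measures $\mu_n=\sum_{j,k}\beta_n(j,k)^3\delta_{jk}$ to the diagonal measure $\mu([0,s]\times[0,t])=\eta(s\wedge t)$ followed by the Portmanteau theorem, whereas you carry out that same step by hand with an explicit partition-and-freeze Riemann--Stieltjes argument.
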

\begin{proof}
Let $a = \lfloor nt_{i-1} \rfloor$ and $b=\lfloor nt_i \rfloor$, and recall $\beta_n(j,k) = \left< \partial_{\frac jn}, \partial_{\frac kn}\right>_\hten$, from condition (vi).  We have
$$\left< u_n^i, u_n^i \right>_{\hten^{\otimes 3}} = \sum_{j,k=a}^{b-1} f^{(3)}(\Xhat)f^{(3)}(\hat{X}_{\frac kn}) \beta_n(j,k)^3.$$
For each $n$, define a discrete measure on $\{ 1, 2, \dots \}^{\otimes 2}$ by
$$\mu_n := \sum_{j,k=0}^\infty \beta_n(j,k)^3 \delta_{jk},$$
where $\delta_{jk}$ denotes the Kronecker delta.  It follows from condition (vi) that for each $t >0$,
\begin{align*}
\mu\left([0,t]^2\right) &:= \lim_{n \to \infty} \mu_n\left( \Nt , \Nt \right)\\
& = \lim_n \sum_{j,k=0}^{\Nt -1} \beta_n(j,k)^3 = \eta(t).\end{align*}
Moreover, if $0 < s < t$ then
$$\mu_n\left( \lfloor ns \rfloor, \Nt\right) = \mu_n\left( \lfloor ns \rfloor, \lfloor ns \rfloor\right)+\sum_{j=0}^{\lfloor ns \rfloor -1}~\sum_{k= \lfloor ns \rfloor}^{\Nt -1} \beta_n(j,k)^3,$$
which converges to zero because the disjoint sum vanishes by Lemma 3.2.e.  Hence we can conclude that $\mu_n$ converges weakly to the measure given by $\mu ([0,s]\times [0,t]) = \eta (s \wedge t)$.  It follows by continuity of $f^{(3)}(X_t)$ and Portmanteau theorem that
$$ \sum_{j,k=0}^{\Nt -1} f^{(3)}(\Xhat) f^{(3)}(\hat{X}_{\frac kn})  \beta_n(j,k)^3 
=\int_{{\mathbb R}^2} f^{(3)}(X_s) f^{(3)}(X_u) {\mathbf 1}_{s<t} {\mathbf 1}_{u<t} ~\mu_n(ds,du)$$
converges in $L^1(\Omega, \hten)$ to
$$\int_0^t f^{(3)}(X_s)^2 \eta(ds).$$ 
It follows that on the subinterval $[t_{i-1}, t_i]$ we have the result
$$\left< u_n^i , u_n^i \right>_{\hten^{\otimes 3}} \longrightarrow \int_{t_{i-1}}^{t_i} f^{(3)}(X_s)^2 \eta(ds)$$
in $L^1(\Omega, \hten)$ as $n \to \infty$.  Using the It\^o isometry for the above integral, we conclude \req{ununconv}.
\end{proof}

\subsection{Relative compactness of the sequence $F_n(t) + \Delta_n(t)$}
Let
$$G_n(t) := \frac 12 \sum_{j=0}^{\Nt -1}\left( f'(X_{\frac{j+1}{n}}) + f'(X_{\frac jn})\right)\Delta X_{\frac jn}.$$
To establish convergence of $G_n(t)$ in ${\mathbf D}[0,\infty)$, we need to show that $\{ G_n(t)\}$ is relatively compact.  For this, it is enough to show that there exist real numbers $\alpha > 0$, $\beta > 1$ such that for each $T > 0$ and any $0 \le t_1 < t < t_2\le T$ we have,
$${\mathbb E}\left[ \left| G_n(t) - G_n(t_1)\right|^\alpha \left| G_n(t_2) - G_n(t)\right|^\alpha\right] \le C\left( \frac{\lfloor nt_2\rfloor - \lfloor nt_1 \rfloor}{n}\right)^\beta.$$
We will do this in several parts.  From the preceding section we have,
\begin{align*}
G_n(t) &= f(X_{\frac{\Nt}{n}}) - f(X_0) + \frac{1}{12}\sum_{j=0}^{\Nt -1} \delta^3\left( f^{(3)}(\Xhat) \partial^{\otimes 3}_{\frac jn}\right) + \frac{4}{2^55!}\sum_{j=0}^{\Nt -1} f^{(5)}(\Xhat)\Delta X_{\frac jn}^5\\
&\quad - Z_n(t) +\frac{3}{12} \sum_{j=0}^{\Nt -1}\left\| \Delta X_{\frac jn}\right\|^2_{L^2} f^{(3)}(\Xhat)\delta(\partial_{\frac jn}) + P_n(t)\\
&= f(X_{\frac{\Nt}{n}}) - f(X_0) + \frac{1}{12}\sum_{j=0}^{\Nt -1} \delta^3\left( f^{(3)}(\Xhat) \partial^{\otimes 3}_{\frac jn}\right) +\Delta_n(t).\end{align*}
By Lemmas 3.3, 3.4, and 3.7 we have
\begin{align*}
{\mathbb E}\left[ \left( \sum_{j=\lfloor nt_1 \rfloor}^{\lfloor nt_2 \rfloor -1} f^{(5)}(\Xhat)\Delta X_{\frac jn}^5\right)^2 \right] &\le Cn^{-\frac 43} \left(\lfloor nt_2\rfloor - \lfloor nt_1\rfloor\right);\\
 {\mathbb E}\left[ \left(Z_n(t_2) - Z_n(t_1)\right)^2\right] &\le Cn^{-\frac 73}\left( \lfloor nt_2 \rfloor - \lfloor nt_1 \rfloor\right)^2;\text{ and }\\
{\mathbb E}\left[P_n(t)^2\right]&\le Cn^{-\frac 43}\left( \lfloor nt_2 \rfloor - \lfloor nt_1 \rfloor\right).\end{align*}
Each of these estimates has the form
$${\mathbb E}\left[ \left(U_n(t_2) - U_n(t_1)\right)^2\right] \le Cn^{-\beta}\left(\lfloor nt_2 \rfloor -\lfloor nt_1 \rfloor\right)^{\zeta} \le C\left(\frac{\lfloor nt_2 \rfloor -\lfloor nt_1 \rfloor}{n}\right)^{\beta},$$
where $\zeta < \beta$ and $\beta >1$, hence it follows by Cauchy-Schwarz that for $t_1 < t < t_2$ we have
$${\mathbb E}\left[ |U_n(t)-U_n(t_1)|~|U_n(t_2) - U_n(t)|\right] \le C\left(\frac{\lfloor nt_2 \rfloor- \lfloor nt_1 \rfloor}{n}\right)^{\beta},$$
so each of these individual sequences is relatively compact.  For the term,
$$Y_n(t) = \sum_{j=0}^{\Nt -1}\left\| \Delta X_{\frac jn}\right\|^2_{L^2} f^{(3)}(\Xhat)\delta(\partial_{\frac jn}),$$ we have by Lemma 3.6 that $Y_n(t)$ vanishes in probability.  However, to show relative compactness we need a different estimate.

\begin{lemma}For $0 \le t_1 < t_2 \le T$ such that $\lfloor nt_2 \rfloor- \lfloor nt_1 \rfloor\ge 1$, we have
$${\mathbb E}\left[ \left(Y_n(t_2)-Y_n(t_1)\right)^4\right] \le Cn^{-2}\left(\lfloor nt_2 \rfloor- \lfloor nt_1 \rfloor\right)^2 + 
Cn^{-\frac 43-4\lambda}\left(\lfloor nt_2 \rfloor- \lfloor nt_1 \rfloor\right)^{\frac 43+4\lambda}.$$It follows that the sequence $\left\{ Y_n(t)\right\}$ is relatively compact.\end{lemma}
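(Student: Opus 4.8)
The plan is to isolate the genuinely stochastic part of the increment as a single Skorohod integral and then control its fourth moment through the Meyer inequality \req{Meyer}, reducing everything to the scalar-product estimates of Lemma 3.2. Since the constants in Lemma 3.2 do not depend on the endpoints of the summation, I set $a=\lfloor nt_1\rfloor$, $b=\lfloor nt_2\rfloor$ and work with $Y_n(t_2)-Y_n(t_1)=\sum_{j=a}^{b-1}\|\Delta X_{\frac jn}\|_{L^2}^2 f^{(3)}(\hat{X}_{\frac jn})\,\delta(\partial_{\frac jn})$. Applying Lemma 2.1.a with $q=1$ to each summand gives $f^{(3)}(\hat{X}_{\frac jn})\delta(\partial_{\frac jn})=\delta\bigl(f^{(3)}(\hat{X}_{\frac jn})\partial_{\frac jn}\bigr)+f^{(4)}(\hat{X}_{\frac jn})\langle\hat{\varepsilon}_{\frac jn},\partial_{\frac jn}\rangle_\hten$, which splits the increment as $\delta(v_n)+W_n$, where $v_n=\sum_{j=a}^{b-1}c_j f^{(3)}(\hat{X}_{\frac jn})\partial_{\frac jn}$, $W_n=\sum_{j=a}^{b-1}c_j f^{(4)}(\hat{X}_{\frac jn})\langle\hat{\varepsilon}_{\frac jn},\partial_{\frac jn}\rangle_\hten$, and $c_j=\|\Delta X_{\frac jn}\|_{L^2}^2=\langle\partial_{\frac jn},\partial_{\frac jn}\rangle_\hten\le Cn^{-1/3}$ by condition (i). It then suffices to bound $\mathbb E[\delta(v_n)^4]$ and $\mathbb E[W_n^4]$ separately.

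The term $W_n$ carries no divergence, so I would expand $W_n^4$ directly. Each of the four $f^{(4)}(\hat{X}_{\frac jn})$ factors has all moments bounded uniformly in $j$ (condition (0), with $\hat{X}_{\frac jn}$ Gaussian of bounded variance on $[0,T]$), so by generalized H\"older the expectation of their product is bounded by a constant, leaving $\mathbb E[W_n^4]\le Cn^{-4/3}\bigl(\sum_{j=a}^{b-1}|\langle\hat{\varepsilon}_{\frac jn},\partial_{\frac jn}\rangle_\hten|\bigr)^4$. Lemma 3.2.b bounds the inner sum by $Cn^{-1/3}(b-a)^{1-\theta}$, giving $\mathbb E[W_n^4]\le Cn^{-8/3}(b-a)^{4(1-\theta)}$; since $\theta>1/2$ and $b-a\ge1$, this is dominated by $Cn^{-2}(b-a)^2$.

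For the divergence term I would invoke \req{Meyer} with $q=1$, $k=1$, $p=4$: since $v_n\in\mathbb D^{1,4}(\hten)$, we get $\mathbb E[|\delta(v_n)|^4]\le C\|v_n\|_{\mathbb D^{1,4}(\hten)}^4=C\bigl(\mathbb E\|v_n\|_\hten^4+\mathbb E\|Dv_n\|_{\hten^{\otimes2}}^4\bigr)$. Writing $\|v_n\|_\hten^2=\sum_{j,k}c_jc_k f^{(3)}(\hat{X}_{\frac jn})f^{(3)}(\hat{X}_{\frac kn})\langle\partial_{\frac jn},\partial_{\frac kn}\rangle_\hten$ and squaring, the four $f^{(3)}$ factors again contribute a bounded constant, the $c$'s contribute $Cn^{-4/3}$, and what remains is $(\sum_{j,k}|\langle\partial_{\frac jn},\partial_{\frac kn}\rangle_\hten|)^2$, which Lemma 3.2.d with $r=1$ bounds by $C(b-a)^2n^{-2/3}$; hence $\mathbb E\|v_n\|_\hten^4\le Cn^{-2}(b-a)^2$. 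Since $Dv_n=\sum_j c_j f^{(4)}(\hat{X}_{\frac jn})\,\hat{\varepsilon}_{\frac jn}\otimes\partial_{\frac jn}$, the norm $\|Dv_n\|_{\hten^{\otimes2}}^2$ contracts into $\langle\hat{\varepsilon}_{\frac jn},\hat{\varepsilon}_{\frac kn}\rangle_\hten\langle\partial_{\frac jn},\partial_{\frac kn}\rangle_\hten$; bounding the covariance factor $\langle\hat{\varepsilon}_{\frac jn},\hat{\varepsilon}_{\frac kn}\rangle_\hten$ by a constant (continuity of $R$ on $[0,T]^2$) and repeating the previous estimate with Lemma 3.2.d yields $\mathbb E\|Dv_n\|_{\hten^{\otimes2}}^4\le Cn^{-2}(b-a)^2$ as well.

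Combining the three estimates gives $\mathbb E[(Y_n(t_2)-Y_n(t_1))^4]\le Cn^{-2}(b-a)^2$, which a fortiori implies the stated bound (the two expressions are in fact comparable, since $4\lambda-\tfrac23>0$ and $(b-a)/n\le T$ force $n^{-\frac43-4\lambda}(b-a)^{\frac43+4\lambda}\le Cn^{-2}(b-a)^2$). Relative compactness then follows from the tightness criterion of Corollary 2.6: taking $\gamma=2$, Cauchy-Schwarz together with the fourth-moment bound gives $\mathbb E[|Y_n(t)-Y_n(t_1)|^2|Y_n(t_2)-Y_n(t)|^2]\le Cn^{-2}(\lfloor nt_2\rfloor-\lfloor nt_1\rfloor)^2=C\bigl(\tfrac{\lfloor nt_2\rfloor-\lfloor nt_1\rfloor}{n}\bigr)^2$, of the required form with $\beta=2>1$. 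I expect the crux of the argument to be the fourth moment of the Skorohod integral $\delta(v_n)$, and in particular the contribution of $Dv_n$; the Meyer inequality is the device that turns this into the two routine $\hten$-norm estimates above, sidestepping the lengthy direct moment expansion that would otherwise generate the $\langle\hat{\varepsilon}_{\frac jn},\partial_{\frac kn}\rangle_\hten$ cross-contractions responsible for the $\lambda$-dependent term in the statement.
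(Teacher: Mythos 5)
Your argument is correct, but it takes a genuinely different route from the paper's. The paper bounds ${\mathbb E}\left[(Y_n(t_2)-Y_n(t_1))^4\right]$ by a direct fourth-moment expansion: it pulls out $\sup_j\|\Delta X_{\frac jn}\|_{L^2}^8\le Cn^{-\frac 43}$ and then repeatedly applies the duality \req{Duality} to peel off the four factors $\delta(\partial_{\frac{j_i}{n}})$ one at a time, generating a family of terms with mixed contractions $\left<\hat{\varepsilon}_{\frac{j_a}{n}},\partial_{\frac{j_i}{n}}\right>_\hten$ and $\left<\partial_{\frac{j_i}{n}},\partial_{\frac{j_k}{n}}\right>_\hten$; the fully $\hat{\varepsilon}$-contracted terms are what produce the second, $\lambda$-dependent summand in the statement. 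You instead split each summand via Lemma 2.1.a into $\delta\left(f^{(3)}(\Xhat)\partial_{\frac jn}\right)$ plus the trace term $f^{(4)}(\Xhat)\left<\hat{\varepsilon}_{\frac jn},\partial_{\frac jn}\right>_\hten$, handle the deterministic-contraction part $W_n$ by hand with Lemma 3.2.b, and control the divergence part with the Meyer inequality \req{Meyer} at $p=4$, reducing everything to ${\mathbb E}\|v_n\|_\hten^4$ and ${\mathbb E}\|Dv_n\|_{\hten^{\otimes 2}}^4$ and hence to Lemma 3.2.d. This is exactly the device the paper itself uses for the tightness of $F_n$ in Lemma 3.13, so it is consistent with the paper's toolbox; it is shorter, and it yields the slightly sharper single-term bound $Cn^{-2}(\lfloor nt_2\rfloor-\lfloor nt_1\rfloor)^2$, which implies the stated two-term bound (your observation that the $\lambda$-term is dominated, since $4\lambda-\frac 23>0$ and $(\lfloor nt_2\rfloor-\lfloor nt_1\rfloor)/n\le T$, is correct — and in any case adding a nonnegative term only weakens the claim). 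The only point worth making explicit is the degenerate case $\lfloor nt\rfloor=\lfloor nt_1\rfloor$ or $\lfloor nt\rfloor=\lfloor nt_2\rfloor$ in the final Cauchy--Schwarz step, where one factor of the product vanishes identically so the tightness inequality holds trivially; this is the standard caveat in applying the criterion of Corollary 2.6 and the paper glosses over it as well.
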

\begin{proof}
Let $\Phi_n := \Phi_n(j_1,j_2,j_3,j_4) =\prod_{i=1}^4 f^{(3)}(\hat{X}_{\frac{j_i}{n}})$, and let $a = \lfloor nt_1\rfloor,$ $b = \lfloor nt_2\rfloor$.  We have
\begin{align*}
{\mathbb E}\left[ \left( \sum_{j=a}^{b-1} \left\|\Delta X_{\frac jn}\right\|^2_{L^2} f^{(3)}(\Xhat) \delta(\partial_{\frac jn})\right)^4\right]
& \le \sup_{a \le j < b} \left\|\Delta X_{\frac jn}\right\|^8_{L^2}\sum_{j_1,j_2,j_3,j_3}\left|{\mathbb E}\left[ \Phi_n(j_1,j_2,j_3,j_4) \delta(\partial_{\frac{j_1}{n}})\delta(\partial_{\frac{j_2}{n}})\delta(\partial_{\frac{j_3}{n}})\delta(\partial_{\frac{j_4}{n}})\right]\right|\\
& \le Cn^{-\frac 43}\sum_{j_1,j_2,j_3,j_4}\left|{\mathbb E}\left[ \left<D\left[\Phi_n \delta(\partial_{\frac{j_1}{n}})\delta(\partial_{\frac{j_2}{n}})\delta(\partial_{\frac{j_3}{n}})\right],\partial_{\frac{j_4}{n}}\right>_\hten\right]\right|\\
&\le Cn^{-\frac 43}\sum_{j_1,j_2,j_3,j_4} \sum_{r=1}^4 \left|{\mathbb E}\left[\Phi_n^{(r)} \delta(\partial_{\frac{j_1}{n}})\delta(\partial_{\frac{j_2}{n}})\delta(\partial_{\frac{j_3}{n}})\right]\left< \hat{\varepsilon}_{\frac{j_r}{n}},\partial_{\frac{j_4}{n}}\right>_\hten\right|\\
&\quad + 3 Cn^{-\frac 43}\sum_{j_1,j_2,j_3,j_4}\left|{\mathbb E}\left[\Phi_n \delta(\partial_{\frac{j_1}{n}})\delta(\partial_{\frac{j_2}{n}})\right]\left< \partial_{\frac{j_3}{n}}, \partial_{\frac{j_4}{n}}\right>_\hten\right|\\
&= Cn^{-\frac 43}\sum_{j_1,j_2,j_3,j_4} \sum_{r=1}^4 \left|{\mathbb E}\left[\left< D\left[\Phi_n^{(r)}\delta(\partial_{\frac{j_1}{n}})\delta(\partial_{\frac{j_2}{n}})\right], \partial_{\frac{j_3}{n}}\right>_\hten\right]\left< \hat{\varepsilon}_{\frac{j_r}{n}},\partial_{\frac{j_4}{n}}\right>_\hten\right|\\
&\quad + 3 Cn^{-\frac 43}\sum_{j_1,j_2,j_3,j_4}\left|{\mathbb E}\left[\left<D\left[\Phi_n \delta(\partial_{\frac{j_1}{n}})\right],\partial_{\frac{j_2}{n}}\right>_\hten\right]\left< \partial_{\frac{j_3}{n}}, \partial_{\frac{j_4}{n}}\right>_\hten\right|\\
\end{align*}
where 
\[ \Phi_n^{(r)} = f^{(4)}(\hat{X}_{\frac{j_r}{n}})\prod_{\stackrel{i=1}{i\neq r}}^4 f^{(3)}(\hat{X}_{\frac{j_i}{n}}).\]
Continuing this process, we obtain terms of the form:
\begin{align*}
&Cn^{-\frac 43}\sum_{j_1,j_2,j_3,j_4}\left|{\mathbb E}\left[\Phi\right]\left< \partial_{\frac{j_1}{n}}, \partial_{\frac{j_2}{n}}\right>_\hten\left< \partial_{\frac{j_3}{n}}, \partial_{\frac{j_4}{n}}\right>_\hten\right|,\\
&Cn^{-\frac 43}\sum_{j_1,j_2,j_3,j_4}\left|{\mathbb E}\left[\partial^2\Phi\right]\left< \hat{\varepsilon}_{\frac{j_a}{n}}, \partial_{\frac{j_1}{n}}\right>_\hten\left< \hat{\varepsilon}_{\frac{j_b}{n}}, \partial_{\frac{j_2}{n}}\right>_\hten\left< \partial_{\frac{j_3}{n}}, \partial_{\frac{j_4}{n}}\right>_\hten\right|, \;\text{ and }\\
&Cn^{-\frac 43}\sum_{j_1,j_2,j_3,j_4}\left|{\mathbb E}\left[\partial^4\Phi\right]\left< \hat{\varepsilon}_{\frac{j_a}{n}}, \partial_{\frac{j_1}{n}}\right>_\hten\left< \hat{\varepsilon}_{\frac{j_b}{n}}, \partial_{\frac{j_2}{n}}\right>_\hten\left< \hat{\varepsilon}_{\frac{j_c}{n}}, \partial_{\frac{j_3}{n}}\right>_\hten\left< \hat{\varepsilon}_{\frac{j_d}{n}}, \partial_{\frac{j_4}{n}}\right>_\hten\right|,
\end{align*}
where $\partial^k\Phi$ represents the appropriate $k^{th}$ derivative of $\Phi$. By Lemma 3.2.c and 3.2.d, the sums of each type have, respectively, upper bounds of the form
$$Cn^{-2}(b-a)^{-2} + Cn^{-\frac 53}(b-a) + Cn^{-\frac 43-4\lambda}(b-a)^{4\lambda},$$
hence we conclude that
$${\mathbb E}\left[\left( Y_n(t_2) - Y_n(t_1)\right)^4\right] \le C\left(n^{-2}(\lfloor nt_2\rfloor -\lfloor nt_1\rfloor)^2 + n^{-\frac 43-4\lambda} (\lfloor nt_2\rfloor -\lfloor nt_1\rfloor)^{\frac 43+4\lambda}\right),$$
As for above terms, it follows by Cauchy-Schwarz that
\begin{align*}{\mathbb E}\left[\left|Y_n(t) - Y_n(t_1)\right|^2\left|Y_n(t_2) - Y_n(t)\right|^2\right] 
& \le C\left(n^{-2}(\lfloor nt_2\rfloor -\lfloor nt_1\rfloor)^2 + n^{-\frac 43-4\lambda} (\lfloor nt_2\rfloor -\lfloor nt_1\rfloor)^{\frac 43+4\lambda}\right),\end{align*}
and thus $\{ Y_n(t)\}$ is relatively compact.   
\end{proof}

\noindent{\em Tightness of $F_n$}.  

To conclude the proof of Theorem 3.1, we want to show that the sequence $\{F_n(t)\}$ satisfies the relative compactness condition.
\begin{lemma}For $0 \le t_1 < t_2 \le T$, write
$$F_n(t_2) - F_n(t_1) = \sum_{j=\lfloor nt_1 \rfloor +1}^{\lfloor nt_2 \rfloor -1} \delta^3 \left( f^{(3)}(\Xhat) \partial_{\frac jn}^{\otimes 3}\right)$$
Then given $0\le t_1 < t < t_2 \le T$, there exists a positive constant $C$ such that
\beq{tight} {\mathbb E} \left[ |F_n(t) - F_n(t_1) |^2 |F_n(t_2) -  F_n(t) |^2 \right] \le C\left(\frac{\lfloor nt_2 \rfloor - \lfloor nt_1 \rfloor}{n}\right)^2. \eeq
\end{lemma}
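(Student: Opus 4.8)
\noindent\emph{Proof plan.}
The plan is to use the disjointness of the two index ranges and reduce the whole expression to the scalar estimates of Lemma 3.2. Set $a:=\lfloor nt_1\rfloor$, $b:=\lfloor nt\rfloor$ and $c:=\lfloor nt_2\rfloor$, and abbreviate $A:=F_n(t)-F_n(t_1)=\sum_{j=a}^{b-1}\delta^3\bigl(f^{(3)}(\hat{X}_{\frac jn})\partial_{\frac jn}^{\otimes 3}\bigr)$ and $B:=F_n(t_2)-F_n(t)=\sum_{k=b}^{c-1}\delta^3\bigl(f^{(3)}(\hat{X}_{\frac kn})\partial_{\frac kn}^{\otimes 3}\bigr)$ (the case $c=a$ being trivial). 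Expanding the two squares, the left side of \req{tight} becomes $\sum_{j_1,j_2=a}^{b-1}\sum_{j_3,j_4=b}^{c-1}{\mathbb E}\bigl[\prod_{i=1}^4\delta^3(f^{(3)}(\hat{X}_{\frac{j_i}{n}})\partial_{\frac{j_i}{n}}^{\otimes 3})\bigr]$, a sum over four indices, with $j_1,j_2\in[a,b)$ and $j_3,j_4\in[b,c)$, of an expectation of a product of four triple Skorohod integrals.

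Next I would reduce each such fourth-order expectation to a sum of scalar terms, exactly as in the proof of Lemma 3.12: apply Lemma 2.1.a repeatedly to pull the scalar factors $f^{(3)}(\hat{X}_{\frac{j_i}{n}})$ inside the divergences and lower the divergence orders one step at a time. At each step a Malliavin derivative either lands on a coefficient $f^{(p)}(\hat{X}_{\frac{j_i}{n}})$, producing a factor $\hat{\varepsilon}_{\frac{j_i}{n}}$, or contracts two kernels. Since ${\mathbb E}[\delta^m(\cdot)]=0$ for every $m\ge 1$, only the completely contracted terms survive the expectation. Each surviving term equals a bounded mixed moment of derivatives of $f$ (finite by condition (0)) times a product of inner products of the two types $\langle\partial_{\frac{j_p}{n}},\partial_{\frac{j_q}{n}}\rangle_\hten$ and $\langle\hat{\varepsilon}_{\frac{j_p}{n}},\partial_{\frac{j_q}{n}}\rangle_\hten$, obtained by contracting the twelve kernel legs of $\partial_{\frac{j_1}{n}}^{\otimes 3},\dots,\partial_{\frac{j_4}{n}}^{\otimes 3}$ pairwise -- each leg either against another kernel leg or against one of the $\hat{\varepsilon}$ factors created by the coefficient derivatives.

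Finally I would classify the terms by how the contraction pattern couples the groups $\{j_1,j_2\}$ and $\{j_3,j_4\}$. The dominant contribution is the ``separated'' pattern, in which the three legs of $j_1$ pair with those of $j_2$ and the three legs of $j_3$ with those of $j_4$ with no coefficient derivatives, giving $\langle\partial_{\frac{j_1}{n}},\partial_{\frac{j_2}{n}}\rangle_\hten^3\,\langle\partial_{\frac{j_3}{n}},\partial_{\frac{j_4}{n}}\rangle_\hten^3$; summing over the four indices and applying Lemma 3.2.d (with $r=3$) in each group yields a bound $C(b-a)n^{-1}\cdot(c-b)n^{-1}\le C((c-a)/n)^2$, using $(b-a)(c-b)\le(c-a)^2$. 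Every other pattern is strictly smaller: a pattern that differentiates a coefficient trades a $\langle\partial,\partial\rangle_\hten$ factor for a $\langle\hat{\varepsilon},\partial\rangle_\hten$ factor, whose summations carry additional decay through Lemma 3.2.a, 3.2.b and 3.2.c and the inequality $\lambda>1/6$; and a pattern that couples the two index groups must contain at least one cross inner product $\langle\partial_{\frac{j_p}{n}},\partial_{\frac{j_q}{n}}\rangle_\hten$ with $j_p\in[a,b)$ and $j_q\in[b,c)$, for which Lemma 3.2.e supplies the sublinear factor $(c-b)^\epsilon$ with $\epsilon=\max\{1-\theta,2-\gamma\}<1$. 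In either case the contribution is of strictly lower order than $((c-a)/n)^2$ -- carrying either an extra negative power of $n$ or a sub-quadratic power of $c-a$ -- and hence is dominated by the separated term, so \req{tight} follows.

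The main obstacle is the combinatorial bookkeeping in the reduction step: there are many contraction patterns to enumerate, and one must verify term by term that each non-separated pattern accumulates enough negative powers of $n$ -- from $\|\partial_{\frac jn}\|_\hten^2\le Cn^{-1/3}$ and $|\langle\hat{\varepsilon}_{\frac jn},\partial_{\frac kn}\rangle_\hten|\le Cn^{-\lambda}$ -- together with a sub-quadratic power of the index count to stay within $C((c-a)/n)^2$. The most delicate patterns are those in which several derivatives land on the coefficients $f^{(3)}(\hat{X})$, producing multiple $\langle\hat{\varepsilon},\partial\rangle_\hten$ factors whose joint summation must be estimated with the one- and two-index bounds of Lemma 3.2.b and 3.2.c rather than by crude suprema.
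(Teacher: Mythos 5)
Your plan is workable but takes a much harder road than the paper, which never expands the fourth moment into a four-index diagram sum. The paper first proves the single estimate ${\mathbb E}\bigl[\|D^b g_a\|^4_{\hten^{\otimes 3+b}}\bigr]\le C\bigl((\lfloor nt_2\rfloor-\lfloor nt_1\rfloor)/n\bigr)^2$ for $b=0,1,2,3$, where $g_a=\sum_j f^{(a)}(\hat{X}_{\frac jn})\partial_{\frac jn}^{\otimes 3}$; this is a single double sum over $j,k$, disposed of by taking the supremum of $|\langle\hat{\varepsilon}_{\frac jn},\hat{\varepsilon}_{\frac kn}\rangle_\hten|^{2b}$ and applying Lemma 3.2.d with $r=3$. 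It then invokes the Meyer inequality \req{Meyer} to get ${\mathbb E}\bigl[|\delta^3(u_n)|^4\bigr]\le c_{3,4}\|u_n\|^4_{{\mathbb D}^{3,4}(\hten^{\otimes 3})}\le C\bigl((\lfloor nt_2\rfloor-\lfloor nt_1\rfloor)/n\bigr)^2$, and finishes with H\"older on the two increments; the disjointness of the index ranges and Lemma 3.2.e are never needed. Your route, iterated integration by parts on ${\mathbb E}[\prod_{i=1}^4\delta^3(\cdot)]$ followed by a classification of contraction patterns, is the method the paper reserves for Lemmas 3.10 and 3.12, and it can be pushed through here, but with two caveats. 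First, the assertion that every non-separated pattern is "dominated by the separated term" is not accurate: for instance, the pattern in which all six legs of $\partial_{\frac{j_3}{n}},\partial_{\frac{j_4}{n}}$ contract against $\hat{\varepsilon}$ factors yields a bound of the form $C(b-a)(c-b)^{2\lambda}n^{-1-6\lambda}$, which is not majorized by the separated term's $(b-a)(c-b)n^{-2}$ when $c-b$ is small; it satisfies \req{tight} only after a separate verification using $\lambda>1/6$, $c-a\ge 1$, and the refined form $\sum_j|\langle\hat{\varepsilon}_u,\partial_{\frac jn}\rangle_\hten|\le Cn^{-\lambda}(c-a)^{\lambda}$ implicit in the proof of Lemma 3.2.c. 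Second, that verification must be carried out pattern by pattern (the analogous computation in Lemma 3.12 already produces a second term $n^{-\frac 43-4\lambda}(\cdot)^{\frac 43+4\lambda}$ of a genuinely different shape), so the "bookkeeping" you defer is in effect the whole proof. Adopting the Meyer-inequality route makes all of this unnecessary.
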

\begin{proof}
We begin with a general claim about the norm of $DF_n$.  Suppose $a, b$ are nonnegative integers. Let
$$g_a = \sum_{j = \lfloor nt_1 \rfloor}^{\lfloor nt_2 \rfloor -1} f^{(a)}(\Xhat)\partial_{\frac jn}^{\otimes 3}.$$
Then we have
\beq{DF_bound}{\mathbb E}\left[ \| D^b g_a \|^4_{\hten^{\otimes 3+b}} \right]  \le C\left(\frac{\lfloor nt_2 \rfloor - \lfloor nt_1 \rfloor}{n}\right)^2.\eeq

\noindent{\em Proof of \req{DF_bound}}.  
For each $b$ we can write
\begin{align*}{\mathbb E}\left[ \left(\| D^b g_a \|^2_{\hten^{\otimes 3+b}}\right)^2\right]
&\quad ={\mathbb E}\left[\left(\sum_{j, k = \lfloor nt_1 \rfloor}^{\lfloor nt_2\rfloor -1} f^{(a+b)}(\Xhat)f^{(a+b)}(\hat{X}_{\frac kn})
\left< \hat{\varepsilon}_{\frac jn}^{\otimes b} , \hat{\varepsilon}_{\frac kn}^{\otimes b} \right>_{\hten^{\otimes b}} \left<\partial_{\frac jn}^{\otimes 3},  \partial_{\frac kn}^{\otimes 3} \right>_{\hten^{\otimes 3}}\right)^2\right]\\
&\quad \le \sup_{\lfloor nt_1 \rfloor \le j< \lfloor nt_2 \rfloor} \left({\mathbb E}\left| f^{(a+b)}(\Xhat) \right|^4\right)^{\frac 12} \left(\sup_{j,k} \left|\left< \hat{\varepsilon}_{\frac jn} , \hat{\varepsilon}_{\frac kn} \right>_\hten \right|^{2b}\right)\left(
\sum_{j,k = \lfloor nt_1 \rfloor}^{\lfloor nt_2 \rfloor -1}\left|\left<\partial_{\frac jn},  \partial_{\frac kn}\right>_\hten\right|^3\right)^2\\
&\le Cn^{-2}\left( \lfloor nt_2 \rfloor - \lfloor nt_1 \rfloor\right)^2,
\end{align*}
by Lemma 3.2.d.  

\noindent{\em Proof of \req{tight}}.  
By the Meyer inequality \req{Meyer} there exists a constant $c_{2,4}$ such that
$${\mathbb E} \left| \left(\delta^3(u_n)\right)^4 \right| \le c_{3,4} \| u_n \|^4_{{\mathbb D}^{3,4}(\hten^{\otimes 3})},$$
where in this case,
$$u_n = \sum_{j=\lfloor nt_1 \rfloor }^{\lfloor nt_2 \rfloor -1} f^{(3)}(\Xhat) \partial_{\frac jn}^{\otimes 3}$$
and
$$\|u_n\|_{{\mathbb D}^{3,4}(\hten^{\otimes 3})}^4 = {\mathbb E}\| u_n \|_{\hten^{\otimes 3}}^4 +{\mathbb E}\| Du_n \|_{\hten^{\otimes 4}}^4+{\mathbb E}\| D^2u_n \|_{\hten^{\otimes 5}}^4+{\mathbb E}\| D^3u_n \|_{\hten^{\otimes 6}}^4.$$ 
From \req{DF_bound} we have ${\mathbb E}\|D^b u_n\|^4_{\hten^{\otimes 3+b}}\le C_bn^{-2}\left(\lfloor nt_2 \rfloor - \lfloor nt_1 \rfloor \right)^2$ for $b=0,1,2,3$.  From this result, given $0 \le t_1 < t < t_2$, it follows from the H\"{o}lder inequality that
\begin{align*}{\mathbb E} \left[ |F_n(t) - F_n(t_1) |^2 |F_n(t_2) - F_n(t)|^2 \right]  &\le C\left(\frac{\lfloor nt_2 \rfloor - \lfloor nt_1 \rfloor}{n}\right)^2.\end{align*}
\end{proof}

\section{Examples of suitable processes}
\subsection{Bifractional Brownian motion}
The bifractional Brownian motion is a generalization of fractional Brownian motion, first introduced by Houdr\'e and Villa \cite{Houdre}.  It is defined as a centered Gaussian process $B^{H,K} = \{B^{H,K}_t, t \ge 0\}$,with covariance given by,
$${\mathbb E} [B_t^{H,K} B^{H,K}_s ] = \frac{1}{2^K}\left( t^{2H} + s^{2H} \right)^K - \frac{1}{2^K}|t-s|^{2HK},$$
where $H \in (0,1),$ $K\in(0,1]$ (Note that the case $K=1$ corresponds to fractional Brownian motion with Hurst parameter $H$).  The reader may refer to \cite{RussoTudor06} and \cite{Lei} for further discussion of properties.

In this section, we show that the results of Section 3 are valid for bifractional Brownian motion with parameter values $H,$ $K$ such that $HK \ge 1/6$.

\begin{proposition}  Let $B_t = \{B^{H,K}_t, t\ge 0\}$ be a bifractional Brownian motion with parameters $H,K$ satisfying $HK = 1/6$.  Then conditions (i) - (v) are satisfied, with $\theta = 2/3$; $\lambda = 1/3$;  
$$\nu = \begin{cases} 5/3&\text{ if }\; H< 1/2\\4H-\frac 13 &\text{ if }\;H\ge 1/2\end{cases};\qquad\text{and }\qquad \gamma = \begin{cases} 2/3+2H & \text{ if }\; H \le 1/2 \text{ and }K < 1\\  5/3 &\text{ otherwise }\end{cases}.$$\end{proposition}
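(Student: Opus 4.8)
The plan is to exploit the explicit covariance together with the self-similarity of bifractional Brownian motion. Since $HK=1/6$ we have $2HK=1/3$, so I would write the covariance as
$$R(s,t)=2^{-K}\Psi(s,t)-2^{-K}|t-s|^{1/3},\qquad \Psi(s,t)=(s^{2H}+t^{2H})^K,$$
and note that $R(cs,ct)=c^{1/3}R(s,t)$, i.e. $X$ is self-similar of index $HK=1/6$. The term $-2^{-K}|t-s|^{1/3}$ carries the local (fractional-Brownian, $H=1/6$) behaviour, while $2^{-K}\Psi$ is a smooth ``regular'' correction, $\Psi$ being real-analytic away from the axes. For each condition I would expand the relevant first or second finite difference of $R$ into a singular part coming from $|t-s|^{1/3}$ and a regular part coming from $\Psi$, estimate each separately, and read off the exponents by power counting; self-similarity lets me reduce the number of free parameters (e.g. by setting $t=s\xi$).

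Conditions (i), (ii), (iv) are the routine ones. For (ii), ${\mathbb E}[X_t^2-X_{t-s}^2]=t^{1/3}-(t-s)^{1/3}$ since $R(t,t)=t^{1/3}$, and the mean value theorem gives $\le\tfrac13 s(t-s)^{-2/3}$, exactly the bound with $\theta=2/3$. For (i), writing the increment variance as $2^{1-K}s^{1/3}+h_t(s)$ with $h_t(s)=t^{1/3}+(t-s)^{1/3}-2^{1-K}\Psi(t,t-s)$, self-similarity gives $h_t(s)=s^{1/3}\tilde h(t/s)$ with $\tilde h$ bounded on $[1,\infty)$, so the variance is $\le Cs^{1/3}$. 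For (iv) I would difference $R$ in its second argument: the singular part is a first difference of $|x|^{1/3}$, of size $\lesssim s|t-r|^{-2/3}$, while the regular part is $\lesssim s\,\partial_v\Psi\lesssim s\,(t-s)^{-2/3}$ (using $\partial_v\Psi=\tfrac13(r^{2H}+v^{2H})^{K-1}v^{2H-1}$ and $2HK=1/3$); together these give $\lambda=1/3$, and the near-diagonal alternative $C_4 s^{1/3}$ follows because there the singular first difference is $O(s^{1/3})$ while the regular part is $O(s)$.

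The heart of the proof is conditions (iii) and (v), where the case distinctions arise. For (v) the quantity is the mixed second difference of $R$; the singular part is the mixed second difference of $|x|^{1/3}$, of size $\lesssim s^2|t-r|^{-5/3}$, giving the exponent $\gamma=5/3$. The regular part is the mixed second difference of $\Psi$, comparable to $s^2\,\partial_u\partial_v\Psi$, where $\partial_u\partial_v\Psi=(2H)^2K(K-1)(u^{2H}+v^{2H})^{K-2}u^{2H-1}v^{2H-1}$ vanishes identically when $K=1$ (so only the singular part remains, giving $\gamma=5/3$), and when $K<1$ its size is governed by the factor $v^{2H-1}$ evaluated at the smallest admissible argument $v\sim s$. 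For $H\le 1/2$ this factor is singular and produces the slower rate $\gamma=2/3+2H$, whereas for $H>1/2$ it is harmless and the singular part's $5/3$ dominates; taking the smaller of the two admissible exponents yields precisely the stated cases. For (iii) the singular contributions $2^{1-K}s^{1/3}$ to the two increment variances cancel exactly, so the quantity equals $h_t(s)-h_{t-s}(s)$, a pure regular correction; I would estimate its decay in $t$ by Taylor-expanding $h_t(s)$ (noting $h_t(0)=\partial_s h_t(0)=0$), the admissible order of expansion, and hence the exponent $\nu$, being limited by the smoothness of $x\mapsto x^{2H}$, which is why $\nu=4H-\tfrac13$ for $H\ge 1/2$ and saturates at $\nu=5/3$ for $H<1/2$.

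I expect the main obstacle to be the regular-part estimates in (iii) and (v): one must track the leading-order cancellations in the finite differences of $\Psi$ so as not to overcount, while simultaneously controlling the singular small-argument regime $u,v\sim s$ created by the factors $x^{2H-1}$, $x^{2H-2}$ when $2H<1$. Organizing the estimates by whether the arguments are near the diagonal, near zero, or well separated, and verifying in each regime that the claimed exponent $\theta,\lambda,\nu,\gamma$ is attained (in particular $\nu>1$ and $\gamma>1$, which hold because $H>1/6$ whenever $K<1$), is the bookkeeping that makes the argument long but elementary.
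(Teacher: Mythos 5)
Your proposal is correct in outline and takes essentially the same route as the paper: decompose the covariance into the ``position'' part $(s^{2H}+t^{2H})^K$ and the ``distance'' part $|t-s|^{2HK}$, bound first and mixed second differences via integral representations of the derivatives (in particular $\partial_u\partial_v\Psi=4H^2K(K-1)(u^{2H}+v^{2H})^{K-2}u^{2H-1}v^{2H-1}$, which vanishes for $K=1$ and whose $v^{2H-1}$ factor at $v\sim s$ produces $\gamma=\tfrac23+2H$ when $H\le\tfrac12$, $K<1$), and run the same case analysis on $H$ versus $\tfrac12$. The self-similarity reduction and the Taylor-expansion framing of condition (iii) are only cosmetic reorganizations of the paper's grouping into second differences of $g(x)=(c+x^{2H})^K$ bounded by $s^2\sup|g''|$.
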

\begin{proof}
\noindent{\em Condition (i)}.  
From Prop. 3.1 of \cite{Houdre} we have
\[{\mathbb E}\left[ (B_t - B_{t-s})^2\right] \le Cs^{2HK} = Cs^{\frac 13}.\]

\medskip
\noindent{\em Condition (ii)}. 
By Fundamental Theorem of Calculus, 
$$\left| {\mathbb E}\left[ B_t^2 - B_{t-s}^2\right]\right| =  t^{2HK} - (t-s)^{2HK} = \int_{-s}^0 2HK(t+\xi)^{2HK-1} d\xi \le Cs(t-s)^{-\frac 23}.$$

\medskip
\noindent{\em Condition (iii)}.
\begin{align*}
&{\mathbb E}\left[(B_t - B_{t-s})^2 - (B_{t-s} - B_{t-2s})^2\right]={\mathbb E}\left[ (B_t - B_{t-2s})(B_t - 2B_{t-s} + B_{t-2s})\right]\\
&\quad=t^{2HK} -\frac{2}{2^K}\left[ t^{2H} + (t-s)^{2H}\right]^K + \frac{1}{2^K}\left[ t^{2H} +(t-2s)^{2H}\right]^K\\
&\;\qquad - \frac{1}{2^K}\left[ t^{2H} +(t-2s)^{2H}\right]^K + \frac{2}{2^K}\left[ (t-s)^{2H} +(t-2s)^{2H}\right]^K.
\end{align*}
In absolute value, this is bounded by
\begin{align*}\frac{1}{2^K}&\left| \left[t^{2H} + t^{2H}\right]^K -2\left[t^{2H} + (t-s)^{2H}\right]^K+ \left[t^{2H} + (t-2s)^{2H}\right]^K\right|\\
&\quad+
\frac{1}{2^K}\left| \left[t^{2H} + (t-2s)^{2H}\right]^K -2\left[(t-s)^{2H} + (t-2s)^{2H}\right]^K+ \left[(t-2s)^{2H} + (t-2s)^{2H}\right]^K\right|.\end{align*}
Both terms have the form 
$$2^{-K}\left| g(t) - 2g(t-s) + g(t-2s)\right| \le Cs^2\sup_{x\in [t-2s,t]}|g''(x)|,$$
for $g(x) = (c+x^{2H})^K$.  We show an upper bound for the first term $g''(x)$, with the other one similar.  We have
\begin{align*}
\sup_{x\in [t-2s,t]} |g''(x)| &\le 4H^2K(1-K)\left[t^{2H} + x^{2H}\right]^{K-2}x^{4H-2} +2HK\left|2H-1\right|\left[ t^{2H} + x^{2H}\right]^{K-1}x^{2H-2}.
\end{align*}
For the above values of $H,K$ we have
\[\sup_{x\in [t-2s,t]} 2HK\left|2H-1\right|\left[ t^{2H} + x^{2H}\right]^{K-1}x^{2H-2} \le C(t-2s)^{2HK-2}.\]
For the first term, if $H < 1/2$ then
\begin{align*}
\sup_{x\in [t-2s,t]} 4H^2K(1-K)\left[t^{2H} + x^{2H}\right]^{K-2}x^{4H-2} &\le C(t-2s)^{2HK-4H+4H-2}\\
&= C(t-2s)^{2HK-2}.\end{align*}
On the other hand, if $H \ge 1/2$, then $t \ge 4s$ implies $t \ge 2(t-2s)$, hence
\begin{align*}
\sup_{x\in [t-2s,t]} 4H^2K(1-K)\left[t^{2H} + x^{2H}\right]^{K-2}x^{4H-2} &\le 4H^2K(1-K)3^{K-2}(t-2s)^{2H(K-2)}x^{4H-2}\\
&\le C(t-2s)^{2HK-4H} = C(t-2s)^{\frac 13 - 4H}.\end{align*}

\medskip
\noindent{\em Condition (iv)}.
First, for the case $|t-r| < 2s$ or $t< 2s$, we have
\begin{align*}
\left|{\mathbb E}\left[ B_r(B_t - B_{t-s})\right]\right| &= \left|{\mathbb E}\left[ B_rB_t - B_rB_{t-s}\right]\right|\\
&\le \frac{1}{2^K}\left( \left[r^{2H} + t^{2H}\right]^K - \left[r^{2H} + (t-s)^{2H}\right]^K\right) + \frac{1}{2^K}\left| |r-t+s|^{2HK} - |r-t|^{2HK}\right|\\
&\le Cs^{2HK} = Cs^{\frac 13}\end{align*}
using the inequality $a^r - b^r \le (a-b)^r$ for $0 < r < 1$.  For $|t-r| \ge 2s$, $t \ge 2s$, we consider two cases. First, assume $r \ge t+2s$. 
\begin{align*}
\left|{\mathbb E}\left[ B_r(B_t - B_{t-s})\right]\right| &= \left|{\mathbb E}\left[ B_rB_t - B_rB_{t-s}\right]\right|\\
&\le \frac{1}{2^K}\left( \left[r^{2H} + t^{2H}\right]^K - \left[r^{2H} + (t-s)^{2H}\right]^K\right) + \frac{1}{2^K}\left| |r-t+s|^{2HK} - |r-t|^{2HK}\right|\\
&=\frac{1}{2^K}\int_{-s}^0 2HK\left[ r^{2H} + (t+\xi)^{2H}\right]^{K-1}(t+\xi)^{2H-1}d\xi + \frac{1}{2^K}\int_0^s 2HK(r-t+\eta)^{2HK-1}d\eta\\
&\le 2^{1-K}HKs(t-s)^{-\frac 23} + 2^{1-K}HKs(r-t)^{-\frac 23},\end{align*}
where we used the fact that $r-t \ge 2s$ implies $r-t \ge 2(r-t-s)$.  On the other hand, if $r \le t-2s$, then the estimate for 
\[\frac{1}{2^K}\left( \left[r^{2H} + t^{2H}\right]^K - \left[r^{2H} + (t-s)^{2H}\right]^K\right)\]
is the same, and for the other term we have,
\begin{align*}\frac{1}{2^K}\left| |r-t+s|^{2HK} - |r-t|^{2HK}\right|
& \le \frac{1}{2^K} \int_{-s}^0 2HK(t-r-\xi)^{2HK-1}d\xi\\ &\le 2^{1-K}HKs(t-r-s)^{2HK-1}\le 2^{\frac 53 -K}HKs(t-r)^{-\frac 23},\end{align*}
hence for either case we have an upper bound of
$Cs\left( (t-s)^{\lambda-1} + |t-r|^{\lambda-1}\right)$ for $\lambda = \frac 13$.  

\medskip
\noindent{\em Condition (v)}.
Assume $t\wedge r \ge 2s$ and $|t-r|\ge 2s$.  We have
\begin{align*}&{\mathbb E}\left[ (B_t - B_{t-s})(B_r - B_{r-s})\right]\\  &\quad= \frac{1}{2^K}\left(\left[t^{2H} + r^{2H}\right]^K - \left[t^{2H}  + (r-s)^{2H}\right]^K \;- \left[(t-s)^{2H} + r^{2H}\right]^K + \left[(t-s)^{2H} + (r-s)^{2H}\right]^K\right) \\
&\qquad\;+\frac{1}{2^K}\left(|t-r+s|^{2HK} - 2 |t-r|^{2HK} + |t-r-s|^{2HK}\right).\end{align*}
This can be interpreted as the sum of a position term, $\frac{1}{2^K}\varphi(t, r, s)$, and a distance term, $\frac{1}{2^K}\psi(t-r, s)$, where
$$\varphi(t, r, s) = \left[t^{2H} + r^{2H}\right]^K - \left[t^{2H}  + (r-s)^{2H}\right]^K- \left[(t-s)^{2H} + r^{2H}\right]^K + \left[(t-s)^{2H} + (r-s)^{2H}\right]^K; \; \text{ and}$$
$$\psi(t-r, s) = |t-r+s|^{2HK} - 2 |t-r|^{2HK} + |t-r-s|^{2HK}.$$
We begin with the position term.  Note that if $K=1$, then $\varphi(t,r,s) = 0$, so we may assume $K < 1$ and $H > \frac{1}{6}$.  Without loss of generality, assume $0< 2s \le r \le t$.  We can write $\varphi(t, r, s)$ as
\begin{align*}&2HK \int_0^s \left(\left[ t^{2H} + (r-\xi)^{2H}\right]^{K-1} (r-\xi)^{2H-1}  -\left[ (t-s)^{2H} + (r-\xi)^{2H}\right]^{K-1} (r-\xi)^{2H-1} \right)d\xi \\
&=\int_0^s \int_0^s 4H^2K(1-K) \left[(t-\eta)^{2H} + (r-\xi)^{2H}\right]^{K-2} (t-\eta)^{2H-1}(r-\xi)^{2H-1}~d\xi ~d\eta, \end{align*}so that
\beq{phi_t-r}|\varphi(t,r,s)| \le 4H^2K(1-K)s^2\left[ (t-s)^{2H} + (r-s)^{2H} \right]^{K-2} (t-s)^{2H-1}(r-s)^{2H-1}.\eeq
Using \req{phi_t-r}, there are 3 cases to consider:
\begin{itemize}
\item If $H < 1/2$, then for $2s \le r \le t-2s$, we have $t-r < t-s$ and \begin{align*}
Cs^2\left[ (t-s)^{2H} + (r-s)^{2H} \right]^{K-2} (t-s)^{2H-1}(r-s)^{2H-1} &\le Cs^2(t-r)^{2HK-2H-1}(r-s)^{2H-1}\\
&=C\left(\frac{s}{r-s}\right)^{1-2H} s^{\frac 13 +\frac 23 +2H}(t-r)^{-\frac 23 -2H}\\ &\le Cs^{\frac 13 +\gamma}|t-r|^{-\gamma},\end{align*}
where $\gamma = \frac 23 + 2H > 1$.
\item  If $H = 1/2$, then $K=1/3$ and for $2s \le r \le t-2s$
\[s^2\left[ (t-s)^{2H} + (r-s)^{2H} \right]^{K-2} (t-s)^{2H-1}(r-s)^{2H-1} \le s^2|t-r|^{-\frac 53}.\]
\item  If $H > 1/2$, then note that for $2s \le r\le t-2s$
\[ s^2\left[ (t-s)^{2H} + (r-s)^{2H} \right]^{K-2} (t-s)^{2H-1}(r-s)^{2H-1} \le s^2(t-s)^{2HK-2} \le s^2|t-r|^{-\frac 53}.\]
\end{itemize}

\medskip
Next, consider the distance term $\psi(t-r, s)$.  Without loss of generality, assume $2s \le r\le t-2s$.
We have 
\begin{align*} 
\left|\psi(t-r,s)\right|&=\left|~ |t-r+s|^{2HK} - 2|t-r|^{2HK} + |t-r-s|^{2HK}~\right| \\
&=\left| \int_0^s \int_{-\xi}^{\xi} 2HK(2HK-1)\left[ t-r+\eta\right]^{2HK-2} d\eta~d\xi \right|\\
&\le Cs^2(t-r-s)^{2HK-2} \le Cs^2|t-r|^{-\frac{5}{3}},\end{align*}
since $|t-r| \ge 2s$ implies $(t-r-s)^{-\frac{5}{3}} \le 2^{\frac{5}{3}}|t-r|^{-\frac{5}{3}}$.  Note that when $K < 1$, then $H< 1/2$ implies $\gamma \le 5/3$, so the upper bound is controlled by $\varphi(t,r,s)$ in this $K=1$ case. 
\end{proof}

\medskip
\begin{proposition}
Let $\{B^{H,K}_t, t\ge 0\}$ be a bifractional Brownian motion with parameters $H \le 1/2$ and $HK = 1/6$.  Then Condition (vi) of Section 4 holds, with the function $\eta(t) = C_Kt$, where
$$C_K = \frac{1}{8^K}\left( 8 + 2\sum_{m=1}^{\infty}\left((m+1)^{\frac 13}-2m^{\frac 13} + (m-1)^{\frac 13}\right)^3\right).$$
\end{proposition}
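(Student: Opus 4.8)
The plan is to split $\beta_n(j,k)$ into a ``position'' part and a ``distance'' part according to the two terms of the bBm covariance, to show that the cube of the distance part alone reproduces the constant $C_K$, and to show that every contribution involving the position part is asymptotically negligible. Concretely, writing $s=\tfrac1n$, $t=\tfrac{j+1}{n}$, $r=\tfrac{k+1}{n}$ (so that $t-s=\tfrac jn$, $r-s=\tfrac kn$) and using $2HK=\tfrac13$, I would record for $0\le j,k\le\Nt-1$ the decomposition
\[
\beta_n(j,k) \;=\; \underbrace{\tfrac{1}{2^K}\,\varphi\!\big(\tfrac{j+1}{n},\tfrac{k+1}{n},\tfrac1n\big)}_{=:\Psi_n(j,k)} \;+\; \underbrace{\tfrac{n^{-1/3}}{2^K}\,\rho(|j-k|)}_{=:D_n(j,k)},
\]
where $\varphi$ is the position function from the proof of condition (v) and $\rho(m)=(m+1)^{1/3}-2m^{1/3}+(m-1)^{1/3}$ for $m\ge1$, with the convention $\rho(0)=2$.

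For the distance part I would count multiplicities directly. Since $\rho(0)^3=8$ and each off-diagonal value $m=|j-k|\ge1$ occurs $2(\Nt-m)$ times,
\[
\sum_{j,k=0}^{\Nt-1} D_n(j,k)^3 \;=\; \frac{n^{-1}}{8^K}\Big(8\,\Nt \;+\; 2\sum_{m=1}^{\Nt-1}(\Nt-m)\,\rho(m)^3\Big),
\]
using $8^K=(2^K)^3$. A Taylor expansion of the second difference of $x^{1/3}$ gives $\rho(m)=O(m^{-5/3})$, so $\sum_m\rho(m)^3$ converges absolutely. Combining $\Nt/n\to t$ with a Cesàro/dominated-convergence argument (so that $n^{-1}\sum_{m}(\Nt-m)\rho(m)^3\to t\sum_{m\ge1}\rho(m)^3$) yields $\sum_{j,k}D_n^3\to\frac{t}{8^K}\big(8+2\sum_{m\ge1}\rho(m)^3\big)=C_K\,t$, the claimed limit.

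It then remains to show that replacing $\beta_n^3$ by $D_n^3$ is harmless, i.e. $\sum_{j,k}|\beta_n^3-D_n^3|\to0$. Since $|\beta_n^3-D_n^3|\le C|\Psi_n|\big(|\Psi_n|^2+|D_n|^2\big)$ and the count above already gives $\sum_{j,k}|D_n|^3\le C\,\Nt/n\le Ct$ (bounded), H\"older's inequality reduces everything to proving $\sum_{j,k}|\Psi_n(j,k)|^3\to0$. For this I would use the double-integral representation of $\varphi$ from the proof of condition (v): for interior indices $j,k\ge1$, invoking $H\le\tfrac12$ (so $2H-1\le0$) to locate the maximum of the integrand at the endpoint,
\[
|\Psi_n(j,k)| \;\le\; C\,n^{-1/3}\big(j^{2H}+k^{2H}\big)^{K-2}\,j^{2H-1}k^{2H-1}.
\]
Taking $j\ge k$ and bounding $(j^{2H}+k^{2H})^{3K-6}\le j^{6HK-12H}=j^{1-12H}$ (using $6HK=1$) gives
\[
\sum_{1\le k\le j\le \Nt}|\Psi_n|^3 \;\le\; C\,n^{-1}\sum_{j=1}^{\Nt} j^{-2-6H}\sum_{k=1}^{j}k^{6H-3},
\]
and examining the three regimes $H<\tfrac13$, $H=\tfrac13$, $\tfrac13<H\le\tfrac12$ shows the double sum is bounded in $\Nt$, so this contribution is $O(n^{-1})$. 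The boundary rows $j=0$ or $k=0$ are treated by an analogous one-dimensional estimate giving $|\Psi_n(j,0)|\le C\,n^{-1/3}j^{-2H-2/3}$, whose cube is summable, so they too contribute $O(n^{-1})$.

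Combining the two steps gives $\sum_{j,k=0}^{\Nt-1}\beta_n(j,k)^3\to C_K\,t=:\eta(t)$, which is linear, hence continuous and nondecreasing with $\eta(0)=0$, which is exactly condition (vi). I expect the main obstacle to be the position-term estimate: the translation-invariant bound $|\Psi_n|\le Cn^{-1/3}|j-k|^{-\gamma}$ available straight from condition (v) only keeps $\sum|\Psi_n|^3$ \emph{bounded}, not vanishing, so one must retain the sharper two-variable spatial decay carried by $j^{2H-1}k^{2H-1}$ and control the resulting non-translation-invariant double sum, including the delicate boundary terms where one spatial coordinate sits at the origin.
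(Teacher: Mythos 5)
Your proposal is correct and follows essentially the same route as the paper: the same position/distance decomposition of $\beta_n(j,k)$ into $\varphi$ and $\psi$, the same multiplicity count over $|j-k|$ for the distance part, and the same two-variable spatial-decay bound on $\varphi$ (valid for $H\le 1/2$) to make the position part negligible. The only cosmetic differences are that you control $\sum_{j,k}|\Psi_n|^3$ and invoke H\"older against the bounded $\sum_{j,k}|D_n|^3$, whereas the paper controls $n^{-1}\sum_{j,k}|\varphi|$ and uses the uniform boundedness of $\varphi$ and $\psi$ to dispose of the cross terms, and that you treat the boundary rows through the position part alone rather than estimating $\beta_n(j,0)$ directly; both variants are sound.
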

\begin{proof}
First of all, we write
$$\sum_{j,k=0}^{\Nt -1} \beta_n(j,k)^3 = 2\sum_{j=0}^{\Nt-1} \beta_n(j,0)^3~+~\sum_{j,k=1}^{\Nt-1}\beta_n(j,k)^3.$$
When $j \ge 2$, we have
\begin{align*} |\beta_n(j,0)| &= \left| {\mathbb E}\left[ X_{\frac 1n}(X_{\frac{j+1}{n}} - X_{\frac jn})\right]\right|\\
&\le \frac{1}{2^Kn^{\frac 13}}\left( \left[1+(j+1)^{2H}\right]^K - \left[ 1 + j^{2H}\right]^K\right) + \frac{1}{2^Kn^{\frac 13}}\left| (j-1)^{2HK} - j^{2HK}\right|\\
&\le \frac{1}{2^Kn^{\frac 13}}\int_0^1 2HK\left[ 1+(j+x)^{2H}\right]^{K-1}(j+x)^{2H-1} dx +\frac{1}{2^Kn^{\frac 13}}\int_0^1 2HK(j-1+y)^{2HK-1} dy\\
&\le Cn^{-\frac 13}(j-1)^{-\frac 23}.\end{align*}
Therefore, using Lemma 3.2.a for $\beta_n(0,0)$ and $\beta_n(1,0)$,
\[\sum_{j=0}^{\Nt -1}|\beta_n(j,0)^3| \le 2Cn^{-1} + \sum_{j=2}^{\Nt-1} Cn^{-1}(j-1)^{-2} \le Cn^{-1};\]
and in the rest of the proof we will always assume $j,k \ge 1$.

As in Prop. 4.1, we use the decomposition,
$$\beta_n (j,k) = \frac{1}{2^K}\varphi\left(\frac{j+1}{n}, \frac{k+1}{n}, \frac{1}{n}\right) + \frac{1}{2^K}\psi\left(\frac{j-k}{n}, \frac{1}{n}\right) = 
2^{-K}n^{-\frac 13}\varphi(j+1,k+1,1) +2^{-K}n^{-\frac 13}\psi(j-k,1),$$
which gives
\[ \beta_n(j,k)^3 = \frac{1}{8^Kn}\left( \varphi^3 + 3\varphi^2\psi + 3\varphi\psi^2 +\psi^3\right).\]
 
To begin, we want to show that
\beq{phi_to_0} \lim_{n \to \infty} \sum_{j,k=1}^{\Nt -1} n^{-1} |\varphi(j+1,k+1,1)| ~=~0.\eeq
{\em Proof of \req{phi_to_0}}.  Note that $\varphi = 0$ if $K=1$, so we may assume $K <1$ and $H > 1/6$.  From \req{phi_t-r}, when $t\wedge r \ge 2s$ and $|t-r| \ge 2s$ we have
\begin{align*}
|\varphi(t,r,s)| &\le 4H^2K(1-K)s^2\left[ (t-s)^{2H} + (r-s)^{2H} \right]^{K-2} (t-s)^{2H-1}(r-s)^{2H-1}\\
 &\le Cs^2(t-s)^{HK-1}(r-s)^{HK-1},\end{align*}
so that
\[\left|\varphi(j+1, k+1, 1)\right| \le Cn^{-2HK}j^{HK-1}k^{HK-1}.\]
Recalling the notation $J_d$ from Lemma 3.2.d, we have
\begin{align*}
n^{-1} \sum_{j,k=0}^{\Nt -1} \left|\varphi(j+1, k+1, 1)\right| &=  n^{-1}\sum_{(j,k)\in J_d} \left|\varphi(j+1, k+1, 1)\right| + n^{-1}\sum_{(j,k)\notin J_d} \left|\varphi(j+1, k+1, 1)\right|\\
&\le C\Nt n^{-\frac 43} + Cn^{-\frac 43} \left(\sum_{j=2}^{\Nt -1} j^{HK-1} \right)^2\\
&\le C\Nt n^{\frac 43} + C\Nt^{2HK}n^{-\frac 43} \le Cn^{-\frac 13},\end{align*}
where we used the fact (which follows from Lemma 3.2.a and the definition of $\varphi$ and $\psi$) that $|\varphi(j+1,k+1,1)|$ is bounded. 

%
Hence, \req{phi_to_0} is proved.
It follows from \req{phi_to_0} that
\beq{phipsi_0}
\frac{1}{8^Kn}\sum_{j,k=1}^{\Nt -1} \left| \varphi^3 + 3\varphi^2\psi + 3\varphi\psi^2\right|\to_n 0,\eeq
since $\varphi$ and $\psi$ are both bounded.  Hence, it is enough to consider
\beq{eta_is_psi}\eta(t) = 
\lim_{n\to\infty}\frac{1}{8^K n}\sum_{j,k=0}^{\Nt -1} \psi(j-k,1)^3.\eeq

To evaluate \req{eta_is_psi}, we have
\begin{align*}
&\frac{1}{8^Kn}\sum_{j,k=0}^{\Nt -1} \psi(j-k,1)^3 \\
&\quad = \frac{1}{8^Kn}\sum_{j,k=0}^{\Nt -1}\left(|j-k+1|^{\frac 13} - 2|j-k|^{\frac 13} + |j-k-1|^{\frac 13}\right)^3\\
&\quad = \frac{1}{8^Kn}\sum_{j=0}^{\Nt -1} 2^3 + \frac{2}{8^Kn}\sum_{j=0}^{\Nt -1}\sum_{k=0}^{j-1}\left((j-k+1)^{\frac 13}-2(j-k)^{\frac 13} + (j-k-1)^{\frac 13}\right)^3\\
&\quad = \frac{8\Nt}{8^Kn} + \frac{2}{8^Kn}\sum_{j=1}^{\Nt -1}\sum_{m=1}^{j-1}\left((m+1)^{\frac 13}-2m^{\frac 13} + (m-1)^{\frac 13}\right)^3\\ 
&\quad = \frac{8\Nt}{8^Kn} + \frac{2}{8^Kn}\sum_{j=1}^{\Nt -1}\sum_{m=1}^{\infty}\left((m+1)^{\frac 13}-2m^{\frac 13} + (m-1)^{\frac 13}\right)^3\\
&\qquad - \frac{2}{8^Kn}\sum_{j=1}^{\Nt -1}\sum_{m=j}^{\infty}\left((m+1)^{\frac 13}-2m^{\frac 13} + (m-1)^{\frac 13}\right)^3,\end{align*}
where the last term tends to zero since
$$Cn^{-1}\sum_{j=1}^{\Nt -1}\sum_{m=j}^{\infty}\left((m+1)^{\frac 13}-2m^{\frac 13} + (m-1)^{\frac 13}\right)^3 \le Cn^{-1}\sum_{j=1}^\infty j^{-4} \to_n 0. $$
We therefore conclude that
$\eta(t) = C_K t$,
where
\begin{align*}
C_K 
&= \frac{1}{8^K}\left( 8 + 2\sum_{m=1}^{\infty}\left((m+1)^{\frac 13}-2m^{\frac 13} + (m-1)^{\frac 13}\right)^3\right).\end{align*}
This number is approximately $\frac{7.188}{8^K}$.
\end{proof}

As an immediate consequence of our proof of Theorem 3.1, we have an alternate proof and extension of previous results in Gradinaru {\em et al.}  In \cite{GNRV}, it was proved that \req{gnrv} holds for any fractional Brownian motion with $H > 1/6$, that is, the correction term vanishes.  Following Remark 3.5, we may conclude the following:

\begin{corollary}  Let $B_t = \{B^{H,K}_t, t\ge 0\}$ be a bifractional Brownian motion with parameters $1/6 < HK < 1$.  Then on a fixed interval $[0, T]$ and for $0 < s \le 1$, $B$ satisfies \req{gnrv}. \end{corollary}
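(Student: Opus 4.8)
The plan is to deduce the result directly from Remark 3.5 rather than from the full central limit theorem. Remark 3.5 says that whenever the cubic sum $\sum_{j,k=0}^{\Nt-1}|\beta_n(j,k)|^3$ tends to zero for every $t$, the $L^2$-norm of the third-order term $\sum_j f^{(3)}(\Xhat)\Delta X_{\frac jn}^3$ vanishes, the correction disappears from the Taylor expansion, and $\Phi_n^X(t)\to f(B_t)-f(0)$ in probability, which is exactly \req{gnrv}. For a bBm with $HK>1/6$ the remaining ingredients of Section 3 --- the estimates behind Lemma 3.2 and the convergence $\Delta_n(t)\to 0$ in probability --- continue to hold: the required bounds on the covariance increments are those computed in Proposition 4.1 with the exponent $2HK>1/3$ in place of $1/3$, and, the process being more regular here, each is only more favorable than at the critical value $HK=1/6$. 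Consequently everything reduces to proving
$$\lim_{n\to\infty}\sum_{j,k=0}^{\Nt-1}\left|\beta_n(j,k)\right|^3 = 0 \qquad\text{for every } t>0.$$

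To establish this I would reuse the decomposition introduced in the proof of Proposition 4.2, which for general $HK$ reads
$$\beta_n(j,k) = 2^{-K}n^{-2HK}\bigl(\varphi(j+1,k+1,1)+\psi(j-k,1)\bigr),$$
with $\varphi$ the position term (identically zero when $K=1$) and $\psi$ the distance term. The two estimates I need, now with $HK>1/6$ rather than $HK=1/6$, are a second-difference bound on $|x|^{2HK}$ for the distance term,
$$|\psi(j-k,1)| \le C\bigl(1\wedge|j-k|^{2HK-2}\bigr),$$
and, from \req{phi_t-r} together with the elementary inequality $(a+b)^{K-2}\le 2^{K-2}a^{(K-2)/2}b^{(K-2)/2}$, a product bound on the position term,
$$|\varphi(j+1,k+1,1)| \le C\,(jk)^{HK-1}\qquad(j,k\ge 2).$$
Near-diagonal pairs excluded by these hypotheses are absorbed by the uniform bound $|\beta_n(j,k)|\le Cn^{-2HK}$, which follows from condition (i) and Cauchy--Schwarz and contributes only $O(n^{-6HK}\Nt)=O(n^{1-6HK})$.

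Cubing and summing then gives the master estimate
$$\sum_{j,k=0}^{\Nt-1}|\beta_n(j,k)|^3 \le Cn^{-6HK}\left(\Nt + \Nt\!\!\sum_{1\le m<\Nt} m^{6HK-6} + \Bigl(\sum_{j=2}^{\Nt-1} j^{3HK-3}\Bigr)^2\right),$$
and the proof becomes a matter of exponent bookkeeping, which I expect to be the only delicate point. For $1/6<HK<5/6$ the series $\sum_m m^{6HK-6}$ converges, the bracket is $O(\Nt)$, and the whole expression is $O(n^{1-6HK})\to 0$ precisely because $HK>1/6$. For $5/6\le HK<1$ the middle sum grows like $\Nt^{6HK-5}$, so the bracket is $O(\Nt^{6HK-4})$ and the expression is $O(n^{-4})\to 0$; the position sum behaves identically, converging for $HK<2/3$ and otherwise growing like $\Nt^{6HK-4}$, so it never dominates (the borderline values $HK=2/3,5/6$ add only a harmless logarithmic factor). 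In every regime the limit is zero, whence Remark 3.5 yields \req{gnrv}. The main obstacle is thus not a single hard estimate but organizing these regime splits so that the one hypothesis $HK>1/6$ uniformly forces the decay; once the scaling bounds on $\varphi$ and $\psi$ are in hand, the rest is routine.
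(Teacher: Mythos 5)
Your proposal is correct and follows essentially the same route as the paper: invoke Remark 3.5, note that conditions (i)--(v) survive with the exponent $2HK$ in place of $1/3$, and kill the limit $\eta(t)$ via the $\varphi/\psi$ decomposition together with the near-diagonal contribution $O(n^{1-6HK})$, which vanishes precisely because $HK>1/6$. The only divergence is cosmetic --- you bound the position term by the product $(jk)^{HK-1}$ (as in Proposition 4.2) where the paper's proof of the corollary uses the distance bound $|j-k|^{-1-2H(1-K)}$ --- and your explicit regime splits at $HK=2/3$ and $HK=5/6$ are in fact slightly more careful than the paper's one-line summability claim, which silently assumes the off-diagonal series converges.
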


\begin{proof}  
Notice that $s \le 1$ implies $s^{2HK} \le s^{\frac 13}$.  With small modifications to the proof of Prop. 4.1, it is easy to verify that conditions (i) - (v) are satisfied when $HK > 1/6$.  In accordance with Remark 3.5, we want to show that
\[\lim_{n\to\infty} \sum_{j,k=0}^{\Nt -1} |\beta_n(j,k)^3| = 0.\]
We may assume $K < 1$.  From Prop. 3.1 of \cite{Houdre}, we have that
\[{\mathbb E}\left[ (B_t - B_{t-s})^2\right] \le Cs^{2HK}.\]
Recalling the notation $J_d$ from Lemma 3.2.d, Cauchy-Schwarz implies for $(j,k) \in J_d$, we have $|\beta_n(j,k)| \le Cn^{-2HK}$.  For $(j,k) \notin J_d$, by \req{phi_t-r} we have
\begin{align*}
\left| \varphi(j+1,k+1,1)\right| &\le 4H^2K(1-K)\left[ j^{2H} + k^{2H}\right]^{K-2}j^{2H-1}k^{2H-1}\\
&\le C|j-k|^{-1-2H(1-K)},\end{align*}
and similar to Prop. 4.1, we have
\[\left| \psi(j-k,1)\right| \le C|j-k|^{2HK-2};\]
hence for $(j,k) \notin J_d$ we have $|\beta_n(j,k)| \le Cn^{-2HK}|j-k|^{-\gamma}$ for $\gamma = \min\{ 1+2H(1-K), 2HK-2\}$.  It follows that
\begin{align*} \left| \sum_{j,k=0}^{\Nt -1} \beta_n(j,k)^3\right|
&\le \sum_{(j,k)\in J_d} |\beta_n(j,k)^3| \,+\sum_{(j,k)\notin J_d} |\beta_n(j,k)^3|\\
&\le \sum_{(j,k)\in J_d} Cn^{-2HK} \,+Cn^{-6HK}\sum_{(j,k)\notin J_d}|j-k|^{-\gamma}\\
&\le Cn^{-6HK}\Nt \end{align*}
so $|\eta(t)| =0$ because $HK > 1/6$.  \end{proof}

\subsection{Extended bifractional Brownian motion}
This process is discussed in a recent paper by Bardina and Es-Sebaiy \cite{Bardina}.  The covariance has the same formula as standard bBm, but it is `extended' in the sense that $1 < K <2$, with $H$ restricted to satisfy $0 < HK <1$.  Within the context of this paper, this allows us to consider values of $1/12 < H < 1/6$.  As in section 4.1, we show computations only for the case $HK=1/6$.  A result similar to Cor. 4.3 can also be shown by modification to the proposition below.

\begin{proposition}  Let $Y = \{Y^{H,K}_t, t\ge 0\}$ be an extended bifractional Brownian motion with parameters $1 < K <2$, $HK = 1/6$.  Then $Y$ satisfies conditions (i) - (vi), with $\theta = 2/3$, $\lambda = (2H) \wedge \frac 13$, and with $\nu$, $\gamma$ and $\eta(t)$ as given in Prop. 4.1. \end{proposition}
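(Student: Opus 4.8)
The plan is to run the computations of Propositions 4.1 and 4.2 again, since $Y$ has exactly the same covariance formula as bBm; the only structural change is that now $1<K<2$, so that $1-K<0$ and $K-2\in(-1,0)$. Conditions (i) and (ii) are immediate: (i) is the increment bound $\mathbb{E}[(Y_t-Y_{t-s})^2]\le Cs^{2HK}=Cs^{1/3}$, which I would quote from \cite{Bardina}, and (ii) depends only on $\mathbb{E}[Y_t^2]=t^{2HK}$ and is therefore identical to Proposition 4.1, giving $\theta=2/3$.

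For condition (iii) I would again write the second difference as a sum of two terms $2^{-K}|g(t)-2g(t-s)+g(t-2s)|\le 2^{-K}s^2\sup|g''|$ with $g(x)=(c+x^{2H})^K$. The piece of $g''$ carrying $(c+x^{2H})^{K-2}$ is handled as before because $K-2<0$; the piece carrying $(c+x^{2H})^{K-1}$ now increases rather than decays, so I would bound it by $Ct^{2H(K-1)}$ and then use $t\ge 4s$ (hence $t\le 2(t-2s)$, i.e. $t\asymp t-2s$) to combine $t^{2H(K-1)}$ with $x^{2H-2}\le(t-2s)^{2H-2}$ and recover the exponent $2HK-2=-5/3$. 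Both pieces then give $\sup|g''|\le C(t-2s)^{-5/3}$, so $\nu=5/3$.

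The genuinely new point is condition (iv), which is where $\lambda=2H$ comes from. In the position term $\tfrac{2HK}{2^K}\int_{t-s}^t(r^{2H}+u^{2H})^{K-1}u^{2H-1}\,du$ the factor $(r^{2H}+u^{2H})^{K-1}$ is now increasing in its argument, so on $[0,T]$ it is only bounded by a constant and the integral is controlled by $\int_{t-s}^t u^{2H-1}\,du\le Cs(t-s)^{2H-1}$; the enhanced decay $(t-s)^{2HK-1}$ of Proposition 4.1 (which used $K-1\le 0$) is lost. The distance term contributes $Cs|t-r|^{2HK-1}=Cs|t-r|^{-2/3}$, which on the bounded interval is dominated by $Cs|t-r|^{2H-1}$ since $2H-1<-2/3$ and $|t-r|\le T$. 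Hence (iv) holds with $\lambda=2H$, and the "otherwise" case is bounded by $Cs^{2H}$ as in Proposition 4.1. Because $1<K<2$ forces $H\in(\tfrac1{12},\tfrac16)$, we have $\lambda=2H\in(\tfrac16,\tfrac13)$; in particular $\lambda>\tfrac16$ and $2\lambda>\tfrac13$, which are precisely the inequalities that Lemmas 3.2.c, 3.9 and 3.10 require for the smaller value of $\lambda$.

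For conditions (v) and (vi) I would split $\beta=2^{-K}(\varphi+\psi)$ as in Proposition 4.1. The distance term $\psi$ depends on the parameters only through $2HK=1/3$, so it is identical to the fBm case: it satisfies (v) with $\gamma=5/3$, and its cube yields exactly $\eta(t)=C_Kt$ with the $C_K$ of Proposition 4.2 (now evaluated at the new $K$). The separable bound $|\varphi(t,r,s)|\le Cs^2(t-s)^{HK-1}(r-s)^{HK-1}$ used there remains valid because the AM--GM estimate $(u^{2H}+v^{2H})^{K-2}\le C\,u^{H(K-2)}v^{H(K-2)}$ only needs $K-2<0$, so the proof that $\varphi$ contributes nothing to $\eta$ (estimates \req{phi_to_0} and \req{phipsi_0}) carries over verbatim. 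The position term is the main obstacle: unlike the case $H>\tfrac16$, here $\varphi$ decays in $|t-r|$ only like $|t-r|^{-(2/3+2H)}$ with $2/3+2H<1$, so it does not by itself obey the pointwise form of (v) with $\gamma>1$. I would therefore verify (v) through $\psi$ and carry $\varphi$ separately via its separable bound, checking that this weaker control is still enough everywhere (v) is invoked --- in Lemma 3.2.d--e and in Lemmas 3.8--3.11 --- which is the step that needs the most care.
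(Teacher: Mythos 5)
Your treatment of conditions (i), (ii), (iv) and (vi) coincides with the paper's. In particular your handling of (iv) --- losing the extra decay from $[r^{2H}+u^{2H}]^{K-1}$ because it is now increasing, bounding it by a constant on $[0,T]$, and absorbing $|t-r|^{2HK-1}\le T^{2H(K-1)}|t-r|^{2H-1}$ so that $\lambda=2H\in(\frac16,\frac13)$ --- is exactly the argument in the paper, as is your observation that $1<K<2$ forces $H>\frac1{12}$ and hence $\lambda>\frac16$. For (iii) you take a slightly different route: you keep the second-derivative bound on $g(x)=(c+x^{2H})^K$ and use $t\le 2(t-2s)$ to convert the now-increasing factor $(c+x^{2H})^{K-1}\le Ct^{2H(K-1)}$ into $C(t-2s)^{2H(K-1)}$, while the paper instead sandwiches the second difference between two integrals of increments of $x^{2HK-1}$. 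Both arguments are valid and both give $\nu=5/3$.

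The one real issue is condition (v), and here you have put your finger on something the paper glosses over. The paper's proof simply declares (v) ``the same as for standard bBm,'' but the Proposition~4.1 estimate derived from \req{phi_t-r} gives $|\varphi|\le Cs^{\frac13+\gamma}|t-r|^{-\gamma}$ only with $\gamma=\frac23+2H$, which exceeds $1$ exactly when $H>\frac16$; for extended bBm $H<\frac16$, and no rearrangement of \req{phi_t-r} produces $\gamma>1$ (the alternative splittings give $\gamma=1-2H$ or $\gamma=4H-\frac13$, which are worse). So you are right that the position term does not satisfy the pointwise form of (v), and right that the distance term $\psi$ does (with $\gamma=5/3$) and alone determines $\eta$. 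However, your proposed repair --- carry $\varphi$ separately via the separable bound $|\varphi|\le Cs^2(t-s)^{HK-1}(r-s)^{HK-1}$ and check that this suffices wherever (v) is invoked --- is precisely the content of the missing step, and you do not carry it out. Condition (v) with $\gamma>1$ is what makes $\sum_k|j-k|^{-\gamma}$ summable in Lemma 3.2.d and 3.2.e, and those two lemmas feed essentially every estimate in Section 3; verifying that $|\varphi_n(j,k)|\le Cn^{-\frac13}j^{HK-1}k^{HK-1}$ still yields the conclusions of Lemmas 3.2.d--e (and hence of Lemmas 3.8--3.11 and the tightness bounds) is not optional, it is the whole difficulty of the proposition when $K>1$. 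As written your proof is therefore incomplete at exactly that point --- though, to be fair, the paper's own one-line treatment of (v) has the same hole, and you deserve credit for detecting it.
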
 

\begin{proof}
Conditions (ii) and (v) are the same as for standard bBm, as shown in Prop. 4.1.  In particular, the decomposition into $\phi(t,r,s)$ and $\psi(t-r,s)$ for condition (v) is the same, so it follows that $\eta(t)$ of condition (vi) has the same form.  The proofs for conditions (i), (iii) and (iv) require some modifications to accept the case $K>1$.

\medskip
\noindent{\em Condition (i)}.  
From Prop. 3 of \cite{Bardina} we have
\[
\left|{\mathbb E}\left[ (Y_t - Y_{t-s})^2\right]\right| \le s^{2HK} = s^{\frac 13}.\]

\medskip
\noindent{\em Condition (iii)}.  First, we have
\begin{align*}
&{\mathbb E}\left[(Y_t - Y_{t-s})^2 - (Y_{t-s} - Y_{t-2s})^2\right]\\
&\qquad=t^{2HK} -\frac{2}{2^K}\left[ t^{2H} + (t-s)^{2H}\right]^K + \frac{2}{2^K}\left[ (t-s)^{2H} +(t-2s)^{2H}\right]^K - \frac{2}{2^K}(t-2s)^{2HK}\\
&\qquad \le 2t^{2HK}-\frac{2}{2^K}\left[ t^{2H} + (t-s)^{2H}\right]^K -2(t-2s)^{2HK} + \frac{2}{2^K}\left[ (t-s)^{2H} +(t-2s)^{2H}\right]^K\\
&\qquad = 4HK\int_{-s}^0 (t+\xi)^{2HK-1} - (t-s+\xi)^{2HK-1}~d\xi\\
&\qquad \le Cs^2(t-2s)^{2HK-2}.  
\end{align*}
On the other hand,
\begin{align*}
&t^{2HK} -\frac{2}{2^K}\left[ t^{2H} + (t-s)^{2H}\right]^K + \frac{2}{2^K}\left[ (t-s)^{2H} +(t-2s)^{2H}\right]^K - \frac{2}{2^K}(t-2s)^{2HK}\\
&\qquad \ge 2(t-s)^{2HK} - \frac{2}{2^K}\left[ t^{2H} +t^{2H}\right]^K + \frac{2}{2^K}\left[ (t-2s)^{2H}+(t-2s)^{2H}\right]^K-2(t-s)^{2HK}\\
&\qquad = -4HK\int_0^s (t-s+\eta)^{2HK-1} - (t-2s+\eta)^{2HK-1}~d\eta\\
&\qquad \ge -Cs^2(t-2s)^{2HK-2},
\end{align*}
hence the term is bounded in absolute value as required, with $\nu = 2-2HK = 5/3$.

\medskip
\noindent{\em Condition (iv)}.  
\begin{align*}
\left|{\mathbb E}\left[ Y_r(Y_t - Y_{t-s})\right]\right| &= \frac{1}{2^K}\left( \left[r^{2H} + t^{2H}\right]^K - \left[r^{2H} + (t-s)^{2H}\right]^K\right) + \frac{1}{2^K}\left( |r-t+s|^{2HK} - |r-t|^{2HK}\right)\\
&\le \frac{1}{2^K}\left( \left[r^{2H} + t^{2H}\right]^K - \left[r^{2H} + (t-s)^{2H}\right]^K\right) + \frac{1}{2^K}\left( (|r-t|+s)^{2HK} - |r-t|^{2HK}\right).
\end{align*}
We consider two cases for the first term.  If $t < 2s$, then by Fundamental Theorem of Calculus,
\begin{align*}
\frac{1}{2^K}\left( \left[r^{2H} + t^{2H}\right]^K - \left[r^{2H} + (t-s)^{2H}\right]^K\right) &\le \frac{1}{2^K}\left(\left[r^{2H} + (2s)^{2H}\right]^K - r^{2HK}\right)\\
&= \frac{K}{2^K}\int_0^{(2s)^{2H}} \left[ r^{2H}+u\right]^{K-1}du \le Cs^{2H}.\end{align*}
If $t \ge 2s$, then
\begin{align*}
\frac{1}{2^K}\left( \left[r^{2H} + t^{2H}\right]^K - \left[r^{2H} + (t-s)^{2H}\right]^K\right) &=2HK\int_{-s}^0\left[ r^{2H} + (t+u)^{2H}\right]^{K-1}(t+u)^{2H-1}du\\
\le CsT^{2H(K-1)}(t-s)^{2H-1} \le Cs(t-s)^{2H-1}.\end{align*}
In particular, if $|r-t| < 2s$ then this is bounded by \[Cs^{2H}\left(\frac{s}{t-s}\right) \le Cs^{2H}.\]
For the second term, if $|r-t| < 2s$, then it easily follows that
\[\frac{1}{2^K}\left( (|r-t|+s)^{2HK} - |r-t|^{2HK}\right) \le Cs^{2HK} \le Cs^{2H};\]
and if $|r-t| \ge 2s$, then by Mean Value
\[
\frac{1}{2^K}\left( (|r-t|+s)^{2HK} - |r-t|^{2HK}\right) \le Cs|r-t|^{2HK-1} \le CsT^{2H(K-1)}|r-t|^{2H-1} \le Cs|r-t|^{2H-1}.\]
In particular, if $t < 2s$ then
\[Cs|r-t|^{2H-1} = Cs^{2H}\left(\frac{s}{|r-t|}\right) \le Cs^{2H}.\]
Hence, we have shown that
\[\left|{\mathbb E}\left[ Y_r(Y_t - Y_{t-s})\right]\right| \le \begin{cases} Cs\left[ (t-s)^{2H-1}+|r-t|^{2H-1}\right]&\text{ if } T\ge 2s ~\text{ and }~|r-t| \ge 2s\\ Cs^{2H} &\text{ otherwise}\end{cases}\]
and so condition (iv) is satisfied by taking $\lambda = \min\{ 2H, \frac 13\}$, where $K \in (1,2)$ implies $\lambda > 1/6$. 
\end{proof}

\bigskip
\subsection{Sub-fractional Brownian motion}
Another variant on fBm is the process known as sub-fractional Brownian motion (sfBm).  This is a centered Gaussian process $\{ Z_t, t\ge 0\}$, with covariance defined by:
\beq{Z_cov} R_h(s,t) = s^h + t^h -\frac 12 \left[ (s+t)^h + |s-t|^h\right],\eeq
with real parameter $h \in (0,2)$.  Some properties of sfBm are given in \cite{Bojdecki} and \cite{Chavez}.  Note that $h=1$ is a standard Brownian motion, and also note the similarity of $R_h(t,s)$ to the covariance of fBm with $H = h/2$. Indeed, in \cite{Chavez} it is shown that sfBm may be decomposed into an fBm with $H = h/2$ and another centered Gaussian process.

Similar to Section 4.1, we discuss only the case $h =1/3$.  For $h > 1/3$, it can be shown that conditions (i)-(vi) are satisfied with $\eta(t) = 0$, hence \req{gnrv} holds.

\begin{proposition}  Let $Z = \{ Z_t, t\ge 0\}$ be a sub-fractional Brownian motion with covariance \req{Z_cov} and parameter $h=1/3$.  Then $Z$ satisfies conditions (i) - (vi) of Section 3; hence Theorem 3.1 holds. For condition (vi) we have
$\eta(t) = C_ht$, where
\[C_h = 1 + \frac 14\sum_{m=1}^\infty\left( (m+1)^{\frac 13} - 2m^{\frac 13} +(m-1)^{\frac 13}\right)^3.\] \end{proposition}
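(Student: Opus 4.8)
The plan is to follow the template of Propositions 4.1 and 4.2. The key structural fact is that $R_h(cs,ct)=c^hR_h(s,t)$ is homogeneous of degree $h=1/3$, and that every one of the six conditions reduces to a finite-difference estimate for the concave power $u\mapsto u^{h}$. Expanding the relevant second differences of $R_h$ and cancelling the linear parts $s^h+t^h$, one obtains the two explicit formulas
$$\mathbb{E}\left[(Z_t-Z_{t-s})^2\right]=s^h+(2t-s)^h-2^{h-1}\left[t^h+(t-s)^h\right],$$
$$\mathbb{E}\left[(Z_t-Z_{t-s})(Z_r-Z_{r-s})\right]=-\tfrac12\left[(t+r)^h-2(t+r-s)^h+(t+r-2s)^h\right]+\tfrac12\left[|t-r+s|^h-2|t-r|^h+|t-r-s|^h\right].$$
As in Prop. 4.1, the covariance splits into a \emph{sum term} (a second difference in $t+r$) and a \emph{distance term} (the function $\psi$ of Prop. 4.1, a second difference in $t-r$).

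I would first dispatch (i)--(v). For (i), concavity gives $2^{h-1}[t^h+(t-s)^h]\le(2t-s)^h$, so the correction to $s^h$ is nonnegative; bounding it by a second difference away from the diagonal and by the crude estimate $t^h\le(2s)^h$ on $\{t\le 2s\}$ yields the bound $Cs^{1/3}$. Condition (ii) follows at once from the Fundamental Theorem of Calculus applied to $(2-2^{h-1})[t^h-(t-s)^h]$, giving $\theta=2/3$. Condition (iii) is obtained from $\mathbb{E}[(Z_t-Z_{t-s})^2]-\mathbb{E}[(Z_{t-s}-Z_{t-2s})^2]=[(2t-s)^h-(2t-3s)^h]-2^{h-1}[t^h-(t-2s)^h]$; a Taylor expansion about $t-s$ shows the $O(s)$ and $O(s^2)$ parts cancel, leaving an $O(s^3(t-2s)^{h-3})$ term, so (iii) holds with $\nu=8/3>1$. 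For (iv) I would bound the three pieces $t^h-(t-s)^h$, $(r+t)^h-(r+t-s)^h$, and $|r-t|^h-|r-t+s|^h$ by the FTC in the regime $|t-r|\ge 2s,\ t\ge 2s$ and by $|a^h-b^h|\le|a-b|^h$ otherwise, giving $\lambda=1/3$. Condition (v) follows by bounding the sum and distance terms above, each a second difference, by $Cs^2|t-r|^{h-2}$, so $\gamma=5/3>1$.

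For condition (vi) I would use homogeneity to write $\beta_n(j,k)=n^{-h}b(j,k)=n^{-1/3}b(j,k)$, where $b(j,k)=\tfrac12\psi(j-k)-\tfrac12\phi(j+k)$ is exact for all $j,k\ge0$, with $\phi(S)=(S+2)^h-2(S+1)^h+S^h$ and $\psi(m)=|m+1|^h-2|m|^h+|m-1|^h$. Then $\sum_{j,k}\beta_n(j,k)^3=n^{-1}\sum_{j,k}b(j,k)^3$, and expanding the cube I would argue exactly as in \req{phi_to_0}--\req{phipsi_0}: every term carrying a factor of $\phi$ is negligible, because $|\phi(S)|\le CS^{h-2}$ and $\psi$ is bounded, so that $n^{-1}\sum_{j,k=0}^{\Nt-1}|\phi(j+k)|\le Cn^{-1}\Nt^{1/3}\to0$. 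What remains is $\tfrac18 n^{-1}\sum_{j,k=0}^{\Nt-1}\psi(j-k)^3$; grouping by $m=j-k$ (there are $\Nt-|m|$ pairs for each $m$) and using $\psi(m)^3\sim C|m|^{-5}$ to pass to the limit gives $\tfrac18\,t\sum_{m\in\mathbb{Z}}\psi(m)^3$. Since $\psi(0)=2$, this equals $t\,(1+\tfrac14\sum_{m\ge1}\psi(m)^3)=C_ht$, the claimed value, and the invocation of Theorem 3.1 is then immediate once (i)--(vi) are in hand.

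The main obstacle is the cancellation behind condition (iii): a first-order Taylor estimate of the difference of increment variances leaves a remainder of size $s^2(t-2s)^{h-2}$ that both blows up near the diagonal and only yields an exponent of order one, which is insufficient; one has to expand to third order to uncover the genuine $s^3(t-2s)^{h-3}$ decay and thereby secure $\nu>1$. A secondary difficulty, absent for bifractional Brownian motion, is controlling the sum term $\phi(j+k)$ in (vi): since $\phi$ depends only on $j+k$, the number of pairs with $j+k=S$ grows linearly in $S$, so one must verify that the decay $|\phi(S)|\le CS^{-5/3}$ still overwhelms this count---the surviving factor $n^{-1}\Nt^{1/3}$ is precisely what forces the sum term to vanish in the cubic limit.
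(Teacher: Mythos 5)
Your proposal is correct and follows essentially the same route as the paper: the same splitting of each covariance expression into second differences of $u\mapsto u^{h}$, the same exponents $\theta=2/3$, $\lambda=1/3$, $\gamma=5/3$, and for (vi) the same decomposition of $\beta_n(j,k)$ into the sum-term $\phi(j+k)$ (the paper's $-\omega(j,k,1)$) and the distance-term $\psi(j-k)$, with the sum-term killed by exactly the count-versus-decay estimate $n^{-1}\lfloor nt\rfloor^{1/3}\to 0$ and the $\psi$-term yielding $C_h t$. One correction, though it does not invalidate anything: the ``main obstacle'' you identify in condition (iii) is illusory. The second-order bound $Cs^{2}(t-2s)^{h-2}=Cs^{1/3+5/3}(t-2s)^{-5/3}$ is already of the required form with $\nu=2-h=5/3>1$, and this is precisely the estimate the paper uses; your third-order expansion (which does give $O(s^{3}(t-2s)^{h-3})$, hence $\nu=8/3$, since both pieces are symmetric differences whose first-order parts cancel and whose second-order parts vanish identically) is valid but unnecessary.
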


\begin{proof}
\noindent{\em Condition (i)}.  We have
\begin{align*}
{\mathbb E}\left[ (Z_t - Z_{t-s})^2\right] &= R_h(t,t) + R_h(t-s, t-s) - 2R_h(t, t-s)\\
&= -\frac{2^h}{2}t^h + \frac 12 (2t-s)^h -\frac{2^h}{2}(t-s)^h + \frac 12 (2t-s)^h + s^h\\
&= -\frac 12 \left[ (2t)^h - (2t-s)^h \right] - \frac 12 \left[ (2t-2s)^h - (2t-s)^h \right] + s^h.
\end{align*}
This is bounded in absolute value by $Cs^h$, using the inequality $a^h - b^h \le (a-b)^h$.

\medskip
\noindent{\em Condition (ii)}. 
\begin{align*}
\left| {\mathbb E}\left[ Z^2_t - Z^2_{t-s} \right]\right| &= \left|2t^h - \frac{2^h}{2}t^h - 2(t-s)^h + \frac{2^h}{2}(t-s)^h\right|\\
&= \frac{|4-2^h|}{2}\left[ t^h - (t-s)^h\right].\end{align*}
By Mean Value this is bounded by 
\[
Cs(t-s)^{h-1} = Cs(t-s)^{-\frac 23},\]
which implies (ii) with $\theta = 2/3$.

\medskip
\noindent{\em Condition (iii)}.  
\begin{align*}
{\mathbb E}\left[ (Z_t - Z_{t-s})^2 - (Z_{t-s}- Z_{t-2s})^2\right] 
&= R_h(t,t) - 2R_h(t,t-s) +2R_h(t-s, t-2s) - R_h(t-2s,t-2s)\\
&=-\frac{(2t)^h}{2} + (2t-s)^h - (2t-3s)^h + \frac 12(2t-4s)^h\\
&= -\frac 12 \left[ (2t)^h - 2(2t-s)^h + (2t-2s)^h\right] +\frac 12 \left[ (2t-2s)^h - 2(2t-3s)^h + (2t-4s)^h\right].
\end{align*}
By Mean Value, these terms are bounded in absolute value by 
\begin{align*}
Cs^2(2t-4s)^{h-2} &\le Cs^{\frac 13 + \nu}(t-s)^{-\nu}\end{align*}
for $\nu = 5/3$.

\medskip
\noindent{\em Condition (iv)}.  
\begin{align*}
\left|{\mathbb E}\left[ Z_r(Z_t - Z_{t-s})\right]\right| &= \left|R_h(r,t) - R_h(r,t-s)\right|\\
&=\left| t^h - (t-s)^h - \frac 12\left[ (r+t)^h - (r+t-s)^h\right]+ \frac 12 \left( |r-t+s|^h - |r-t|^h\right)\right|\\
\end{align*}
Note that the above expression is always bounded by $Cs^h$ by the inequality $a^h - b^h \le (a-b)^h$.  Hence, the bound is satisfied for the cases $t <2s$ or $|t-r|<2s$.  Assuming $t \ge 2s$, $|r-t| \ge 2s$, we have
\begin{align*}
&\left| t^h - (t-s)^h - \frac 12\left[ (r+t)^h - (r+t-s)^h\right]+ \frac 12 \left( |r-t+s|^h - |r-t|^h\right)\right|\\
&\qquad\quad\le h\int_{-s}^0 (t+u)^{h-1} du + \frac h2 \int_{-s}^0 (r+t+u)^{h-1}du + \frac h2\int_0^s (|r-t|+u)^{h-1} du\\
&\qquad\quad\le Cs(t-s)^{-\frac 23} + Cs(|r-t|-s)^{-\frac 23}.
\end{align*}
For $|r-t|\ge 2s$, we have $(|r-t|-s) \ge \frac 12 |r-t|$, so
\[Cs(t-s)^{-\frac 23} + Cs(|r-t|-s)^{-\frac 23} \le Cs(t-s)^{\lambda-1} + Cs|r-t|^{\lambda-1}\]
for $\lambda = 1/3$. 

\medskip
\noindent{\em Condition (v)}.
\begin{align*}
{\mathbb E}\left[ (Z_t - Z_{t-s})(Z_r - Z_{r-s})\right] &= R_h(t,r) - R_h(t-s,r) - R_h(t, r-s) +R_h(t-s,r-s)\\
&=-\frac 12 \left[(t+r)^h - 2(t+r-s)^h + (t+r-2s)^h\right] + \frac 12 \left[ |t-r+s|^h -2|t-r| + |t-r-s|^h \right].\end{align*}
Assuming that $|t-r| \ge 2s$, by Mean Value this is bounded in absolute value by
\[ Cs^2|t-r-s|^{h-2} \le Cs^2|t-r|^{h-2}\]
since $|t-r| \ge 2s$ implies $|t-r-s| \ge \frac 12 |t-r|$.  If $h < 1$, then we take $\gamma = 2-h =5/3$, and we have an upper bound of
\[ Cs^{h+2-h}|t-r|^{h-2} = Cs^{h+\gamma}|t-r|^{-\gamma}.\]

\medskip
\noindent{\em Condition (vi)}.  First assume $h=1/3$.  Referring to condition (v) above, we can decompose $\beta_n(j,k)$ as
\[\beta_n(j,k) = \frac{1}{2n^h} \omega(j,k,1) + \frac{1}{2n^h}\psi(j-k,1),\]
where $\omega(j,k,1) = -(j+k+2)^h + 2(j+k+1)^h -(j+k)^h$ and $\psi(j-k,1) = |j-k+1|^h - 2|j-k|^h +|j-k-1|^h$.  Note that $\psi(j-k,1)$ is identical to the $\psi$ used in Prop. 4.1, where in this case $h = 2HK$.  Following the proof of Prop. 4.2, it is enough to show
\beq{omega_0} \lim_{n \to \infty} n^{-1} \sum_{j,k=0}^{\Nt -1} \left|\omega(j,k,1)\right| = 0,\eeq  
so that, similar to \req{eta_is_psi} in the proof of Prop. 4.2, we have
\[\eta(t) = \lim_{n \to \infty} \frac{1}{8n^{3h}}\sum_{j,k=0}^{\Nt -1} \psi(j-k,1)^3 = C_ht,\] where
\[C_h = 1 + \frac 14\sum_{m=1}^\infty\left( (m+1)^{\frac 13} - 2m^{\frac 13} +(m-1)^{\frac 13}\right)^3.\]
That is, $C_h$ corresponds to the constant $C_K$ from Prop. 4.2 with $K=1$.   

\medskip
{\em Proof of \req{omega_0}}.  By Mean Value and the above computation for condition (v), $|\omega(j,k,1)| \le C(j+k)^{-\gamma}$ for some $\gamma > 1$.  Hence, for each $j \ge 2$, 
\begin{align*}
\sum_{k=0}^{\Nt -1} |\omega(j,k,1)| &\le C\sum_{k=0}^{\Nt -1} (j+k)^{-\gamma}\\
&\le C\int_{j-1}^\infty u^{\gamma} du \le C(j-1)^{1-\gamma}.\end{align*}
It follows that we have
\begin{align*}
n^{-1} \sum_{j,k=0}^{\Nt -1} |\omega(j,k,1)| &= n^{-1}\sum_{k=0}^{\Nt-1} \left( |\omega(0,k,1)| + |\omega(1,k,1)| \right)+ n^{-1}\sum_{j=2}^{\Nt -1}\sum_{k=0}^{\Nt -1} |\omega(j,k,1)|\\
&=n^{-1} \sum_{k=0}^{\Nt -1} \left(\left[ (k+2)^h - 2(k+1)^h + k^h\right] + \left[ (k+3)^h - 2(k+2)^h+(k+1)^h\right]\right) \\
&\qquad\;+ Cn^{-1}\sum_{j=2}^{\Nt -1} (j-1)^{1-\gamma}\\
&\le Cn^{-1} + Cn^{-1}\Nt^{2-\gamma}\end{align*}
which converges to 0 since $\gamma > 1$. 
\end{proof}

\section{Proof of Some Technical Lemmas}

\subsection{Proof of Lemma 3.3}
We may assume $t_1 = 0$.  For this proof we use Malliavin calculus to represent $\Delta X_{\frac jn}^5$ as a Skorohod integral.  Consider the Hermite polynomial identity $x^5 = H_5(x) + 10H_3(x) + 15 H_1(x)$.  Using the isometry $H_p(X(h)) = \delta^p(h^{\otimes p})$ (when $\|h\|_\hten =1$) we obtain for each $0 \le j \le \lfloor nt_2 \rfloor -1$,
\beq{dX5} \Delta X_{\frac jn}^5 = \delta^5(\partial_{\frac jn}^{\otimes 5}) + 10\| \Delta X_{\frac jn}\|_{L^2}^2\delta^3(\partial_{\frac jn}^{\otimes 3}) + 15\| \Delta X_{\frac jn}\|_{L^2}^4\delta(\partial_{\frac jn}).
\eeq
With this representation, we can expand
\[\sum_{j,k=0}^{\lfloor nt_2\rfloor -1} {\mathbb E}\left[ f^{(5)}(\hat{X}_{\frac jn})f^{(5)}(\hat{X}_{\frac kn}) \Delta X_{\frac jn}^5 \Delta X_{\frac kn}^5\right]\]
into 9 sums of the form
\beq{DsumX5} C\sum_{j,k=0}^{\lfloor nt_2\rfloor -1} \|\Delta X_{\frac jn}\|_{L^2}^{5-p}\|\Delta X_{\frac kn}\|_{L^2}^{5-q}{\mathbb E}\left[ f^{(5)}(\hat{X}_{\frac jn})f^{(5)}(\hat{X}_{\frac kn})
\delta^p(\partial_{\frac jn}^{\otimes p})
\delta^q(\partial_{\frac kn}^{\otimes q})\right]\eeq
where $p,q$ take values 1, 3, or 5.  By the integral multiplication formula \req{intmult}; 
and using the Malliavin duality \req{Duality}, each term of the form \req{DsumX5} can be further expanded into terms of the form
$$ C\sum_{j,k=0}^{\lfloor nt_2\rfloor -1} \left< \partial_{\frac jn}, \partial_{\frac kn}\right>^r_\hten \|\Delta X_{\frac jn}\|_{L^2}^{5-p}\|\Delta X_{\frac kn}\|_{L^2}^{5-q}{\mathbb E}\left[ f^{(5)}(\hat{X}_{\frac jn})f^{(5)}(\hat{X}_{\frac kn})
\delta^{p+q-2r}(\partial_{\frac jn}^{\otimes p-r}\otimes \partial_{\frac kn}^{\otimes q-r})\right]$$
$$=C\sum_{j,k=0}^{\lfloor nt_2\rfloor -1} \left< \partial_{\frac jn}, \partial_{\frac kn}\right>^r_\hten \|\Delta X_{\frac jn}\|_{L^2}^{5-p}\|\Delta X_{\frac kn}\|_{L^2}^{5-q}{\mathbb E}\left[ \left< D^{p+q-2r}\left(f^{(5)}(\hat{X}_{\frac jn})f^{(5)}(\hat{X}_{\frac kn})\right), \partial_{\frac jn}^{\otimes p-r}\otimes \partial_{\frac kn}^{\otimes q-r}\right>_{\hten^{\otimes p+q-2r}}\right]$$
where $0 \le r \le p\wedge q$ and $p,q \in \{ 1, 3, 5\}$.  For $0 \le m = p+q-2r \le 10$, we have 
\begin{align*}
D^m \left[f^{(5)}(\hat{X}_{\frac jn})f^{(5)}(\hat{X}_{\frac kn})\right] &= \sum_{a+b =m} D^a\left(f^{(5)}(\hat{X}_{\frac jn})\right)D^b\left(f^{(5)}(\hat{X}_{\frac kn})\right)\\
&=\sum_{a+b=m}f^{(5+a)}(\hat{X}_{\frac jn})f^{(5+b)}(\hat{X}_{\frac{j}{n}})\hat{\varepsilon}_{\frac jn}^{\otimes a} \otimes \hat{\varepsilon}_{\frac{k}{n}}^{\otimes b}.
\end{align*}
Hence, we expand \req{DsumX5} again into terms of the form:
\begin{align*}&C\sum_{j,k=0}^{\lfloor nt_2\rfloor -1} \left< \partial_{\frac jn}, \partial_{\frac kn}\right>^r_\hten \|\Delta X_{\frac jn}\|_{L^2}^{5-p}\|\Delta X_{\frac kn}\|_{L^2}^{5-q}{\mathbb E}\left[ f^{(5+a)}(\hat{X}_{\frac jn})f^{(5+b)}(\hat{X}_{\frac kn})\right]\\
&\quad\times\left<\hat{\varepsilon}_{\frac{j}{n}}^{\otimes a} \otimes \hat{\varepsilon}_{\frac{k}{n}}^{\otimes b}, \partial_{\frac jn}^{\otimes p-r} \otimes \partial_{\frac kn}^{\otimes q-r}\right>_{\hten^{\otimes a+b}},\end{align*}
where $a+b = p+q-2r$.  With this representation, we are now ready to develop estimates for each term.  By condition (0),
$$\left|{\mathbb E}\left[ f^{(5+a)}(\hat{X}_{\frac jn})f^{(5+b)}(\hat{X}_{\frac kn})\right]\right| \le \left( \sup_{0\le j < \lfloor nt_2\rfloor} {\mathbb E}\left[f^{(5+a)}(\hat{X}_{\frac jn})\right]\right)^{\frac 12}\left(\sup_{0\le k < \lfloor nt_2\rfloor} {\mathbb E}\left[f^{(5+b)}(\hat{X}_{\frac kn})\right]\right)^{\frac 12} \le C;$$
and by condition (i),
$$ \sup_{0\le j < \lfloor nt_2 \rfloor} \|\Delta X_{\frac jn}\|_{L^2}^{5-p}~\sup_{0\le k < \lfloor nt_2 \rfloor} \|\Delta X_{\frac kn}\|_{L^2}^{5-q} \le Cn^{-\frac{10-(p+q)}{6}}. $$
If $a \ge 1$ with $a+b=p+q-2r$, by condition (iv)
$$\left|\left<\hat{\varepsilon}_{\frac jn}^{\otimes a} \otimes \hat{\varepsilon}_{\frac kn}^{\otimes b}, \partial_{\frac jn}^{\otimes p-r} \otimes \partial_{\frac kn}^{\otimes q-r}\right>_{\hten^{\otimes a+b}}\right|\le Cn^{-(a+b-1)\lambda}\left|\left< \hat{\varepsilon}_{\frac jn}, \partial_{\frac jn}\right>_\hten\right|,$$
with a similar term in $k$ if $a =0$ and $b\ge 1$.  Hence, assuming $a \ge 1$, each term in the expansion of \req{DsumX5} has an upper bound of
$$C\sum_{j,k=0}^{\lfloor nt_2\rfloor -1} \left|\left< \partial_{\frac jn}, \partial_{\frac kn}\right>_\hten\right|^r  n^{-\frac{10-(p+q)}{6}}~ n^{-(a+b-1)\lambda}\left|\left< \hat{\varepsilon}_{\frac jn}, \partial_{\frac jn}\right>_\hten\right|.$$
To show each term has the desired upper bound, first assume $r \ge 1$.  Then $\left|\left< \hat{\varepsilon}_{\frac jn}, \partial_{\frac jn}\right>_\hten\right| \le Cn^{-\lambda},$ and by Lemma 3.2.d we have an upper bound of 
$$Cn^{-\frac{10-(p+q)}{6} - (a+b)\lambda}\sum_{j,k=0}^{\lfloor nt_2 \rfloor -1} \left| \left< \partial_{\frac jn}, \partial_{\frac kn}\right>_\hten\right|^r \le C\lfloor nt_2 \rfloor n^{-\frac 53 +\frac{p+q-2r}{6} -(a+b)\lambda} = C\lfloor nt_2 \rfloor n^{-\frac 53 -(a+b)(\lambda -\frac 16)} $$
which is less than or equal to $C\lfloor nt_2 \rfloor n^{-\frac 53}$ because $\lambda > 1/6$.  For cases with $r=0$, then either $a \ge 1$ or $b\ge 1$, so without loss of generality assume $a \ge 1.$  For this case with Lemma 3.2.c we have an upper bound of
$$C\sum_{j,k=0}^{\lfloor nt_2 \rfloor-1} n^{-\frac{10-(p+q)}{6} - (a+b-1)\lambda}\left|\left<\hat{\varepsilon}_{\frac jn} , \partial_{\frac jn}\right>_\hten\right|  \le C\lfloor nt_2 \rfloor n^{-\frac 53 -(a+b)(\lambda -\frac 16) + \lambda},$$
which is less than $C\lfloor nt_2 \rfloor n^{-\frac 43}$ since $\lambda \le 1/3$.

\subsection{Proof of Lemma 3.6}
Without loss of generality, assume $a=0$.  First we want to show that for each integer $0 \le k \le b-1$,
\beq{single} {\mathbb E}\left| \sum_{j=0}^k f^{(3)}(\hat{X}_{\frac jn}) \Delta X_{\frac jn} \right| \le C.\eeq
Using the Taylor expansion similar to Section 3.2, 
\begin{align*}
f''(X_{\frac{j+1}{n}}) - f''(X_{\frac jn}) &= \left(f''(X_{\frac{j+1}{n}}) - f''(\hat{X}_{\frac jn})\right) -\left(f''(X_{\frac{j}{n}}) - f''(\hat{X}_{\frac jn})\right) \\
&= f^{(3)}(\hat{X}_{\frac jn})\Delta X_{\frac jn} + \frac{1}{24}f^{(5)}(\Xhat) \Delta X_{\frac jn}^3 + \frac{1}{2^5 5!}f^{(7)}(\Xhat)\Delta X_{\frac jn}^5 + B_n^+(j) - B_n^-(j)
\end{align*}
where $B_n^+(j), B_n^-(j)$ have the form $Cf^{(9)}(\xi_j) \Delta X_{\frac jn}^7$.  Hence we can write,
\begin{align*}
{\mathbb E}\left| \sum_{j=0}^k f^{(3)}(\hat{X}_{\frac jn}) \Delta X_{\frac jn} \right| & \le {\mathbb E}\left| \sum_{j=0}^k \left(f''(X_{\frac{j+1}{n}}) - f''(X_{\frac jn})\right)\right| + \frac{1}{24} {\mathbb E}\left| \sum_{j=0}^k f^{(5)}(\Xhat)\Delta X_{\frac jn}^3\right| \\
&\quad + \frac{1}{2^5 5!}{\mathbb E}\left|\sum_{j=0}^k f^{(7)}(\Xhat)\Delta X_{\frac jn}^5\right| + {\mathbb E}\sum_{j=0}^k \left|B_n^+(j)\right| + \left|B_n^-(j)\right|.
\end{align*}
We have the following estimates:  By condition (0),
$${\mathbb E}\left| \sum_{j=0}^k \left(f''(X_{\frac{j+1}{n}}) - f''(X_{\frac jn})\right)\right| \le {\mathbb E}\left| f''(X_{\frac{k+1}{n}}) - f''(X_0)\right| \le C;$$
by Lemma 3.3,
$${\mathbb E}\left| \sum_{j=0}^k f^{(7)}(\Xhat)\Delta X_{\frac jn}^5\right| \le Cn^{-\frac 23}(k+1)^{\frac 12};$$
and by Lemma 3.4, 
$$\sum_{j=0}^k {\mathbb E}\left|B_n^+(j)\right| + {\mathbb E}\left|B_n^-(j)\right| \le Cn^{-\frac 73}(k+1)^2.$$
This leaves the $\Delta X^3$ term.  Using the Hermite polynomial identity $y^3 = H_3(y) + 3H_1(y)$, we can write  
\begin{align*}
{\mathbb E}\left| \sum_{j=0}^k f^{(5)}(\Xhat)\Delta X_{\frac jn}^3\right| \le {\mathbb E}\left| \sum_{j=0}^k f^{(5)}(\Xhat)\delta^3(\partial_{\frac jn}^{\otimes 3})\right| + {\mathbb E}\left| \sum_{j=0}^k \left\| \Delta X_{\frac jn}\right\|_{L^2}^2 ~f^{(5)}(\Xhat)\delta(\partial_{\frac jn})\right|.
\end{align*}
For the first term we have 
\begin{align*}&{\mathbb E}\left| \sum_{j=0}^k f^{(5)}(\Xhat)\delta^3(\partial_{\frac jn}^{\otimes 3})\right|^2\\
&\qquad = \sum_{j,\ell = 0}^k{\mathbb E}\left[ f^{(5)}(\Xhat)f^{(5)}(\hat{X}_{\frac{\ell}{n}}) \delta^3\left(\partial_{\frac jn}^{\otimes 3}\right)\delta^3\left(\partial_{\frac{\ell}{n}}^{\otimes 3}\right)\right]\\
&\qquad=\sum_{j,\ell = 0}^k \sum_{r=0}^3 r!{\binom r3}^2{\mathbb E}\left[ f^{(5)}(\Xhat)f^{(5)}(\hat{X}_{\frac{\ell}{n}}) \delta^{6-2r}\left(\partial_{\frac jn}^{\otimes 3-r}\otimes_r \partial_{\frac{\ell}{n}}^{\otimes 3-r}\right)\right]\\
&\qquad=\sum_{j,\ell = 0}^k \sum_{r=0}^3 r!{\binom r3}^2{\mathbb E}\left[ \left< D^{6-2r}\left[f^{(5)}(\Xhat)f^{(5)}(\hat{X}_{\frac{\ell}{n}})\right] ,\partial_{\frac jn}^{\otimes 3-r}\otimes_r \partial_{\frac{\ell}{n}}^{\otimes 3-r}\right>_{\hten^{\otimes 6-2r}}\right]\left< \partial_{\frac jn}, \partial_{\frac{\ell}{n}}\right>_\hten^r\\
&\qquad\le \sum_{j,\ell = 0}^k \sum_{r=0}^3 \sum_{\stackrel{a+b=}{6-2r}} \left|\left< \hat{\varepsilon}_{\frac jn}^{\otimes a} \otimes \hat{\varepsilon}_{\frac{\ell}{n}}^{\otimes b}, \partial_{\frac jn}^{\otimes 3-r} \otimes \partial_{\frac{\ell}{n}}^{\otimes 3-r}\right>_{\hten^{\otimes 6-2r}}\right|~\left|\left< \partial_{\frac jn}, \partial_{\frac{\ell}{n}}\right>_\hten\right|^r.
\end{align*}
For this sum, if $r=0$ we use Lemma 3.2.a and 3.2.b for each pair $(a,b)$ to obtain terms of the form
\begin{align*}\sum_{j,\ell = 0}^k  \left|\left< \hat{\varepsilon}_{\frac jn}^{\otimes a} \otimes \hat{\varepsilon}_{\frac{\ell}{n}}^{\otimes b}, \partial_{\frac jn}^{\otimes 3} \otimes \partial_{\frac{\ell}{n}}^{\otimes 3}\right>_{\hten^{\otimes 6}}\right|
&\le \sup_{j,\ell}\left|\left< \hat{\varepsilon}_{\frac jn}, \partial_{\frac{\ell}{n}}\right>_\hten\right|^3 \sup_{j}\left|\left< \hat{\varepsilon}_{\frac jn}, \partial_{\frac jn}\right>_\hten\right| \sum_{j,\ell =0}^k \left< \hat{\varepsilon}_{\frac jn}, \partial_{\frac{\ell}{n}}\right>_\hten^2 \\ 
 &\le Cn^{-1-3\lambda}(k+1),\end{align*}
 where we use the fact that $r=0$ implies $a \ge 3$ or $b\ge 3$.  If $r \ge 1$, we use Lemma 3.2.a and 3.2.d to obtain terms of the form
\[Cn^{-(6-2r)\lambda}\sum_{j,\ell=0}^k \left|\left< \partial_{\frac jn}, \partial_{\frac{\ell}{n}}\right>_\hten\right|^r \le Cn^{-(6-2r)\lambda -\frac r3}(k+1),\]
noting that $(6-2r)\lambda + \frac r3 > 1$.  

For the other term, we have by Lemma 3.2.b,
\begin{align*}
&{\mathbb E}\left( \sum_{j=0}^k \left\| \Delta X_{\frac jn}\right\|_{L^2}^2 ~f^{(5)}(\Xhat)\delta(\partial_{\frac jn})\right)^2\\ &\quad\le
\sup_{0\le j \le k}\left\| \Delta X_{\frac jn}\right\|_{L^2}^4 \sum_{j,\ell =0}^k \left|{\mathbb E}\left[ f^{(5)}(\Xhat)f^{(5)}(\hat{X}_{\frac{\ell}{n}})\left(\delta^2\left(\partial_{\frac jn}\otimes\partial_{\frac{\ell}{n}}\right) + \left< \partial_{\frac jn}, \partial_{\frac{\ell}{n}}\right>_\hten\right)\right]\right|\\
&\quad\le Cn^{-\frac 23} \sum_{j,\ell =0}^k \left|{\mathbb E}\left[\left< D^2\left[f^{(5)}(\Xhat)f^{(5)}(\hat{X}_{\frac{\ell}{n}})\right], \partial_{\frac jn}\otimes\partial_{\frac{\ell}{n}}\right>_{\hten^{\otimes 2}}\right]\right|+\left|{\mathbb E}\left[ f^{(5)}(\Xhat)f^{(5)}(\hat{X}_{\frac{\ell}{n}})\right]\left< \partial_{\frac jn}, \partial_{\frac{\ell}{n}}\right>_\hten\right|\\
&\quad\le Cn^{-\frac 23} \sum_{j,\ell =0}^k \,\sum_{a +b =2} \left|\left< \hat{\varepsilon}_{\frac jn}^{\otimes a} \otimes \hat{\varepsilon}_{\frac{\ell}{n}}^{\otimes b},\partial_{\frac jn}\otimes\partial_{\frac{\ell}{n}}\right>_{\hten^{\otimes 2}}\right|+Cn^{-\frac 23}\sum_{j,\ell=0}^k \left|\left< \partial_{\frac jn}, \partial_{\frac{\ell}{n}}\right>_\hten\right|\\
&\quad\le Cn^{-\frac 23} \left(\sum_{j =0}^k \left|\left< \hat{\varepsilon}_{\frac jn}, \partial_{\frac jn}\right>_\hten\right|\right)\left(\sum_{\ell=0}^k \left|\left<\hat{\varepsilon}_{\frac jn},\partial_{\frac{\ell}{n}}\right>_\hten\right|\right) +\left(\sum_{j =0}^k \left|\left< \hat{\varepsilon}_{\frac jn}, \partial_{\frac jn}\right>_\hten\right|\right)^2+ Cn^{-1}(k+1)\\
&\quad \le Cn^{-1}(k+1)^{1-\theta} + Cn^{-\frac 43}(k+1)^{2-2\theta} + Cn^{-1}(k+1) \le C,\end{align*}
where the estimates follow from Lemma 3.2.c and 3.2.d.  Hence, by Cauchy-Schwarz
$${\mathbb E}\left| \sum_{j=0}^k \left\| \Delta X_{\frac jn}\right\|_{L^2}^2 ~f^{(5)}(\Xhat)\delta(\partial_{\frac jn})\right|
\le C,$$
which proves \req{single}.  Now we define
$$G_n(j) = \sum_{k=0}^j f^{(3)}(\hat{X}_{\frac kn}) \Delta X_{\frac kn},$$
and by Abel's formula and condition (iii) we have
\begin{align*}
{\mathbb E}\left|\sum_{j=0}^{b-1} \left\| \Delta X_{\frac jn}\right\|_{L^2}^2 f^{(3)}(\Xhat)\Delta X_{\frac jn}\right| 
&\le \left\| \Delta X_{\frac bn}\right\|_{L^2}^2{\mathbb E}\left| G_n(b-1)\right| +  \sum_{j=0}^{b-1} {\mathbb E}\left|G_n(j)\right|\left(\left\| \Delta X_{\frac{j+1}{n}}\right\|_{L^2}^2 - \left\| \Delta X_{\frac jn}\right\|_{L^2}^2\right)\\
&\le Cn^{-\frac 13}+ Cn^{-\frac 13}\sum_{j=4}^{b-1}(j-1)^{-\nu}\\
&\le Cn^{-\frac 13}.\end{align*}

\subsection{Proof of Lemma 3.10}
\noindent{\em Proof of \req{D2F}}.  Let $a_j = \lfloor nt_{j-1} \rfloor$ and $b_j = \lfloor nt_j \rfloor$.  By Lemma 2.1.b,
\begin{align*} D^2 F_n^j &= \sum_{k= a_j}^{b_j -1} D^2\delta^3 \left( f^{(3)}(\hat{X}_{\frac kn})\partial_{\frac kn}^{\otimes 3}\right) \\
&= \sum_{k= a_j}^{b_j -1} \left\{\delta^3 \left( f^{(5)}(\hat{X}_{\frac kn})\partial_{\frac kn}^{\otimes 3}\right)\hat{\varepsilon}_{\frac kn}^{\otimes 2} + 6~\delta^2 \left( f^{(4)}(\hat{X}_{\frac kn})\partial_{\frac kn}^{\otimes 2}\right)\partial_{\frac kn}\otimes\hat{\varepsilon}_{\frac kn} + 6~\delta \left( f^{(3)}(\hat{X}_{\frac kn})\partial_{\frac kn}\right)\partial_{\frac kn}^{\otimes 2} \right\}
\end{align*}
and 
\beq{DFexp}
DF_n^k = \sum_{m = a_k}^{b_k -1} \left\{ \delta^3\left(f^{(4)}(\hat{X}_{\frac mn})\partial_{\frac mn}^{\otimes 3}\right)\hat{\varepsilon}_{\frac mn} + 3~\delta^2\left( f^{(3)}(\hat{X}_{\frac mn})\partial_{\frac mn}^{\otimes 2}\right)\partial_{\frac mn}\right\}.\eeq
With these two expansions, it follows that the expectation
$$ {\mathbb E}\left[ \left< u_n^i, D^2F_n^j \otimes DF_n^k \right>_{\hten^{\otimes 3}}^2\right]$$
consists of terms of the form
\[
\sum_{p,p' = a_i}^{b_i-1} ~\sum_{q,q' = a_j}^{b_j-1}~\sum_{m,m' = a_k}^{b_k-1} {\mathbb E}\left[ G(p,p')\delta^{r_1}\left( g_1(\hat{X}_{\frac qn})\partial_{\frac qn}^{\otimes r_1}\right)\delta^{r_2}\left( g_2(\hat{X}_{\frac{q'}{n}})\partial_{\frac{q'}{n}}^{\otimes r_2}\right)\delta^{r_3}\left( g_3(\hat{X}_{\frac mn})\partial_{\frac mn}^{\otimes r_3}\right)\delta^{r_4}\left( g_4(\hat{X}_{\frac{m'}{n}})\partial_{\frac{m'}{n}}^{\otimes r_4}\right)\right]\]
\[ \times \left< \hat{\varepsilon}_{\frac qn}, \partial_{\frac pn}\right>_\hten^{r_1-1} \left<\partial_{\frac qn}, \partial_{\frac pn}\right>_\hten^{3-r_1} \left<\hat{\varepsilon}_{\frac{q'}{n}}, \partial_{\frac{p'}{n}}\right>_\hten^{r_2-1}\left<\partial_{\frac{q'}{n}}, \partial_{\frac{p'}{n}}\right>_\hten^{3-r_2}\left< \hat{\varepsilon}_{\frac mn}, \partial_{\frac pn}\right>_\hten^{r_3-2}\] \beq{six_sum}\times \left<\partial_{\frac mn}, \partial_{\frac pn}\right>_\hten^{3-r_3} \left<\hat{\varepsilon}_{\frac{m'}{n}}, \partial_{\frac{p'}{n}}\right>_\hten^{r_4-2}\left<\partial_{\frac{m'}{n}}, \partial_{\frac{p'}{n}}\right>_\hten^{3-r_4}
\eeq
where $G(p,p') := f^{(3)}(\hat{X}_{\frac pn}) f^{(3)}(\hat{X}_{\frac{p'}{n}})$, $r_1, r_2$ take values 1,2 or 3; $r_3, r_4$ take values 2 or 3; and each $g_i$ represents the appropriate derivative of $f$.  Without loss of generality, we will assume that $u_n^i, D^2F_n^j$, and $DF_n^k$ are all defined over the interval $[0,t]$, and that all sums are over the set $\{ 0, \dots, \Nt-1\}$.  Let $R = r_1 + r_2 + r_3 + r_4$, and note that $6 \le R \le 12$.  It follows from Lemma 3.2.a, 3.2.c, and/or 3.2.d that
\begin{align*}&\sum_{p,p'=0}^{\Nt -1} \vert\left< \hat{\varepsilon}_{\frac qn}, \partial_{\frac pn}\right>_\hten^{r_1-1} \left<\partial_{\frac qn}, \partial_{\frac pn}\right>_\hten^{3-r_1} \left<\hat{\varepsilon}_{\frac{q'}{n}}, \partial_{\frac{p'}{n}}\right>_\hten^{r_2-1}\left<\partial_{\frac{q'}{n}}, \partial_{\frac{p'}{n}}\right>_\hten^{3-r_2}\left< \hat{\varepsilon}_{\frac mn}, \partial_{\frac pn}\right>_\hten^{r_3-2}\\
&\qquad \times \left<\partial_{\frac mn}, \partial_{\frac pn}\right>_\hten^{3-r_3} \left<\hat{\varepsilon}_{\frac{m'}{n}}, \partial_{\frac{p'}{n}}\right>_\hten^{r_4-2}\left<\partial_{\frac{m'}{n}}, \partial_{\frac{p'}{n}}\right>_\hten^{3-r_4} \vert\\
&\quad\le\sum_{p=0}^{\Nt -1}\left|\left< \hat{\varepsilon}_{\frac qn}, \partial_{\frac pn}\right>_\hten^{r_1-1} \left<\partial_{\frac qn}, \partial_{\frac pn}\right>_\hten^{3-r_1} \left< \hat{\varepsilon}_{\frac mn}, \partial_{\frac pn}\right>_\hten^{r_3-2}\left<\partial_{\frac mn}, \partial_{\frac pn}\right>_\hten^{3-r_3} \right|\\
&\qquad \times \sum_{p'=0}^{\Nt -1} \left| \left<\hat{\varepsilon}_{\frac{q'}{n}}, \partial_{\frac{p'}{n}}\right>_\hten^{r_2-1}\left<\partial_{\frac{q'}{n}}, \partial_{\frac{p'}{n}}\right>_\hten^{3-r_2}\left<\hat{\varepsilon}_{\frac{m'}{n}}, \partial_{\frac{p'}{n}}\right>_\hten^{r_4-2}\left<\partial_{\frac{m'}{n}}, \partial_{\frac{p'}{n}}\right>_\hten^{3-r_4}\right|\\
&\quad \le Cn^{-\Lambda},
\end{align*}
where the exponent $\Lambda$ is determined by $\{r_1, \dots, r_4\}$ as follows:  First, suppose $r_1=3$.  Then by Lemma 3.2.a and 3.2.c,
\begin{align*}
\sum_{p=0}^{\Nt -1}\left|\left< \hat{\varepsilon}_{\frac qn}, \partial_{\frac pn}\right>_\hten^{2}\left< \hat{\varepsilon}_{\frac mn}, \partial_{\frac pn}\right>_\hten^{r_3-2}\left<\partial_{\frac mn}, \partial_{\frac pn}\right>_\hten^{3-r_3} \right|
&\le \sup_{m,p}\left|\left< \hat{\varepsilon}_{\frac mn}, \partial_{\frac pn}\right>_\hten^{r_3-2}\left<\partial_{\frac mn}, \partial_{\frac pn}\right>_\hten^{3-r_3} \right|\sum_{p=0}^{\Nt -1}\left|\left< \hat{\varepsilon}_{\frac qn}, \partial_{\frac pn}\right>_\hten^{2}\right|\\
&\le Cn^{-2\lambda - (r_3-2)\lambda -\frac 13(3-r_3)}\\
&= Cn^{-(r_1 + r_3 -3)\lambda - \frac 13(6-r_1-r_3)}.\end{align*}
On the other hand, if $r_1 =1$ or 2 then by Lemma 3.2.a and 3.2.d,
\begin{align*}
&\sum_{p=0}^{\Nt -1}\left|\left< \hat{\varepsilon}_{\frac qn}, \partial_{\frac pn}\right>_\hten^{r_1-1} \left<\partial_{\frac qn}, \partial_{\frac pn}\right>_\hten^{3-r_1} \left< \hat{\varepsilon}_{\frac mn}, \partial_{\frac pn}\right>_\hten^{r_3-2}\left<\partial_{\frac mn}, \partial_{\frac pn}\right>_\hten^{3-r_3} \right|\\
&\qquad \le \sup_{q,p}\left|\left< \hat{\varepsilon}_{\frac qn}, \partial_{\frac pn}\right>_\hten^{r_1-1}\right| \sup_{m,p}\left|\left< \hat{\varepsilon}_{\frac mn}, \partial_{\frac pn}\right>_\hten^{r_3-2}\left<\partial_{\frac mn}, \partial_{\frac pn}\right>_\hten^{3-r_3} \right|\sum_{p=0}^{\Nt -1} \left| \left<\partial_{\frac qn}, \partial_{\frac pn}\right>_\hten^{3-r_1}\right|\\
&\qquad \le Cn^{-(r_1+r_3-3)\lambda -\frac 13(6-r_1-r_3)}.\end{align*}
Combining this with a similar computation for the sum over $p'$, we obtain
\[ \Lambda = \lambda(R-6) +\frac 13 (12-R) = 2-\left(\frac 13 - \lambda\right)(R-6).\] 
In particular, $\Lambda = 2$ if $R=6$ and $\Lambda =\frac 53+\lambda$ for $R=7$.  It follows that we want to find bounds for terms of the form
\beq{uD2D} Cn^{-\Lambda}\sup_{p,p'} \sum_{j_1, j_2, j_3, j_4} \left|{\mathbb E}\left[G(p,p')\prod_{i=1}^4 \delta^{r_i}\left(g_i(\hat{X}_{\frac{j_i}{n}})\partial_{\frac{j_i}{n}}^{\otimes r_i}\right)\right]\right|.\eeq
By repeated use of \req{intmult}, we can expand each product of the form
\[\prod_{i=1}^4 \delta^{r_i}\left(g_i(\hat{X}_{\frac{j_i}{n}})\partial_{\frac{j_i}{n}}^{\otimes r_i}\right)\]
into a sum of terms of the form
\[ C_M\delta^M\left(\Psi_n \partial_{\frac{j_1}{n}}^{\otimes b_1}\otimes \partial_{\frac{j_2}{n}}^{\otimes b_2}\otimes\partial_{\frac{j_3}{n}}^{\otimes b_3}\otimes\partial_{\frac{j_4}{n}}^{\otimes b_4}\right)\left< \partial_{\frac{j_1}{n}}, \partial_{\frac{j_2}{n}}\right>_\hten^{\alpha_1}\left< \partial_{\frac{j_1}{n}}, \partial_{\frac{j_3}{n}}\right>_\hten^{\alpha_2}\left< \partial_{\frac{j_1}{n}}, \partial_{\frac{j_4}{n}}\right>_\hten^{\alpha_3}\]
\[\times \left< \partial_{\frac{j_2}{n}}, \partial_{\frac{j_3}{n}}\right>_\hten^{\alpha_4}\left< \partial_{\frac{j_2}{n}}, \partial_{\frac{j_4}{n}}\right>_\hten^{\alpha_5}\left< \partial_{\frac{j_3}{n}}, \partial_{\frac{j_4}{n}}\right>_\hten^{\alpha_6},\]
where $C_M$ is a combinatorial constant from \req{intmult}, $\Psi_n = \prod_{i=1}^4 g_i(\hat{X}_{\frac{j_i}{n}})$; each $\alpha_i \in \{0,1,2\}$, such that $A := \sum_{i=1}^6 \alpha_i \le R/2$; each nonnegative integer $b_i$ satisfies $b_i \le r_i$; and the exponent $M$ satisfies:
\[M = b_1+b_2+b_3+b_4  = R-2A.\]
With this representation, and using the Malliavin duality \req{Duality}, we want to bound terms of the form
\beq{uD2D_2} Cn^{-\Lambda} \sup_{p,p'} \sum_{j_1, j_2,j_3,j_4} \left|{\mathbb E}\left[\left< D^MG(p,p'), \Psi_n \partial_{\frac{j_1}{n}}^{\otimes b_1}\otimes \cdots\otimes\partial_{\frac{j_4}{n}}^{\otimes b_4}\right>_{\hten^{\otimes M}}\right]\left< \partial_{\frac{j_1}{n}}, \partial_{\frac{j_2}{n}}\right>_\hten^{\alpha_1}\cdots\left< \partial_{\frac{j_3}{n}}, \partial_{\frac{j_4}{n}}\right>_\hten^{\alpha_6}\right|.\eeq

Consider first the case $A=0$.  Then $M=R \ge 6$, and each $b_i = r_i \ge 1$.  Hence
\begin{align*}
&Cn^{-\Lambda} \sup_{p,p'} \sum_{j_1, j_2,j_3,j_4} \left|{\mathbb E}\left[\left< D^MG(p,p'), \Psi_n \partial_{\frac{j_1}{n}}^{\otimes b_1}\otimes \cdots\otimes\partial_{\frac{j_4}{n}}^{\otimes b_4}\right>_{\hten^{\otimes M}}\right]\right|\\
&\qquad \le Cn^{-\Lambda} \sup_{p,j}\left|\left<\hat{\varepsilon}_{\frac pn}, \partial_{\frac jn}\right>_\hten\right|^{R-4}\left(\sup_p \sum_{j=0}^{\Nt -1} \left|\left<\hat{\varepsilon}_{\frac pn}, \partial_{\frac jn}\right>_\hten\right|\right)^4
\end{align*}
By Lemma 3.2.a and 3.2.c, this is bounded by $Cn^{-1-2\lambda}$, since $\Lambda \ge 1$ for all $R$ and $R\ge 6$.   

If $A\ge 1$, by permutation of indices we may assume that $\alpha_1 \ge 1$, so \req{uD2D_2} may be bounded using Lemma 3.2.c and 3.2.d:
\begin{align*}
&Cn^{-\Lambda} \sup_{p,p'} \sup_{j_1,j_2} \sum_{j_3, j_4=0}^{\Nt -1} \left|{\mathbb E}\left[\left< D^MG(p,p'), \Psi_n \partial_{\frac{j_1}{n}}^{\otimes b_1}\otimes \cdots\otimes\partial_{\frac{j_4}{n}}^{\otimes b_4}\right>_{\hten^{\otimes M}}\right]\right|\\
&\qquad\quad\times \sup_{j,k}\left| \left<\partial_{\frac jn},\partial_{\frac kn}\right>_\hten \right|^{A-1}\sum_{j_1,j_2=0}^{\Nt -1} \left| \left< \partial_{\frac{j_1}{n}}, \partial_{\frac{j_2}{n}}\right>_\hten\right|\\
&\quad \le C\Nt^3 n^{-\Lambda -\frac 13}\sup_{p,j}\left|\left<\hat{\varepsilon}_{\frac pn},\partial_{\frac jn}\right>_\hten\right|^M \sup_{j,k}\left| \left<\partial_{\frac jn},\partial_{\frac kn}\right>_\hten \right|^{A-1}\\
&\quad \le \Nt^3 n^{-\Theta},\end{align*}
where
\begin{align*}
\Theta &= 4+(R-6+M)\lambda - \frac{R-A}{3} = 4+(R-A)(2\lambda -\frac 13) - 6\lambda.\end{align*}
Since $A \le R/2$, $R\ge 6$, and $\lambda > 1/6$, we have $\Theta > 3$ for all cases except when $R=6$, $A=3$. This case has the form,
\begin{align*}
&Cn^{-2}\sup_{p,p'} \sum_{j_1,j_2,j_3,j_4} \left|{\mathbb E}\left[ G(p,p')\Psi_n\right]\left< \partial_{\frac{j_1}{n}},\partial_{\frac{j_2}{n}}\right>_\hten^{\alpha_1}\cdots\left< \partial_{\frac{j_3}{n}},\partial_{\frac{j_4}{n}}\right>_\hten^{\alpha_6}\right|\\
&\qquad \le Cn^{-2} \sup_{j_1,j_2} \sum_{j_3,j_4}\left|\left< \partial_{\frac{j_1}{n}},\partial_{\frac{j_2}{n}}\right>_\hten^{\alpha_1-1}\left<\partial_{\frac{j_1}{n}},\partial_{\frac{j_3}{n}}\right>_\hten^{\alpha_2}\cdots\left< \partial_{\frac{j_3}{n}},\partial_{\frac{j_4}{n}}\right>_\hten^{\alpha_6}\right| 
\sum_{j_1,j_2=0}^{\Nt -1} \left| \left< \partial_{\frac{j_1}{n}}, \partial_{\frac{j_2}{n}}\right>_\hten\right|\\
&\qquad \le C\Nt n^{-\frac 73}\left[ \left(\sup_k \sum_{j=0}^{\Nt -1} \left|\left< \partial_{\frac jn},\partial_{\frac kn}\right>_\hten\right|\right)^2 + \sup_{j,k}\left|\left< \partial_{\frac jn},\partial_{\frac kn}\right>_\hten\right|\sum_{j_3, j_4=0}^{\Nt -1} \left| \left<\partial_{\frac{j_3}{n}},\partial_{\frac{j_4}{n}}\right>_\hten\right|\right]\\
&\qquad \le C\Nt^2 n^{-3},
\end{align*}
by Lemma 3.2.d.

\medskip
\noindent{\em Proof of \req{DFDFDF}}.  For this term, we see that
$$ {\mathbb E}\left[ \left< u_n^i, DF_n^j \otimes DF_n^k \otimes DF_n^\ell\right>_{\hten^{\otimes 3}}^2\right]$$
consists of terms with the form
\[
\sum_{p,p' = a_i}^{b_i-1} ~\sum_{j_1,j_2 = a_j}^{b_j-1}~\sum_{j_3,j_4 = a_k}^{b_k-1}~\sum_{j_5,j_6=a_\ell}^{b_\ell -1} {\mathbb E}\left[ G(p,p')\delta^{r_1}\left( g_1(\hat{X}_{\frac{j_1}{n}})\partial_{\frac{j_1}{n}}^{\otimes r_1}\right)\cdots\delta^{r_6}\left( g_6(\hat{X}_{\frac{j_6}{n}})\partial_{\frac{m'}{n}}^{\otimes r_6}\right)\right]\]
\[ \times \left< \hat{\varepsilon}_{\frac{j_1}{n}}, \partial_{\frac pn}\right>_\hten^{r_1-2} \left<\partial_{\frac{j_1}{n}}, \partial_{\frac pn}\right>_\hten^{3-r_1} \cdots\left< \hat{\varepsilon}_{\frac{j_6}{n}}, \partial_{\frac pn}\right>_\hten^{r_6-2}\left< \partial_{\frac{j_6}{n}}, \partial_{\frac pn}\right>_\hten^{3-r_6}.\]
where each $r_i \in\{ 2,3\}$ and $G(p,p'), g_i(x)$ are as defined above.  As with \req{D2F} above, we assume that all components are defined over the time interval $[0,t]$ for some $t \le T$.  As above, let $R = \sum_{i=1}^6 r_i$, and note that for this case $12 \le R \le 18$.  Similar to the above case, we obtain
\[ \sum_{p,p'} \left| \left< \hat{\varepsilon}_{\frac{j_1}{n}}, \partial_{\frac pn}\right>_\hten^{r_1-2} \left<\partial_{\frac{j_1}{n}}, \partial_{\frac pn}\right>_\hten^{3-r_1} \cdots\left< \hat{\varepsilon}_{\frac{j_6}{n}}, \partial_{\frac pn}\right>_\hten^{r_6-2}\left< \partial_{\frac{j_6}{n}}, \partial_{\frac pn}\right>_\hten^{3-r_6}\right| \le Cn^{-\Lambda'},\]
where $\Lambda' = 2 - (\frac 13 -\lambda)(R-12)$.  It follows that, similar to \req{uD2D}, we want to obtain bounds for terms of the form
\[ Cn^{-\Lambda'} \sup_{p,p'} \sum_{j_1, \dots, j_6=0}^{\Nt-1} \left| {\mathbb E}\left[ G(p,p')\prod_{i=1}^6 \delta^{r_i}\left(g_i(\hat{X}_{\frac{j_i}{n}})\partial_{\frac{j_i}{n}}^{\otimes r_i}\right)\right]\right|.\]
Using \req{intmult} and the Malliavin duality as before, we obtain terms of the form
\beq{uDDD} Cn^{-\Lambda} \sup_{p,p'} \sum_{j_1, \dots, j_6=0}^{\Nt-1} \left| {\mathbb E}\left[\left< D^MG(p,p), \delta^M\left(\tilde{\Psi}_n \partial_{\frac{j_1}{n}}^{\otimes b_1}\otimes \cdots\otimes\partial_{\frac{j_6}{n}}^{\otimes b_6}\right)\right>_{\hten^{\otimes M}}\right]\right| \prod_{\{j_1, \dots, j_6\}}\left| \left< \partial_{\frac{j_\ell}{n}}, \partial_{\frac{j_m}{n}}\right>_\hten^{\alpha_i}\right|, \eeq
where
$\tilde{\Psi}_n = \prod_{i=1}^6 g_i(\hat{X}_{\frac{j_i}{n}})$, each $\alpha_i$ and each $b_i$ take values from $\{ 0, 1, 2, 3\}$; and the product includes all 15 possible pairs from the set $\{j_1, \dots, j_6\}$ such that $A:=\sum_{i=1}^{15} \alpha_i \le R/2$.  As in the above case, for each $R$ we have $M$ and $A$ satisfying
$M = \sum_{i=1}^6 b_i \;\text{ and }\; M = R-2A$.

In the product
\beq{DDDprod}\left< D^MG(p,p), \delta^M\left(\tilde{\Psi}_n \partial_{\frac{j_1}{n}}^{\otimes b_1}\otimes \cdots\otimes\partial_{\frac{j_6}{n}}^{\otimes b_6}\right)\right>_{\hten^{\otimes M}} \prod_{\{i_1, \dots, i_6\}} \left< \partial_{\frac{j_\ell}{n}}, \partial_{\frac{j_m}{n}}\right>_\hten^{\alpha_i}\eeq
each of the indices $\{ j_1, \dots, j_6\}$ must appear at least once.  Note that by Lemma 3.2.a we have (possibly up to a fixed constant)
\[\sup_{0\le j,k\le \Nt}\left| \left<\partial_{\frac jn},\partial_{\frac kn}\right>_\hten\right| \le \sup_{0\le j,p\le \Nt}\left|\left< \hat{\varepsilon}_{\frac pn}, \partial_{\frac jn}\right>_\hten\right|,\]
and by Lemma 3.2.c and 3.2.d we have
\[\sup_{0\le k \le \Nt} \sum_{j=0}^{\Nt -1} \left|\left<\partial_{\frac jn},\partial_{\frac kn}\right>_\hten\right | \le \sup_{0\le p \le \Nt} \sum_{j=0}^{\Nt -1} \left|\left< \hat{\varepsilon}_{\frac pn}, \partial_{\frac jn}\right>_\hten\right|.\]
Hence, we may conclude that \req{DDDprod} contains terms less than or equal to
\[ \left<\hat{\varepsilon}_{\frac pn}, \partial_{\frac{j_1}{n}}\right>_\hten\left<\hat{\varepsilon}_{\frac pn}, \partial_{\frac{j_3}{n}}\right>_\hten\left<\hat{\varepsilon}_{\frac pn}, \partial_{\frac{j_3}{n}}\right>_\hten,\]
and, by Lemma 3.2.c, \req{uDDD} is bounded in absolute value by
\begin{align*}
&Cn^{-\Lambda'} \sum_{j_4,j_5 ,j_6=0}^{\Nt -1} \sup_{p,j}\left|\left< \hat{\varepsilon}_{\frac pn}, \partial_{\frac jn}\right>_\hten\right|^{M-3}\sup_{j,k}\left| \left<\partial_{\frac jn},\partial_{\frac kn}\right>_\hten\right|^A\left( \sup_p \sum_{j=0}^{\Nt -1} \left|\left< \hat{\varepsilon}_{\frac pn}, \partial_{\frac jn}\right>_\hten\right|\right)^3\\
&\qquad\; \le C\Nt^3 n^{-\Theta'},\end{align*}
where, using the fact that $R=M+2A$,
\[\Theta' = 2-(\frac 13 -\lambda)(R-12) + (M-3)\lambda + \frac A3 = 6 + (R-A)(2\lambda -\frac 13) - 15\lambda.\]
Observe that $\Theta' > 3$ whenever $R-A > 6$.  The case $R-A = 6$ occurs only when $R=12$, $A=6$, and $M=0$; so in this case we have an upper bound of 
\[Cn^{-2} \sum_{j_4, j_5, j_6=0}^{\Nt -1}  \sup_{j,k}\left| \left<\partial_{\frac jn},\partial_{\frac kn}\right>_\hten\right|^3\left(
\sup_k \sum_{j=0}^{\Nt -1} \left| \left<\partial_{\frac jn},\partial_{\frac kn}\right>_\hten\right|\right)^3\]
\[ \le C\Nt^3 n^{-2-\frac 63} \le Cn^{-1}.\]

\medskip
\noindent{\bf Acknowledgement}

The authors wish to thank two anonymous referees for a careful reading and valuable comments.

\end{document}